\theoremstyle{fancy}
\newtheorem{thm}{Theorem}[section]
\newtheorem{lemma}[thm]{Lemma}
\newtheorem{prop}[thm]{Proposition}
\newtheorem{definition}{Definition}[section]
\newtheorem{que}{Question}[section]
\newtheorem{rmk}{Remark}[section]
\begin{document}

\title[Asymptotic dimensions of cohomology groups]{Asymptotic estimate of
cohomology groups valued in pseudo-effective line bundles}
\author{Zhiwei Wang}

\address{ School
of Mathematical Sciences\\Laboratory of Mathematics and Complex Systems\\Beijing Normal University
\\ Beijing 100875\\ P. R. China}
\email{zhiwei@bnu.edu.cn}

\author{Xiangyu Zhou}
\address{Institute of Mathematics\\Academy of Mathematics and Systems Sciences,
and Hua Loo-Keng Key Laboratory of Mathematics
\\Chinese Academy of Sciences\\Beijing 100190\\P. R. China}
\email{xyzhou@math.ac.cn}

\begin{abstract}In this paper, we study questions of Demailly and Matsumura on  the
asymptotic behavior of dimensions of cohomology groups for high tensor powers of
(nef) pseudo-effective line bundles over non-necessarily projective algebraic manifolds. By generalizing Siu's 
$\partial\overline{\partial}$-formula and Berndtsson's eigenvalue
estimate of $\overline{\partial}$-Laplacian and combining
Bonavero's technique, we obtain the following result: given a
holomorphic pseudo-effective line bundle $(L, h_L)$ on a compact
Hermitian manifold $(X,\omega)$, if $h_L$ is a singular metric with
algebraic singularities, then $\dim H^{q}(X,L^k\otimes E\otimes
\mathcal{I}(h_L^{k}))\leq Ck^{n-q}$ for $k$ large, with $E$ an
arbitrary holomorphic vector bundle. As applications, we obtain 
partial solutions to the questions of Demailly and Matsumura.
\end{abstract}
\date{November 28, 2017}
\subjclass[2010]{32J25, 32J27, 14C17, 14C20} \keywords{Nef line
bundle, pseudo-effective line bundle, singular metric, multiplier
ideal sheaf}
\thanks{The first author was partially supported by the Fundamental Research Funds for the Central Universities and by the NSFC grant NSFC-11701031, the second author was partially supported by the National Natural Science Foundation of China.}

\maketitle


\section{Introduction}

Numerical properties of cohomology groups valued in bundles play an important role to
approach certain fundamental problems of complex algebraic geometry
and complex analytic geometry. The concept of positivity is often
involved in the study. If some "strong positivity"  is satisfied,
one can derive  precise or asymptotic vanishing theorems of the
cohomology groups,  which can be  used to study, say embedding
problems, asymptotics of linear systems, extension problems of
holomorphic sections, the minimal model program, for listing just a few
(cf. \cite{AN54,
AG62,Bouche95,Cao13,Dem15,En93,GZ15,Laz041,Laz042,Mat14,Mat15,Siu82,SS85,Zhou13}).\\

When only some "weaker positivity" can be assumed, some
precise or asymptotic estimate of the cohomology groups are also
expected, which is again used to study the algebraic and analytic geometric consequences about the manifolds. For instance, in this aspect, one has the  Grauert-Riemenschneider conjecture (G-R
conjecture for short) and the abundance conjecture. 

The G-R
conjecture says that given a hermitian holomorphic line bundle over a
compact Hermitian manifolds, if the curvature form of the line bundle
is semi-positive and positive on an open dense subset, then the base
manifold is Moishezon, i.e. birational to a projective
manifold. 

Siu \cite{Siu84} solved the G-R conjecture by giving an
asymptotic estimate of the Dolbeault cohomology group. Shortly
later, Demailly \cite{Dem85, Dem89} gave another solution to the G-R
conjecture by establishing the celebrated holomorphic Morse
inequalities (giving asymptotic bounds on the cohomology of  tensor
bundles of holomorphic line bundles), which is an important development of
 the Riemann-Roch formula, and is used to study the
Green-Griffiths-Lang conjecture by Demailly in \cite{Dem11} recently. Bonavero
\cite{Bon98} considered the singular case and founded  singular
holomorphic Morse inequalities for line bundles admitting a singular
metric with algebraic singularities, which was used to establish G-R
type criterions by volumes of pseudo-effective line bundles  by
Boucksom and  Popovici \cite{Bou02, Pop08}. Berndtsson \cite{Ber02}
obtained an asymptotic eigenvalue estimate of
$\overline{\partial}$-Laplace which also implies the G-R conjecture.

The abundance conjecture \cite{CHP16,HP17,LOP16,LP161,LP162,Siu11}
asserts that $\kappa(X)=\nu(X)$ which is still an open question in algebraic geometry, where 
$\kappa(X)$ is the Kodaira dimension of the canonical line
bundle $K_X$ on the projective manifold $X$ and $\nu(X)$ is the numerical dimension of
$K_X$.\\

Let us recall some positivity concepts for holomorphic line bundles. Let $(X,\omega)$ be
a compact Hermitian manifold of complex dimension $n$, $L\rightarrow X$ be a holomorphic line bundle over $X$.

\begin{itemize}
\item  $L$ is said to be semi-positive (positive), if there is a smooth Hermitian metric $h$ of $L$,
such that the curvature $i\Theta_h\geq 0$ ($> \varepsilon\omega$ for some $\varepsilon>0$).
\item  $L$ is said to be pseudo-effective (big), if there is a singular Hermitian metric $h$ of $L$,
such that the curvature current $i\Theta_h\geq 0$ ($> \varepsilon\omega$ for some $\varepsilon>0$)
in the sense of currents.
\item $L$ is said to be nef (numerically effective or numerically eventually free),
if for any $\varepsilon>0$, there is a smooth Hermitian metric $h$ of $L$, such that
the curvature $i\Theta_h\geq -\varepsilon\omega$.
\end{itemize}

An Hermitian metric $h$ of $L$ is said to be singular, if locally
we can write $h=e^{-2\varphi}$, with $\varphi\in L^1_{loc}$. The
multiplier ideal sheaf $\mathcal{I}(\varphi)$ is the ideal subsheaf
of the germs of holomorphic functions $f\in \mathcal{O}_{X,x}$ such
that $|f|^2e^{-2\varphi}$ is integrable with respect to the Lebesgue
measure in local coordinates near $x$. 

Let $h_L=e^{-2\varphi}$ be a
singular Hermitian metric on $L$ where $\varphi\in
L^1_{loc}(X,\mathbb{R})$.  The  multiplier ideal sheaf of $h_L$ is
defined by $\mathcal{I}(h_L)=\mathcal{I}(\varphi)$, which  is well-known to be coherent when $\varphi$ is locally a plurisubharmonic function up to a bounded function.\\

In this paper, we are going to study the following two questions by Demailly and Matsumura on the
asymptotic estimate of dimensions of cohomology groups valued in high tensor powers of
(nef) pseudo-effective line bundles over a compact Hermitian manifold which is not necessarily a projective algebraic manifold.

\begin{que}
[Demailly's question \cite{Dem10}]\label{Demailly question}For a holomorphic nef line bundle $L$
and a holomorphic vector bundle $E$ over a compact Hermitian $n$-fold $X$, does the following estimate holds:
\begin{align}\label{estimate 1}
\dim_{\mathbb{C}} H^{q}(X,L^k\otimes E)\sim O(k^{n-q})~~~~~~?
\end{align}
\end{que}
\begin{que}[Matsumura's question \cite{Mat14, Mat15-1}]\label{WZ que}
 \label{Mat vanish}Let $L$ be a line bundle on a compact Hermitian manifold  with a singular metric $h$
 whose curvature is (semi)-positive. Then, for any holomorphic vector bundle $M$ on $X$ and any $q\geq 0$, one asks if the following estimate holds:
\begin{align}\label{Matsumura est}
\dim_{\mathbb{C}} H^q(X,\mathscr{O}_X(M\otimes L^k)\otimes \mathcal{I}(h^k))\sim O(k^{n-q})~~~~~?
\end{align}
\end{que}

In Demailly's book \cite{Dem10}, the same estimate as in Question \ref{Demailly question} was proved
under the assumption that $X$ is projective algebraic. The proof relies on the projective algebraic condition of $X$,
i.e. the existence of an ample line bundle on $X$. However the existence of the ample line bundle is not guaranteed
for general compact Hermitian (even K\"{a}hler) manifold. Demailly wrote in his book that " Observe that the argument
does not work any more if $X$ is not algebraic. It seems to be unknown whether the $O(k^{n-q})$ bound
still holds in that case". We summarize the question as above  Question \ref{Demailly question}.
Note that the estimate of Question \ref{Demailly question} in the projective case was used to
study abundance conjecture (e.g. \cite{LOP16}).\\

In \cite{Mat14}, Matsumura gave a positive answer to   Question
\ref{WZ que} when $X$ is projective, and the existence of an  ample
line bundle on $X$ is essentially needed in the proof. This type of
estimate was used to prove Nadel type vanishing theorem via
injectivity theorems, and thus called asymptotic cohomology
vanishing theorems for high tensor powers of line bundles with
singular metrics. Furthermore, Matsumura \cite{Mat14} wrote that if
one can give a positive answer to Question \ref{WZ que} under the
condition that $X$ is a compact K\"{a}hler manifold, then the
corresponding vanishing theorems can be generalized to the
K\"{a}hler case. Question 1.2 for K\"{a}hler manifolds is also mentioned in Problem 3.4 in \cite{Mat15-1}. Here we summarize  Question \ref{WZ que} for Hermitian manifolds in its full generality.\\

Note that Berndtsson's  estimate answers both Question \ref{Demailly question} and Question \ref{WZ que}
under an extra assumption that $L$ is semi-positive, i.e. $L$ admits a smooth Hermitian metric
with semi-positive curvature.  

For the sake of convenience, we state
Berndtsson's result as follows: let $(X,\omega)$ be a compact Hermitian manifold with
Hermitian metric $\omega$, $E$ and $L$  holomorphic line bundles over $X$. Assume that $L$ is given
a metric of semipositive curvature. Take $q\geq 1$. Then if $0\leq \lambda\leq k$,
\begin{align*}
h^{n,q}_{\leq \lambda}(X, L^k\otimes E)\leq C(\lambda+1)^qk^{n-q}.
\end{align*}
If $1\leq k\leq \lambda$, then
\begin{align*}
h^{n,q}_{\leq \lambda}(X,L^k\otimes E)\leq C\lambda^n,
\end{align*}
where $h^{n,q}_{\leq \lambda}(X, L^k\otimes E)$ is the dimension of the linear span of
$\overline{\partial}$-closed $L^k\otimes E$-valued $(n,q)$-eigen-forms of the
$\overline{\partial}$-Laplacian $\Box$ with eigenvalue less than or equal to $\lambda$. In particular,
if $\lambda=0$, $h^{n,q}_{\leq 0}(X,L^k\otimes E)$ is just $\dim_{\mathbb{C}} H^q(X,K_X\otimes E\otimes L^k)$.
The proof of this result is a clever combination of  localization technique,
Siu's $\partial\overline{\partial}$-formula \cite{Siu82} and a result of Skoda \cite{Sko82}.\\

To apply Berndtsson's technique in our case, we find some difficulties which we can not overcome by direct use of the technique.
On one hand, one can not expect  a smooth semi-positive representative  in a nef  class, we need to
compensate the loss of arbitrarily small positivity. On the other hand, for a pseudo-effective line bundle,
the singularities of the singular metric can be very complicated.

Fortunately, by generalizing and combining the techniques of Siu, Berndtsson and Bonavero,  we can make some progress under the assumption that $L$ admits  a singular metric
with algebraic singularities.  Note that this type of assumption is often used to study
some important problems in algebraic geometry \cite{LP161,LP162}. \\

Now let us introduce the main results in this paper.

Firstly, we compute the  vector bundle version of the so called Siu's $\partial\overline{\partial}$-formula \cite{Siu82} in order 
to meet our needs. Given an $E\otimes L$-valued $(n,q)$-form $u$,
we define an associated $(n-q,n-q)$-form $T_u$. Running a similar procedure like in \cite{Ber02}, we get the following

\begin{prop}\label{ddbar formula 1}Let $(X,\omega)$ be a compact Hermitian manifold with Hermitian metric
$\omega$, $E$ and $L$ be holomorphic vector bundle of rank $r$ and holomorphic line bundle respectively.
Let $u$ be an $E\otimes L$-valued $(n,q)$-form. If $u$ is $\overline{\partial}$-close, the following inequality holds
\begin{align*}
i\partial\overline{\partial}(T_u\wedge\omega_{q-1})\geq(-2Re\langle\Box u,u\rangle+\langle\Theta_{E\otimes L}
\wedge\Lambda u,u\rangle-c|u|^2)\omega_n.
\end{align*}
The constant $c$ is equal to zero if $\overline{\partial}\omega_{q-1}=\overline{\partial}\omega_q=0$,
hence in particular if $\omega$ is K\"{a}hler.
\end{prop}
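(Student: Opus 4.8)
The inequality is pointwise in nature, so the plan is to fix $x_{0}\in X$, pass to a coordinate patch, and reduce the statement to an algebraic identity between $(n,n)$-forms at $x_{0}$. First I would make the construction of $T_{u}$ explicit: in local holomorphic coordinates $z=(z_{1},\dots ,z_{n})$ and a local holomorphic frame $(e_{\alpha})_{1\le \alpha\le r}$ of $E\otimes L$, writing $u=\sum_{\alpha}\sum_{|J|=q}u_{\alpha,J}\,dz\wedge d\bar z_{J}\otimes e_{\alpha}$ with $dz=dz_{1}\wedge\cdots\wedge dz_{n}$, one sets
\[
T_{u}=c_{n,q}\sum_{\alpha,\beta}h_{\alpha\bar\beta}\sum_{|J|=|K|=q}\varepsilon^{J,K}\,u_{\alpha,J}\,\overline{u_{\beta,K}}\;dz_{J^{c}}\wedge\overline{dz_{K^{c}}},
\]
where $(h_{\alpha\bar\beta})$ is the matrix of $h_{E}\otimes h_{L}$ in this frame, $dz_{J^{c}}=\bigwedge_{i\notin J}dz_{i}$ in increasing order, and the combinatorial constant $c_{n,q}$ and the signs $\varepsilon^{J,K}$ are chosen so that $T_{u}$ is a globally defined scalar $(n-q,n-q)$-form with $T_{u}\wedge\omega_{q}=|u|^{2}\omega_{n}$ at any point where $\omega=i\sum_{j}dz_{j}\wedge d\bar z_{j}$; this is the vector-bundle analogue of the form used by Siu \cite{Siu82} and Berndtsson \cite{Ber02}. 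I would also record the companion identity expressing $T_{u}\wedge\omega_{q-1}$ as the pairing of $\Lambda u$ against $u$ up to a positive constant, which is what makes $\langle\Theta_{E\otimes L}\wedge\Lambda u,u\rangle$ emerge later.

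Next I would differentiate: compute $i\partial\overline\partial(T_{u}\wedge\omega_{q-1})$ at $x_{0}$ with the Leibniz rule, after choosing coordinates adapted to $\omega$ to first order at $x_{0}$ and a normal holomorphic frame for $E\otimes L$ (so that $(h_{\alpha\bar\beta})=\mathrm{Id}+O(|z|^{2})$ and the Chern connection form vanishes at $x_{0}$). The terms produced organize into three families. (i) Terms in which $\partial$ and $\overline\partial$ hit the coefficients $u_{\alpha,J},\overline{u_{\beta,K}}$: using the hypothesis $\overline\partial u=0$ and the Hermitian commutation relations between $\overline\partial$, $\overline\partial^{*}$ and the Lefschetz operators $\omega\wedge(\,\cdot\,)$ and $\Lambda$, these reassemble into $-2\,\mathrm{Re}\langle\Box u,u\rangle\,\omega_{n}$ together with a manifestly non-negative term built out of first-order Chern derivatives of $u$ --- consistently, over a compact K\"ahler $X$ that term integrates to $\|\overline\partial^{*}u\|^{2}+\|D^{\prime *}u\|^{2}$, so that Stokes' theorem applied to the left-hand side reproduces the Bochner--Kodaira--Nakano identity --- and this non-negative term I would simply discard. (ii) The single term in which the operator $\partial\overline\partial$ falls on the metric matrix $(h_{\alpha\bar\beta})$: since its second-order behavior at $x_{0}$ is governed by the Chern curvature of $E\otimes L$, this term equals $\langle\Theta_{E\otimes L}\wedge\Lambda u,u\rangle\,\omega_{n}$ in the curvature convention of the statement. (iii) Terms carrying a factor $\partial\omega_{q-1}$, $\overline\partial\omega_{q-1}$, $\partial\omega_{q}$ or $\overline\partial\omega_{q}$: each is pointwise dominated by $C(\omega)|u|^{2}$, hence contributes at least $-c|u|^{2}\omega_{n}$, with $c$ depending only on the torsion of $\omega$ and vanishing exactly when $\overline\partial\omega_{q-1}=\overline\partial\omega_{q}=0$, in particular when $\omega$ is K\"ahler. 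Summing (i)--(iii) and dropping the non-negative contribution of (i) yields the claimed inequality.

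The main obstacle --- and the only point where the statement goes beyond \cite{Ber02} --- is family (i) in the vector-bundle setting. For a line bundle the scalar weight $e^{-\varphi}$ commutes with everything and the passage to $\langle\Box u,u\rangle$ is immediate; for $E\otimes L$ one must instead carry the matrix-valued connection form $\theta=h^{-1}\partial h$ and its adjoint through the entire computation and check that their first-order contributions combine only into the curvature term of family (ii) and into exact forms annihilated by the outer $\partial$ or $\overline\partial$, all while keeping the commutator algebra of $\overline\partial$, $\overline\partial^{*}$ and $\Lambda$ compatible with the bundle-valued inner product. This is the vector-bundle Bochner--Kodaira--Nakano bookkeeping: notationally heavier than Berndtsson's scalar case but structurally the same, which is why I would treat $E\otimes L$ throughout as a single rank-$r$ Hermitian holomorphic vector bundle rather than splitting off the line-bundle factor. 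A secondary technical point is that, $\omega$ being merely Hermitian, the coordinates can be adapted to $\omega$ only to first order, so family (iii) is genuinely present; one must check that each of its terms carries a first derivative of $\omega$ and no derivative of $u$ beyond those already accounted for, which is precisely why the error can be absorbed into the harmless $c|u|^{2}$ term.
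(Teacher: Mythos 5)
Your overall scheme coincides with the paper's: define $T_u$ via the Hodge star (your multi-index formula is the same object), expand $i\partial\overline{\partial}(T_u\wedge\omega_{q-1})$ by Leibniz and the commutation relation $D'\overline{\partial}+\overline{\partial}D'=\Theta_{E\otimes L}$, identify the $-2\mathrm{Re}\langle\Box u,u\rangle$ and curvature terms, keep the non-negative $|\overline{\partial}^*u|^2$, and push the torsion into $-c|u|^2$; the rank-$r$ bookkeeping is indeed handled as you propose, by normal frames reducing pointwise to the scalar case. The genuine gap is in your family (i): the term quadratic in first derivatives of $u$, namely $\pm\, h_{\alpha\overline{\beta}}\,\overline{\partial}\gamma^\alpha\wedge\overline{\overline{\partial}\gamma^\beta}\wedge\omega_{q-1}$ with $\gamma^\alpha=*u^\alpha$, is \emph{not} manifestly non-negative. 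The pairing $(\chi,\eta)\mapsto -c_{n-q+1}h_{\alpha\overline{\beta}}\chi^\alpha\wedge\overline{\eta^\beta}\wedge\omega_{q-1}$ on $E\otimes L$-valued $(n-q,1)$-forms is indefinite: it is positive on the annihilator $V^o$ and negative on forms of the shape $\chi_0\wedge\omega$. The crux of the Siu--Berndtsson argument, which your proposal never articulates, is that $\overline{\partial}u=0$ forces $\overline{\partial}\gamma\wedge\omega_q=(-1)^{n-q-1}\gamma\wedge\overline{\partial}\omega_q$, so that in the decomposition $\overline{\partial}\gamma=\chi_1+\chi_0\wedge\omega$ the negative-direction component satisfies $|\chi_0|=O(|u|\,|\overline{\partial}\omega_q|)$; only then is the possible negative contribution of this term bounded by $c|u|^2$, with $c=0$ when $\overline{\partial}\omega_q=0$. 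As written, your claim is false for a general Hermitian $\omega$ --- which is precisely the setting of the proposition --- and even in the K\"ahler case the non-negativity follows from this signature analysis (via the polarization identity $[\xi,\xi]\omega_n=(|\xi|^2-|\xi\wedge\omega_q|^2)\omega_n$), so it is a consequence to be proved, not a manifest fact. Since you yourself note that the vector-bundle extension is only bookkeeping, this sign analysis is the real content of the statement, and omitting it leaves the inequality unproven.

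A secondary inaccuracy: not every torsion error in your family (iii) is pointwise $O(|u|^2)$. The error arising from $\overline{\partial}\overline{\partial}^*u^\alpha=(-1)^{n-q}c_{n-q-1}\bigl(\overline{\partial}D'\gamma^\alpha\wedge\omega_{q-1}+O(|\overline{\partial}^*u|\,|\overline{\partial}\omega_{q-1}|)\bigr)$ contributes a term of size $|\overline{\partial}^*u|\,|\overline{\partial}\omega_{q-1}|\,|u|$, which must be absorbed via $C_\varepsilon|u|^2+\varepsilon|\overline{\partial}^*u|^2$ into the retained non-negative $|\overline{\partial}^*u|^2$; this is harmless, but it is incompatible with your plan to discard all non-negative first-order terms outright and to bound family (iii) by $C(\omega)|u|^2$ term by term.
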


Actually, when $d\omega=0$, we can get the following identity for smooth $E\otimes L$-valued $(n,q)$-form $u$.
\begin{align}\label{bochner formula}
i\partial\overline{\partial}(T_u\wedge\omega_{q-1})=
(-2Re\langle\overline{\partial}\overline{\partial }^* u,u\rangle+\langle\Theta_{E\otimes L}
\wedge\Lambda u,u\rangle+|\overline{\partial}\gamma_u|^2+|\overline{\partial}^*u|^2-|\overline{\partial}u|^2)\omega_n.
\end{align}

It is worth to mention that, by  carefully checking the computations of the $\partial\overline{\partial}$-formula,
we are able to derive a similar formula in  the case that the metric of $L$ is singular.

\begin{prop}\label{singular case 1}Let $(X,\omega)$ be a compact K\"{a}hler manifold, $(E,h_E)\rightarrow X$
be a holomorphic Hermitian vector bundle over $X$, and $(L,h_L)$ be a holomrphic pseudo-effective line bundle
with singular metric $h_L$ such that $i\Theta_{L,h_L}\geq \gamma$ with $\gamma$ a continuous
real $(1,1)$-form on $X$. Suppose that $u$ is an $E\otimes L$-valued $(n,q)$-form. Then we have the following equality
\begin{align*}
i\partial\overline{\partial}(T_u\wedge\omega_{q-1})=(-2Re\langle\overline{\partial}
\overline{\partial }^* u,u\rangle+\langle\Theta_{E\otimes L}\wedge\Lambda u,u\rangle+|
\overline{\partial}\gamma_u|^2+|\overline{\partial}^*u|^2-|\overline{\partial}u|^2)\omega_n.
\end{align*}
Moreover, if $u$ is $\overline{\partial}$-closed, then we have
\begin{align*}
i\partial\overline{\partial}(T_u\wedge\omega_{q-1})\geq(-2Re\langle\Box u,u\rangle+\langle\Theta_{E\otimes L}
\wedge\Lambda u,u\rangle)\omega_n.
\end{align*}
\end{prop}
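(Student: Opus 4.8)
The plan is to obtain Proposition \ref{singular case 1} as a limiting case of the smooth identity \eqref{bochner formula} in Proposition \ref{ddbar formula 1} by a regularization of the singular metric $h_L$. First I would fix a smooth reference metric $h_0$ on $L$ and write $h_L = h_0 e^{-2\psi}$ with $\psi$ a quasi-plurisubharmonic function on $X$ satisfying $i\Theta_{L,h_0} + i\partial\overline{\partial}\psi \geq \gamma$. Since $X$ is compact K\"{a}hler, one may regularize $\psi$: there is a decreasing sequence $\psi_\nu$ of smooth functions with $\psi_\nu \downarrow \psi$ pointwise and $i\Theta_{L,h_0} + i\partial\overline{\partial}\psi_\nu \geq \gamma - \epsilon_\nu\,\omega$ with $\epsilon_\nu \downarrow 0$ (Demailly's regularization, or simply convolution on coordinate patches glued via a partition of unity, since here we only need a \emph{lower} curvature bound up to a small error, not the precise $\gamma$). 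Set $h_\nu := h_0 e^{-2\psi_\nu}$, a smooth metric on $L$ with $i\Theta_{L,h_\nu} \geq \gamma - \epsilon_\nu\,\omega$. Because $\omega$ is K\"{a}hler, the constant $c$ in Proposition \ref{ddbar formula 1} vanishes and \eqref{bochner formula} holds \emph{verbatim} for each $h_\nu$: for every smooth $E\otimes L$-valued $(n,q)$-form $u$,
\begin{align*}
i\partial\overline{\partial}(T_u^{\nu}\wedge\omega_{q-1})=\bigl(-2\,\mathrm{Re}\langle\overline{\partial}\overline{\partial}^{*}_{\nu} u,u\rangle_{\nu}+\langle\Theta_{E\otimes L,h_\nu}\wedge\Lambda u,u\rangle_{\nu}+|\overline{\partial}\gamma_u^{\nu}|_{\nu}^2+|\overline{\partial}^{*}_{\nu}u|_{\nu}^2-|\overline{\partial}u|_{\nu}^2\bigr)\omega_n,
\end{align*}
where all pointwise norms, the adjoint $\overline{\partial}^{*}_{\nu}$, the contraction $\gamma_u^{\nu}$ and the form $T_u^{\nu}$ are computed with respect to $h_E\otimes h_\nu$.

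The second step is to pass to the limit $\nu\to\infty$. The key point is that the metrics $h_\nu = h_0 e^{-2\psi_\nu}$ \emph{increase} to $h_L$ (since $\psi_\nu$ decreases to $\psi$), so on the open set where $\psi$ is locally bounded every quantity converges. More precisely, fix a smooth $E\otimes L$-valued $(n,q)$-form $u$ with support contained in a coordinate patch. On compact subsets of $\{\psi > -\infty\}$ we have $e^{-2\psi_\nu} \to e^{-2\psi}$ and all its derivatives converging locally uniformly (this is where one uses that the $\psi_\nu$ are, after the usual mollification, uniformly locally Lipschitz up to a controlled error; alternatively work first on a fixed relatively compact patch and use the smooth convolutions directly, which converge in $C^\infty_{loc}$ on that patch). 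Hence the right-hand side above converges pointwise a.e.\ to
\begin{align*}
\bigl(-2\,\mathrm{Re}\langle\overline{\partial}\overline{\partial}^{*} u,u\rangle+\langle\Theta_{E\otimes L}\wedge\Lambda u,u\rangle+|\overline{\partial}\gamma_u|^2+|\overline{\partial}^{*}u|^2-|\overline{\partial}u|^2\bigr)\omega_n,
\end{align*}
and the left-hand side $i\partial\overline{\partial}(T_u^{\nu}\wedge\omega_{q-1})$ converges, in the sense of currents, to $i\partial\overline{\partial}(T_u\wedge\omega_{q-1})$: indeed $T_u^{\nu} = c_{n,q}\, i^{(n-q)^2}\, u\wedge\overline{u}\,\{h_\nu\}$ up to the Hodge-star gymnastics, so testing against a smooth form $\chi$ one moves the $i\partial\overline{\partial}$ onto $\chi$ and invokes dominated convergence. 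Combining, the two limits agree as currents, which is exactly the first displayed identity of Proposition \ref{singular case 1}. The second inequality is then purely algebraic: when $u$ is $\overline{\partial}$-closed, $\overline{\partial}u = 0$ kills the last term, $\Box u = \overline{\partial}\overline{\partial}^{*}u$ on an $(n,q)$-form of top holomorphic degree, and $|\overline{\partial}\gamma_u|^2 + |\overline{\partial}^{*}u|^2 \geq 0$, giving the stated lower bound.

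The main obstacle is controlling the \emph{regularization of the curvature lower bound} and the convergence of all the metric-dependent tensors simultaneously. One has to be careful that the curvature hypothesis $i\Theta_{L,h_L}\geq\gamma$ with $\gamma$ merely continuous survives mollification with only an $O(\epsilon_\nu)$ error; this is standard on K\"{a}hler manifolds but requires citing the appropriate regularization theorem (Demailly's, or the local convolution argument) and checking that, since the \emph{final} identity does not involve $\gamma$ at all — $\gamma$ only guarantees $h_L$ is a legitimate singular metric with plurisubharmonic-type local weights so that $T_u$ and $\mathcal{I}(h_L)$ make sense — we really only need $C^\infty_{loc}$ convergence of $h_\nu\to h_L$ away from the poles, plus an integrable domination near the poles to justify the distributional limit on the left. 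I would therefore prove the identity first for $u$ compactly supported in $\{\psi>-\infty\}$, then extend to all smooth $u$ by noting that $\{\psi=-\infty\}$ is pluripolar, hence of measure zero and of zero capacity, so it carries no mass for the currents involved; a standard cutoff argument near the poles completes the extension.
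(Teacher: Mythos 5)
Your route is genuinely different from the paper's, and as written it has a real gap. The paper does not regularize at all: in Section \ref{Singular ddbar} it works directly with the singular weight, using Lemma \ref{regularity lemma} to see that $\partial\psi$ has $L^1_{loc}$ coefficients, so that the singular Chern connection, $\overline{\partial}^*$, the curvature current and the commutator identity (\ref{curvature}) all make sense distributionally, and then it repeats the computation of Section \ref{ddbar smooth} verbatim; the K\"ahler hypothesis enters only to kill the torsion error terms $O(|\overline{\partial}^*u||\overline{\partial}\omega_{q-1}|)$, which cannot be estimated when $\psi$ is singular.

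The gap in your approximation argument is the limit step. For a general quasi-psh weight the claim that $\psi_\nu\to\psi$ with all derivatives locally uniformly on compact subsets of $\{\psi>-\infty\}$ is false: $\psi$ need not be continuous, let alone smooth, where it is finite, and $\{\psi>-\infty\}$ need not be open. What one actually has is $L^1_{loc}$ convergence of $\psi_\nu$, weak-$*$ convergence of $i\partial\overline{\partial}\psi_\nu$ to a current of order zero, and (via Lemma \ref{regularity lemma}) only $L^1_{loc}$ control of the first derivatives. But the right-hand side of the regularized identity is nonlinear in the metric: the curvature term contains the product $\partial\overline{\partial}\psi_\nu\cdot e^{-\psi_\nu}|u|^2$, and $|\overline{\partial}^*_\nu u|^2_\nu$ contains $|\partial\psi_\nu|^2e^{-\psi_\nu}$-type terms; neither weak convergence of the Hessians nor $L^1_{loc}$ convergence of the gradients passes through such products, and mass can be lost or can concentrate in the limit, so the asserted pointwise a.e.\ convergence and the dominated-convergence step are unjustified. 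The closing extension is also not legitimate: one cannot prove the identity on $X\setminus\{\psi=-\infty\}$ and then discard the polar set as ``carrying no mass'', since both sides are currents of order zero that can charge pluripolar sets --- precisely the relevant situation in this paper, where $h_L$ has analytic singularities and $i\Theta_{L,h_L}$ has a divisorial part supported on the polar set, and $i\partial\overline{\partial}(T_u\wedge\omega_{q-1})$ can likewise charge it. Only your final algebraic deduction of the inequality from the identity (dropping $|\overline{\partial}\gamma_u|^2+|\overline{\partial}^*u|^2\geq 0$ and using $\overline{\partial}u=0$, $\Box u=\overline{\partial}\overline{\partial}^*u$) is unproblematic and agrees with the paper.
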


The  equality (\ref{bochner formula}) can be used to prove vanishing theorems which is equivalent to solve
$\overline{\partial}$-equations on compact K\"{a}hler manifolds. By using Proposition \ref{ddbar formula 1},
the main result of \cite{Ber02} is generalized to verctor bundle version.

\begin{thm}\label{vector bundle}
 Let $(X,\omega)$ be a compact Hermitian manifold with Hermitian metric $\omega$, $E$ be a  holomorphic
 vector bundle and $L$ be a   holomorphic line bundle over $X$. Assume that $L$ is given a metric
 of semipositive curvature. Take $q\geq 1$. Then if $0\leq \lambda\leq k$,
\begin{align*}
h^{n,q}_{\leq \lambda}(X, L^k\otimes E)\leq C(\lambda+1)^qk^{n-q}.
\end{align*}
If $1\leq k\leq \lambda$, then
\begin{align*}
h^{n,q}_{\leq \lambda}(X,L^k\otimes E)\leq C\lambda^n.
\end{align*}
\end{thm}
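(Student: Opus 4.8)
The plan is to follow Berndtsson's strategy from \cite{Ber02}, replacing the scalar computations by their vector bundle analogues as furnished by Proposition \ref{ddbar formula 1}. The starting point is the $\partial\overline{\partial}$-formula: for an $E\otimes L^k$-valued $(n,q)$-form $u$ that lies in the span of eigenforms of $\Box$ with eigenvalue $\leq\lambda$, one has $\langle\Box u,u\rangle\leq\lambda|u|^2$, so Proposition \ref{ddbar formula 1} gives the pointwise-integrated inequality
\begin{align*}
i\partial\overline{\partial}(T_u\wedge\omega_{q-1})\geq\bigl(\langle\Theta_{E\otimes L^k}\wedge\Lambda u,u\rangle-(2\lambda+c)|u|^2\bigr)\omega_n.
\end{align*}
Since $X$ is compact without boundary, integrating the left-hand side over $X$ yields zero (Stokes), so we obtain $\int_X\langle\Theta_{E\otimes L^k}\wedge\Lambda u,u\rangle\,\omega_n\leq(2\lambda+c)\|u\|^2$. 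Writing $\Theta_{E\otimes L^k}=k\,\Theta_L\otimes\mathrm{Id}_E+\Theta_E$ and using that $\Theta_L\geq0$ (semipositive curvature), the curvature term on the line bundle side is nonnegative; the bounded vector bundle term $\Theta_E$ and the constant $c$ are absorbed into a uniform constant. This is the mechanism that forces the eigenforms to concentrate, up to controlled error, on the locus where $\Theta_L$ degenerates.

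The second ingredient is the localization/Skoda argument. One covers $X$ by finitely many coordinate balls trivializing $E$ and $L$, chooses local weights, and on each ball compares $u$ with a model problem on $\mathbb{C}^n$ with the flat (or monomial) metric. Using a Skoda-type $L^2$ division estimate \cite{Sko82} one shows that the eigenvalue-$\leq\lambda$ condition, together with the curvature inequality just derived, confines the $L^2$-mass of $u$ to a region whose size is governed by $\lambda/k$; the rank $r$ of $E$ only multiplies the dimension count of the relevant local spaces by $r$, which is harmless for an $O(k^{n-q})$ statement. One then counts dimensions: for $(n,q)$-forms the local model contributes a space of dimension $\asymp(\lambda+1)^q k^{n-q}$ in the regime $\lambda\leq k$ (the $q$ ``small'' directions each contributing a factor $\lambda+1$ and the remaining $n-q$ directions a factor $k$), and $\asymp\lambda^n$ in the regime $k\leq\lambda$ where no direction is saturated by $k$. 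Summing over the finite cover preserves the order of growth.

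Concretely the steps are: (i) fix a finite trivializing cover and a partition of unity; (ii) for $u$ in the $\leq\lambda$-eigenspace, use Proposition \ref{ddbar formula 1} plus Stokes to get the global curvature bound $\int_X\langle\Theta_{E\otimes L^k}\wedge\Lambda u,u\rangle\,\omega_n\leq C(\lambda+1)\|u\|^2$; (iii) localize this into each chart, picking up only bounded errors from the transition functions and from $\overline{\partial}\omega$-terms; (iv) on each chart invoke the Skoda/Ohsawa--Takegoshi-type estimate to control $u$ by its ``low-frequency part'', exactly as in \cite{Ber02}, but now for $\mathbb{C}^r$-valued forms; (v) estimate the dimension of the space of such low-frequency parts by the two regimes above; (vi) sum over the cover and over the $r$ components. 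The case distinction $0\leq\lambda\leq k$ versus $1\leq k\leq\lambda$ enters only in step (v), in deciding whether the cutoff in the radial/frequency variable is set by $\lambda$ or by $k$.

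The main obstacle I anticipate is step (iii)--(iv): making the localization genuinely uniform in $k$ for the vector bundle, i.e.\ checking that the constants coming from the Hermitian metric $h_E$, its curvature $\Theta_E$, and the transition functions of $E$ do not interact badly with the high tensor power $L^k$, and that the Skoda division estimate can be run componentwise (or with a fixed finite generating set) so that the extra factor is just $r$ and not something growing with $k$. The nonclosedness of $\omega$ contributes the constant $c$ in Proposition \ref{ddbar formula 1}; since $c$ is independent of $k$ and of $\lambda$ it is harmless, but one must carry it through the localization. Once these uniformity issues are settled, the dimension count is the same combinatorial estimate as in the scalar case and the two displayed bounds follow.
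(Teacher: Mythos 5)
Your overall intention --- rerun Berndtsson's argument with the vector bundle $\partial\overline{\partial}$-formula, noting that the rank of $E$ and the torsion constant $c$ only affect constants --- is exactly the paper's route, and those two observations are correct. But the two analytic steps that actually produce the bound are missing or replaced by something too weak. First, integrating the inequality of Proposition \ref{ddbar formula 1} over the closed manifold $X$ (your step (ii)) annihilates the left-hand side and leaves only $\int_X\langle\Theta_{E\otimes L^k}\wedge\Lambda u,u\rangle\,\omega_n\le C(\lambda+1)\|u\|^2$; since $\Theta_L\ge 0$ this is a soft one-sided statement with no localization in it, and no dimension count can be extracted from it. What the paper (following \cite{Ber02}) does instead is use the $\partial\overline{\partial}$-inequality \emph{locally}, tested against suitable weights on small coordinate balls (this is where \cite{Sko82} enters --- Skoda's extension theorem for closed positive currents, not an $L^2$ division or Ohsawa--Takegoshi-type estimate), to prove the key mass concentration estimate: for $u$ in the span of eigenforms with eigenvalue $\le\lambda$ and any center $x$, one has $\int_{|z|<r}|u|^2\,\omega_n\le Cr^{2q}(\lambda+1)^q\|u\|^2$ whenever $r<\lambda^{-1/2}$ and $r<c_0$, with $C,c_0$ independent of $k$, $\lambda$ and $x$. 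Your statement that the mass is ``confined to a region whose size is governed by $\lambda/k$'' is not the estimate that is needed or proved.

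Second, your step (v) --- ``the local model contributes a space of dimension $\asymp(\lambda+1)^qk^{n-q}$, the $q$ small directions each giving $\lambda+1$ and the others $k$'' --- is the conclusion restated as a heuristic, not an argument. The paper's mechanism is a pointwise estimate obtained by the rescaling $u^{(k)}(z)=u(z/\sqrt{k})$, $k\Box^{(k)}u^{(k)}=(\Box u)^{(k)}$: in normal trivializations the rescaled Laplacian converges to a $k$-independent elliptic operator, so G\aa rding-type elliptic estimates give $|u(x)|^2\le Ck^n\int_{|z|<c/\sqrt{k}}|u|^2\,\omega_n$ when $\lambda\le k$; combined with the local mass estimate at radius $r\sim k^{-1/2}$ this yields $|u(x)|^2\le C(\lambda+1)^qk^{n-q}\|u\|^2$ uniformly in $x$, and the dimension bound follows by summing over an orthonormal basis and integrating over $X$ (the usual trace/Bergman-kernel argument); the regime $1\le k\le\lambda$ is handled the same way, rescaling by $\sqrt{\lambda}$ instead, giving $C\lambda^n$. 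Without the local (not global) use of the $\partial\overline{\partial}$-formula, the rescaled elliptic estimate, and the trace argument, your outline does not yet constitute a proof, although its skeleton is compatible with the one the paper actually carries out.
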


Then we consider the case of $L$ pseudo-effective with algebraic singularities, i.e. $L$ is equipped
a singular metric $h_L$ with algebraic singularities whose curvature current is semi-positive
in the sense of currents. 

By combining Bonavero's technique \cite{Bon98}, Berndtsson's technique \cite{Ber02},
and Theorem \ref{vector bundle}, we are able to obtain the following

\begin{thm}\label{main result}
Let $(X,\omega)$ be a compact Hermitian manifold of complex dimension $n$. $(L,h_L)$ is a pseudo-effective
line bundle over $X$, such that $h_L$ is a singular metric with algebraic singularities. $E$ is a
holomorphic vector bundle over $X$. Then we have the following estimate
\begin{align*}
\dim_{\mathbb{C}} H^{q}(X,L^k\otimes E\otimes \mathcal{I}(h_L^{k}))\leq Ck^{n-q}
\end{align*}
for $k$ large, where $\mathcal{I}(h_L^k)$ is the multiplier ideal sheaf associated to the metric $h^k_L$ of $L^k$.
\end{thm}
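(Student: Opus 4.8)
The plan is to reduce, via Bonavero's technique, the singular estimate on $X$ to the smooth semi-positive estimate of Theorem \ref{vector bundle} on a modification $\widetilde{X}$. Since $h_L$ has algebraic singularities, after possibly passing to a multiple there is a modification $\mu\colon\widetilde{X}\to X$ (a finite composition of blow-ups with smooth centers) such that $\mu^*h_L = h'\cdot e^{-2\sum a_j\log|g_j|}$, where $h'$ is a smooth metric on $\mu^*L\otimes\mathcal{O}(-\sum a_j E_j)$ with semi-positive curvature, the $E_j$ are the components of the exceptional (plus strict transform of the zero) divisor, and the $a_j\geq 0$ are rational; write $D=\sum a_j E_j$ and $L'=\mu^*L\otimes\mathcal{O}(-D)$, a line bundle carrying the smooth semi-positive metric $h'$. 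The first step is therefore to fix such a log-resolution and to record that $\mathcal{I}(h_L^k)$ is computed downstairs by $\mu_*\mathcal{O}_{\widetilde X}(\lceil -kD\rceil + K_{\widetilde X/X})$, i.e. the classical birational description of multiplier ideals for algebraic singularities; here we absorb the relative canonical bundle $K_{\widetilde X/X}=\mathcal{O}(\sum b_j E_j)$.

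The second step is a Leray/projection-formula comparison of cohomology. Using $R^i\mu_*\mathcal{O}_{\widetilde X}(K_{\widetilde X/X}+\lceil -kD\rceil)=0$ for $i>0$ (a local vanishing statement for modifications, valid since the centers are smooth — this is where one uses that the singularities are algebraic and that $\mu$ is a genuine composition of smooth blow-ups), together with $\mu^*E$ and the projection formula, the Leray spectral sequence degenerates enough to give an injection
\begin{align*}
H^{q}(X,L^k\otimes E\otimes\mathcal{I}(h_L^k))\;\hookrightarrow\; H^{q}\bigl(\widetilde X,\ (L')^{k}\otimes \mu^*E\otimes \mathcal{O}_{\widetilde X}\bigl(kD+\lceil -kD\rceil + K_{\widetilde X/X}\bigr)\bigr).
\end{align*}
Set $F_k := \mathcal{O}_{\widetilde X}(kD+\lceil -kD\rceil+K_{\widetilde X/X})$; since $0\le kD - \lceil kD\rceil < \sum E_j$ componentwise and $K_{\widetilde X/X}$ is fixed, $F_k$ ranges over a \emph{finite} set of line bundles as $k$ varies (it depends only on $k$ modulo the common denominator of the $a_j$). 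Thus it suffices to bound, for each residue class, $\dim H^q(\widetilde X, (L')^k\otimes \widetilde E_r)$ where $\widetilde E_r := \mu^*E\otimes F_{r}$ is a fixed holomorphic vector bundle on the compact manifold $\widetilde X$.

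The third step is to invoke Theorem \ref{vector bundle}: equip $\widetilde X$ with any Hermitian metric, equip $L'$ with the smooth semi-positive metric $h'$, and apply the $\lambda=0$ case of Theorem \ref{vector bundle} with the vector bundle $\widetilde E_r\otimes K_{\widetilde X}^{-1}$ so that the $H^{n,q}$ appearing there becomes $H^q(\widetilde X,(L')^k\otimes \widetilde E_r)$. This yields $\dim H^q(\widetilde X,(L')^k\otimes\widetilde E_r)\le C_r k^{n-q}$ for all $k$. Taking the maximum of the finitely many constants $C_r$ over residue classes and composing with the injection of step two gives $\dim H^q(X,L^k\otimes E\otimes\mathcal{I}(h_L^k))\le Ck^{n-q}$ for $k$ large, as desired.

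The main obstacle is step two: producing a genuine \emph{injection} (not merely an inequality of dimensions with uncontrolled correction terms) on cohomology under $\mu$. One must ensure the vanishing of the higher direct images of the precise sheaf $\mathcal{O}_{\widetilde X}(K_{\widetilde X/X}+\lceil -kD\rceil)$ — uniformly in $k$ — so that $H^q$ downstairs injects into $H^q$ upstairs; this is exactly where the hypothesis ``algebraic singularities'' (hence a resolution by blow-ups along smooth centers, with $\lceil -kD\rceil$ a divisor supported on simple normal crossings) is essential, and where a careless argument would leak. A secondary, more bookkeeping-type, point is to check that only finitely many twisting bundles $F_k$ occur and that Theorem \ref{vector bundle}'s constant can be taken uniform over this finite family — this is immediate once the finiteness is observed, since there are finitely many constants to maximize.
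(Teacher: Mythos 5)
Your proposal is correct and follows essentially the same route as the paper: resolve the algebraic singularities so that the multiplier ideal becomes invertible upstairs, observe that the twists $F_k$ run over finitely many residue classes determined by the denominator of the rational coefficients, endow the ``de-singularized'' line bundle with the smooth semi-positive metric coming from the absolutely continuous part of the curvature current, and apply Theorem \ref{vector bundle} at $\lambda=0$ (after twisting by $K_{\widetilde X}^{-1}$), then compare cohomology on $X$ and $\widetilde X$. The only cosmetic differences are that the paper obtains the up--down comparison as an isomorphism by applying Lemma \ref{canonical sheaf-modification} step by step to the blow-ups of the modification, where you invoke local vanishing of $R^i\mu_*$ directly to get an injection, and that, since your $D$ has rational coefficients, $L'$ is only a $\mathbb{Q}$-line bundle, so strictly you should (as the paper does) apply Theorem \ref{vector bundle} to powers of the integral bundle $\widehat L=(L')^{\otimes m}$ within each residue class --- a bookkeeping point your finiteness observation already contains.
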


Thus we give a positive answer to Question \ref{WZ que} under the assumption of algebraic singularities.\\

Combining with an argument related to an exact sequence, it follows from Theorem \ref{main result} that the following
partial solution to Question \ref{Demailly question} holds:

\begin{thm}\label{application-partial answer 1}Let $X$ be a compact complex manifold, $E\rightarrow X$ be
a holomorphic vector bundle over $X$, and $L\rightarrow X$ be a holomorphic line bundle with a
singular Hermitian metric $h_L$ with algebraic singularities such that the curvature current of $h_L$ is
semi-positive. Assume that the dimension of the singular locus of $h_L$ is $m$. Then for $q>m$, we have that
\begin{align*}
\dim H^q(X,\mathscr{O}_X(E\otimes L^k))\leq Ck^{n-q}.
\end{align*}
\end{thm}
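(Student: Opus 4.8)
The plan is to deduce Theorem~\ref{application-partial answer 1} from Theorem~\ref{main result} by analyzing the quotient sheaf $\mathscr{O}_X(E\otimes L^k)/\big(\mathscr{O}_X(E\otimes L^k)\otimes \mathcal{I}(h_L^k)\big)$, whose cohomology controls the difference between the two estimates. Since $h_L$ has algebraic singularities, after a modification (or directly in the original picture) the singular locus $Y$ of $h_L$ is an analytic subset of dimension $m$, and the multiplier ideal sheaf $\mathcal{I}(h_L^k)$ agrees with $\mathscr{O}_X$ away from $Y$. Hence the quotient sheaf $\mathcal{Q}_k := \mathscr{O}_X(E\otimes L^k)\otimes\big(\mathscr{O}_X/\mathcal{I}(h_L^k)\big)$ is supported on $Y$, so $H^q(X,\mathcal{Q}_k)=0$ for $q>m=\dim Y$.

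First I would write down the short exact sequence of sheaves
\begin{align*}
0\longrightarrow \mathscr{O}_X(E\otimes L^k)\otimes\mathcal{I}(h_L^k)\longrightarrow \mathscr{O}_X(E\otimes L^k)\longrightarrow \mathcal{Q}_k\longrightarrow 0
\end{align*}
and take the associated long exact sequence in cohomology. The relevant portion reads
\begin{align*}
\cdots\longrightarrow H^q\big(X,\mathscr{O}_X(E\otimes L^k)\otimes\mathcal{I}(h_L^k)\big)\longrightarrow H^q\big(X,\mathscr{O}_X(E\otimes L^k)\big)\longrightarrow H^q(X,\mathcal{Q}_k)\longrightarrow\cdots
\end{align*}
For $q>m$ the last term vanishes because $\mathcal{Q}_k$ is a coherent sheaf supported on the $m$-dimensional set $Y$ (Grothendieck vanishing, or the elementary fact that sheaf cohomology of a sheaf supported on an analytic set of dimension $m$ vanishes above degree $m$). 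Therefore the natural map $H^q(X,\mathscr{O}_X(E\otimes L^k)\otimes\mathcal{I}(h_L^k))\to H^q(X,\mathscr{O}_X(E\otimes L^k))$ is surjective, which gives
\begin{align*}
\dim H^q(X,\mathscr{O}_X(E\otimes L^k))\leq \dim H^q\big(X,\mathscr{O}_X(E\otimes L^k)\otimes\mathcal{I}(h_L^k)\big)\leq Ck^{n-q},
\end{align*}
where the last inequality is exactly Theorem~\ref{main result}.

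The main technical point to be careful about is the justification that $\mathcal{Q}_k$ really is supported on a fixed analytic set of dimension $m$ \emph{uniformly in $k$}, and that $m$ is indeed the dimension of the singular locus of $h_L$ as stated. Because $h_L$ has algebraic singularities, locally $\varphi = \tfrac{c}{2}\log\big(\sum|g_j|^2\big)+O(1)$ for holomorphic $g_j$ and $c>0$; the zero set $V(\mathcal{I}(h_L^k))$ is then contained in the common zero locus of the $g_j$ for every $k$, an analytic set independent of $k$ whose dimension is (by hypothesis) $m$. Passing to a log-resolution does not increase this dimension. One should also check that replacing $E$ by $E\otimes L^k$, i.e. tensoring by a locally free sheaf, does not change the support of the quotient — which is clear since tensoring by a locally free sheaf is exact and preserves supports. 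With these points in place, the argument is a routine application of the long exact sequence, and no further positivity input beyond Theorem~\ref{main result} is needed.
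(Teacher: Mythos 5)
Your argument is correct and is essentially the paper's own proof: the same short exact sequence $0\to\mathscr{O}_X(E\otimes L^k)\otimes\mathcal{I}(h_L^k)\to\mathscr{O}_X(E\otimes L^k)\to\mathcal{Q}_k\to 0$, the same long exact sequence, vanishing of $H^q$ of the quotient supported on the $m$-dimensional singular locus for $q>m$, and then Theorem~\ref{main result}. The only cosmetic difference is that the paper notes (via Skoda's lemma or the strong Noetherian property) that the support $V_k$ of $\mathscr{O}_X/\mathcal{I}(h_L^k)$ stabilizes to the singular locus for large $k$, whereas you only use the containment of that support in the fixed $m$-dimensional singular set, which suffices equally well.
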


Furthermore, by a Diophantine approximation argument, we can weaken the assumption of algebraic singularities in Theorem \ref{application-partial answer 1}.

\begin{thm}\label{analytic singularity 1}
	Let $X$ be a compact complex manifold, $E\rightarrow X$ be
	a holomorphic vector bundle over $X$, and $L\rightarrow X$ be a holomorphic line bundle with a
	singular Hermitian metric $h_L$ with  analytic singularities such that the curvature current
	of $h_L$ is semi-positive. Let $h$ be an arbitrarily smooth Hermitian metric of $L$, and set $e^{-\psi}=h_L/h$. Suppose that there is a small $\varepsilon>0$, such that $he^{-(1+\delta)\psi}$  are singular metrics of $L$ with semi-positive curvature current for $|\delta|<\varepsilon$. Assume that the dimension of the singular locus of $h_L$ is $m$.
	Then for $q>m$, we have that
	\begin{align*}
	\dim H^q(X,\mathscr{O}_X(E\otimes L^k))\leq Ck^{n-q}.
	\end{align*}
\end{thm}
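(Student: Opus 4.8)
The plan is to reduce Theorem~\ref{analytic singularity 1} to the already-established Theorem~\ref{application-partial answer 1}, which handles the case of algebraic singularities, by approximating the given analytic singularities with algebraic ones that are close enough to preserve both the semi-positivity of the curvature current and the dimension of the singular locus. Recall that a singular metric $h_L = he^{-\psi}$ has analytic singularities if locally $\psi = \frac{c}{2}\log\bigl(\sum_j |g_j|^2\bigr) + O(1)$ for some $c>0$ and holomorphic functions $g_j$; the singularities are algebraic precisely when $c$ is rational. So the heart of the matter is a Diophantine approximation: replace $c$ by a nearby rational number $c' = \frac{p}{q}$.

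\medskip

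First I would fix an arbitrarily small smooth Hermitian metric $h$ on $L$ and write $e^{-\psi} = h_L/h$, so that $i\Theta_{L,h_L} = i\Theta_{L,h} + i\partial\overline{\partial}\psi \geq 0$ as a current. The hypothesis provides $\varepsilon>0$ such that $he^{-(1+\delta)\psi}$ has semi-positive curvature current for all $|\delta|<\varepsilon$; equivalently $i\Theta_{L,h} + (1+\delta)\,i\partial\overline{\partial}\psi \geq 0$ for $|\delta|<\varepsilon$. The point of this hypothesis is exactly to give us room to perturb the coefficient $c$ in the local description of $\psi$ slightly in either direction without destroying positivity. Since $\psi$ has analytic singularities with coefficient $c$, pick a rational $c'$ with $|c'/c - 1| < \varepsilon$ and let $\psi'$ be the potential with the same local generators $\{g_j\}$ but coefficient $c'$; then $\psi' = \frac{c'}{c}\psi + (\text{bounded})$ up to the usual patching, so $h_L' := he^{-\psi'}$ is a singular metric on $L$ with \emph{algebraic} singularities whose curvature current is semi-positive, by the choice $\delta = c'/c - 1$. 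Moreover $\psi'$ and $\psi$ have the same unbounded locus (the common zero set of the $g_j$), so the singular locus of $h_L'$ has the same dimension $m$ as that of $h_L$.

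\medskip

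Next I would apply Theorem~\ref{application-partial answer 1} to the line bundle $L$ equipped with the algebraic-singularity metric $h_L'$: for $q > m$ this yields $\dim H^q(X,\mathscr{O}_X(E\otimes L^k)) \leq C k^{n-q}$, with $C$ independent of $k$. But the left-hand side $\dim H^q(X,\mathscr{O}_X(E\otimes L^k))$ does not involve the metric at all --- it is the cohomology of the coherent sheaf $\mathscr{O}_X(E\otimes L^k)$, computed without reference to $h_L$ or $h_L'$. Hence the estimate obtained via $h_L'$ is literally the estimate claimed for $h_L$, and we are done. The only subtlety in making this last step rigorous is ensuring the Diophantine approximation can be done \emph{locally coherently}, i.e. that the single rational $c'$ works in every coordinate chart of a finite cover; but since the coefficient $c$ is a global invariant of the analytic singularity type (the Lelong-type number is constant along the generic point of each component of the singular locus up to the patching functions, which are bounded), one value of $c'$ suffices, or at worst one does the approximation component-by-component on the finitely many components of the singular locus.

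\medskip

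I expect the main obstacle to be precisely the bookkeeping in the previous paragraph: verifying that ``analytic singularities'' as used in the statement really does come with a well-defined coefficient $c$ that can be perturbed to a rational number uniformly across charts, and confirming that the perturbed potential still patches into an honest metric on $L$ with the stated positivity --- this is where the hypothesis on $he^{-(1+\delta)\psi}$ is essential and must be invoked carefully. Once that is in place, the rest is a formal reduction, since the cohomological quantity being bounded is metric-independent.
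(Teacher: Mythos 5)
Your proposal is correct and follows essentially the same route as the paper: perturb the coefficient $c$ of the analytic singularity to a nearby rational (staying within the $\varepsilon$-window guaranteed by the hypothesis on $he^{-(1+\delta)\psi}$), observe that the resulting metric has algebraic singularities, semi-positive curvature current, and the same singular locus, and then apply Theorem~\ref{application-partial answer 1}, using that $\dim H^q(X,\mathscr{O}_X(E\otimes L^k))$ is metric-independent. The only cosmetic difference is that the paper invokes Borel's Diophantine approximation theorem to choose the rational, whereas you simply use density of $\mathbb{Q}$, which suffices.
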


Combining with injectivity theorem obtained in \cite{Mat14}, we get the following two vanishing theorems.
\begin{thm}\label{vanishing1}Let $X$ be a compact K\"{a}hler manifold, and $L$ be a holomorphic
line bundle over $X$. Suppose that $L$ is pseudo-effective, and the singular metric $h_{\min}$  with
minimal singularities of $L$ is with algebraic singularities. Then we have that
\begin{align*}
H^q(X,\mathscr{O}_X(K_X\otimes L)\otimes\mathcal{I}(h_{\min}))=0  ~~~~~\mbox{~~~for~~~~}~~~q>n-\kappa(L).
\end{align*}
\end{thm}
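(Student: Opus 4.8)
The plan is to combine the injectivity theorem of \cite{Mat14} with the asymptotic estimate of Theorem \ref{main result} in a section-counting argument. First I would dispose of the trivial cases: if $\kappa(L)=-\infty$ then the condition $q>n-\kappa(L)$ is vacuous, and if $\kappa(L)=0$ then $q>n$ is impossible since $\dim_{\mathbb{C}}X=n$; so we may assume $\kappa(L)=:\kappa\geq 1$ and must show $H^q(X,\mathscr{O}_X(K_X\otimes L)\otimes\mathcal{I}(h_{\min}))=0$ for all $q>n-\kappa$.

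Next I would record the elementary but essential observation that, because $h_{\min}=e^{-2\varphi_{\min}}$ is a metric with minimal singularities, every nonzero global section $s\in H^0(X,L^m)$ satisfies $\sup_X|s|^2_{h_{\min}^m}<+\infty$: indeed $1/|s|^2$ is a singular metric on $L^m$ with semi-positive curvature current $[\mathrm{div}(s)]$, hence is dominated up to a bounded factor by the metric $h_{\min}^m$ with minimal singularities, which is exactly this boundedness. In particular $|s|^2e^{-2m\varphi_{\min}}$ is locally bounded, so that $H^0(X,L^m)=H^0(X,L^m\otimes\mathcal{I}(h_{\min}^m))$ for every $m$, the multiplication map $\mathcal{I}(h_{\min})\xrightarrow{\,\cdot s\,}\mathcal{I}(h_{\min}^{m+1})$ is a well-defined sheaf morphism, and the hypotheses of the injectivity theorem of \cite{Mat14} are met for the pseudo-effective pair $(L,h_{\min})$ (whose curvature current is semi-positive, $h_{\min}$ being a positively curved metric with minimal singularities) together with any nonzero $s\in H^0(X,L^m)$.

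Then I would argue by contradiction: suppose $0\neq\alpha\in H^q(X,\mathscr{O}_X(K_X\otimes L)\otimes\mathcal{I}(h_{\min}))$. The injectivity theorem of \cite{Mat14} asserts that for each nonzero $s\in H^0(X,L^m)$ the multiplication map
\[
\times s:\ H^q\bigl(X,\mathscr{O}_X(K_X\otimes L)\otimes\mathcal{I}(h_{\min})\bigr)\longrightarrow H^q\bigl(X,\mathscr{O}_X(K_X\otimes L^{m+1})\otimes\mathcal{I}(h_{\min}^{m+1})\bigr)
\]
is injective. Applying this to differences $s_1-s_2$ shows that the linear map $s\mapsto s\cdot\alpha$ from $H^0(X,L^m)$ into $H^q(X,\mathscr{O}_X(K_X\otimes L^{m+1})\otimes\mathcal{I}(h_{\min}^{m+1}))$ is injective, hence
\[
h^0(X,L^m)\ \leq\ \dim_{\mathbb{C}}H^q\bigl(X,\mathscr{O}_X(K_X\otimes L^{m+1})\otimes\mathcal{I}(h_{\min}^{m+1})\bigr).
\]
Now I would invoke Theorem \ref{main result} with $E=K_X$ and $k=m+1$ — here the hypothesis that $h_{\min}$ has algebraic singularities (passed to the power $h_{\min}^{m+1}$) is exactly what is needed — to bound the right-hand side by $C(m+1)^{n-q}$ for $m$ large. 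Thus $h^0(X,L^m)=O(m^{n-q})$ as $m\to\infty$, which by the characterization $\kappa(L)=\limsup_{m}\frac{\log h^0(X,L^m)}{\log m}$ forces $\kappa(L)\leq n-q$, contradicting $q>n-\kappa(L)$. Hence the cohomology group vanishes.

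The genuinely substantive inputs are Theorem \ref{main result} (already established) and the Kähler version of the injectivity theorem of \cite{Mat14}; the step requiring the most care is the second paragraph, namely verifying that the standing hypotheses of that injectivity theorem — semi-positivity of the curvature current of $h_{\min}$, the uniform bound $\sup_X|s|_{h_{\min}^m}<\infty$ for all global sections $s$ of $L^m$, and the compatibility $\mathcal{I}(h_{\min})\cdot s\subseteq\mathcal{I}(h_{\min}^{m+1})$ — are all automatic consequences of $h_{\min}$ having minimal singularities, and that the algebraic-singularities assumption on $h_{\min}$ is inherited by its powers so that Theorem \ref{main result} legitimately applies.
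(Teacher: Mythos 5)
Your argument is correct and is essentially the paper's own proof: the paper likewise derives the inequality $h^0(X,L^{k-1})=h^0_{bdd,h_{\min}^{k-1}}(X,L^{k-1})\leq h^q(X,\mathscr{O}_X(K_X\otimes L^{k})\otimes\mathcal{I}(h_{\min}^{k}))$ from Matsumura's injectivity theorem (quoted there as Corollary 3.3 of \cite{Mat14}, which is exactly the section-counting statement you re-derive by applying injectivity to differences of sections), and then contradicts Theorem \ref{main result} via the definition of $\kappa(L)$. The only cosmetic difference is that you verify the boundedness of sections and the hypotheses of the injectivity theorem explicitly, where the paper summarizes this as $h_{\min}$ admitting an analytic Zariski decomposition.
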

\begin{thm}\label{vanishing2}
Let $X$ be a compact K\"{a}hler manifold, and $L$ be a holomorphic line bundle with non-negative
Kodaira-Iitaka dimension over $X$. Suppose that $L$ is pseudo-effective, and the Siu's metric
$h_{siu}$   of $L$ is with algebraic singularities. Then we have that
\begin{align*}
H^q(X,\mathscr{O}_X(K_X\otimes L)\otimes\mathcal{I}(h_{siu}))=0  ~~~~~\mbox{~~~for~~~~}~~~q>n-\kappa(L).
\end{align*}
\end{thm}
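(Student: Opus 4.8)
We may assume $\kappa:=\kappa(L)\ge 1$, for if $\kappa=0$ the assertion concerns only $q>n$ and is automatic. The plan is to use Matsumura's injectivity theorem to trade the fixed group $H^q(X,K_X\otimes L\otimes\mathcal I(h_{siu}))$ for the same cohomology of an arbitrarily high power of $L$, to transport that group to the algebraic model provided by the Iitaka fibration of $L$ (which is possible because $h_{siu}$ has algebraic singularities), and then to conclude by a relative-dimension argument over a base of dimension $\kappa$. Concretely: fix $m_0$ large and divisible so that $\Phi_{|L^{m_0}|}$ induces the Iitaka fibration of $L$, and choose $0\ne s\in H^0(X,L^{m_0})$. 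Since $h_{siu}$ is Siu's metric it has semi-positive curvature current and the sections of $L^{m_0}$ have bounded norm for $h_{siu}^{m_0}$, so $\sup_X|s^{\otimes k}|_{h_{siu}^{m_0k}}=\sup_X|s|_{h_{siu}^{m_0}}^{k}<\infty$; applying the injectivity theorem of \cite{Mat14} to the section $s^{\otimes k}$ of $L^{m_0k}$ produces, for every $k\ge 1$ and every $q$, an injection
\begin{align*}
H^q\!\left(X,\,K_X\otimes L\otimes\mathcal I(h_{siu})\right)\hookrightarrow H^q\!\left(X,\,K_X\otimes L^{m_0k+1}\otimes\mathcal I(h_{siu}^{\,m_0k+1})\right).
\end{align*}
It therefore suffices to show the target vanishes for one value of $k$, under the assumption $q>n-\kappa$.

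Next I would pass to an algebraic model. Since $h_{siu}$ has algebraic singularities, choose a modification $\mu\colon\widetilde X\to X$ with $\widetilde X$ compact Kähler (a composition of blow-ups along smooth centres) that log-resolves the singularity of $h_{siu}$ and the base locus of $|L^{m_0}|$ simultaneously; write $\mu^\ast L^{m_0}=\mathcal O_{\widetilde X}(F)\otimes f^\ast\mathcal O_Y(1)$ with $f^\ast\mathcal O_Y(1)$ base-point free, $f\colon\widetilde X\to Y$ the induced morphism onto the normal projective base $Y$ of the Iitaka fibration (so $\dim Y=\kappa$ and $f$ has relative dimension $n-\kappa$), and $F\ge0$ the fixed divisor. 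Let $\Delta\ge 0$ be the divisorial part on $\widetilde X$ of the pulled-back singularity of $h_{siu}$, so that $\mathcal I(h_{siu}^{\,m})=\mu_\ast\mathcal O_{\widetilde X}(K_{\widetilde X/X}-\lfloor m\Delta\rfloor)$; the place where algebraicity is used is the identification $m_0\Delta=F$ (valid once $m_0$ is large enough, because then the asymptotic divisorial singularity of Siu's metric is already realized by the fixed part of $|L^{m_0}|$). Using this together with the projection formula, the relation $\mu^\ast L^{m_0k+1}=f^\ast\mathcal O_Y(k)\otimes\mathcal O(kF)\otimes\mu^\ast L$, the identity $kF-\lfloor(m_0k+1)\Delta\rfloor=-\lfloor\Delta\rfloor$, and the local vanishing theorem $R^{>0}\mu_\ast\!\left(\omega_{\widetilde X}\otimes\mathcal O_{\widetilde X}(-\lfloor m\Delta\rfloor)\right)=0$, one obtains an isomorphism
\begin{align*}
H^q\!\left(X,\,K_X\otimes L^{m_0k+1}\otimes\mathcal I(h_{siu}^{\,m_0k+1})\right)\cong H^q\!\left(\widetilde X,\,\mathcal H\otimes f^\ast\mathcal O_Y(k)\right),\qquad \mathcal H:=K_{\widetilde X}\otimes\mu^\ast L\otimes\mathcal O_{\widetilde X}(-\lfloor\Delta\rfloor),
\end{align*}
where $\mathcal H$ is a fixed line bundle and all of the $k$-dependence sits in $f^\ast\mathcal O_Y(k)$.

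To finish: $R^jf_\ast\mathcal H=0$ for $j>n-\kappa$ because $f$ has relative dimension $n-\kappa$, and since $\mathcal O_Y(1)$ is ample on the projective variety $Y$, Serre vanishing gives $H^i\!\left(Y,R^jf_\ast\mathcal H\otimes\mathcal O_Y(k)\right)=0$ for all $i\ge 1$ and all $j$ once $k\gg0$. Hence the Leray spectral sequence of $f$ applied to $\mathcal H\otimes f^\ast\mathcal O_Y(k)$ degenerates at $E_2$ and
\begin{align*}
H^q\!\left(\widetilde X,\,\mathcal H\otimes f^\ast\mathcal O_Y(k)\right)=H^0\!\left(Y,\,R^qf_\ast\mathcal H\otimes\mathcal O_Y(k)\right)=0\qquad(q>n-\kappa,\ k\gg0).
\end{align*}
Together with the injection of the first step this gives $H^q(X,K_X\otimes L\otimes\mathcal I(h_{siu}))=0$ for $q>n-\kappa(L)$, as desired. (If one prefers, Theorem \ref{main result} with $E=K_X$ supplies the weaker bound $\dim H^q\!\left(X,K_X\otimes L^{m_0k+1}\otimes\mathcal I(h_{siu}^{m_0k+1})\right)\le C(m_0k+1)^{n-q}$, which by itself already settles the range $q>n$.)

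The step I expect to be the main obstacle is the passage to the algebraic model, specifically the identification of the divisorial part $\Delta$ of the algebraic singularity of Siu's metric with a rescaling of the fixed part of $|L^{m_0}|$, so that after pullback to $\widetilde X$ the whole $k$-dependence of $K_X\otimes L^{m_0k+1}\otimes\mathcal I(h_{siu}^{m_0k+1})$ is carried by $f^\ast\mathcal O_Y(k)$, the residual twist being a single fixed line bundle. This requires care with the floor function, the relative canonical divisor, and the local vanishing theorem, and it is precisely where the hypothesis that $h_{siu}$ has algebraic singularities is essential. A lesser point is to verify that Siu's metric satisfies the hypotheses of the injectivity theorem of \cite{Mat14} (semi-positive curvature current and boundedness of the relevant norms) and that that theorem may be iterated along the powers $s^{\otimes k}$.
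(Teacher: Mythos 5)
Your opening move (multiplying by $s^{\otimes k}$ and invoking Matsumura's injectivity theorem) is fine, but the rest of the argument rests on two claims that are not justified and are false in general, so there is a genuine gap. First, the identification $m_0\Delta=F$: since $\varphi_{siu}\geq \varphi_{m_0}+O(1)$ by the very construction of Siu's metric, one only gets $m_0\Delta\leq F$ as divisors on $\widetilde X$, and nothing in the hypothesis ``$h_{siu}$ has algebraic singularities'' forces equality for any finite $m_0$; the singularities of $h_{siu}$ encode the asymptotic behaviour of all the linear systems $|L^k|$, not the fixed part of a single one. If the inequality is strict on some component, then $kF-\lfloor (m_0k+1)\Delta\rfloor\approx k(F-m_0\Delta)$ grows with $k$, your twist $\mathcal H$ is no longer a fixed line bundle, and the Leray/Serre-vanishing computation collapses. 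In effect you are assuming that $\mu^*L^{m_0}$ splits as (pullback of an ample bundle from the Iitaka base) plus (the divisorial singularity of $h_{siu}$), i.e.\ a Zariski-type decomposition with semiample positive part — a much stronger hypothesis than the one given. Second, even granting $m_0\Delta=F$, the vanishing $R^qf_*\mathcal H=0$ for $q>n-\kappa$ requires \emph{all} fibres of $f$ to have dimension $\leq n-\kappa$, whereas the Iitaka fibration only controls the general fibre; special fibres can be larger, $R^qf_*\mathcal H$ can be a nonzero sheaf supported over the locus of big fibres, and then $H^0(Y,R^qf_*\mathcal H\otimes\mathcal O_Y(k))$ does not vanish for large $k$ — on the contrary, Serre makes it larger.

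The missing idea is that no vanishing of the high-power groups is needed at all: the paper argues by contradiction with growth rates. If $H^q(X,K_X\otimes L\otimes\mathcal I(h_{siu}))\neq 0$, the quantitative corollary of the injectivity theorem (Theorem~\ref{Cor 3.3}, applied with $M=L^{k-1}$, $h_M=h_{siu}^{k-1}$) gives
\begin{align*}
h^0(X,L^{k-1})=\dim H^0_{bdd,h_{siu}^{k-1}}(X,L^{k-1})\leq h^q\bigl(X,\mathscr O_X(K_X\otimes L^{k})\otimes\mathcal I(h_{siu}^{k})\bigr),
\end{align*}
the equality on the left being the analytic Zariski decomposition property of Siu's metric (every section of $L^{k-1}$ is $h_{siu}^{k-1}$-bounded). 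The right-hand side is $O(k^{n-q})$ by Theorem~\ref{main result} — this is exactly where the algebraic-singularity hypothesis is used, and it is the engine of the proof, whereas you relegate it to an aside covering only $q>n$ — while the left-hand side grows at least like $k^{\kappa(L)}$ along a subsequence by the definition of $\kappa(L)$. For $q>n-\kappa(L)$ these rates are incompatible, which is the contradiction; this is the same argument as for Theorem~\ref{vanishing}, with minimal singularities replaced by Siu's metric.
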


This paper is organized as follows: In Section 2, we recall some definitions and fundamental results which will be used.
In Section \ref{ddbar smooth}, we derive
$\partial\overline{\partial}$-formula for forms valued in vector
bundle on compact Hermitian manifolds including the case when the metric of the line
bundle is singular with local quasi-psh potential on compact
K\"{a}hler manifolds, which is a generalization of
Siu's $\partial\overline{\partial}$-formula on compact K\"{a}hler
manifolds, and therefore prove the Proposition \ref{ddbar formula 1}.
In Section \ref{Singular ddbar}, we  consider singular $\partial\overline{\partial}$-formula and
prove Proposition \ref{singular case 1}. In Section \ref{proof of vector bundle}, along the way
of Berndtsson, we generalize Berndtsson's eigenvalue
estimate of $\overline{\partial}$-Laplacian to the case of powers of
line bundle tensor with vector bundle and prove  Theorem \ref{vector bundle}. In Section \ref{proof}, we give the proof
of our main Theorem \ref{main result}. In Section \ref{application1}, we give a proof of
Theorem \ref{application-partial answer 1} which partially answers the question of Demailly. In Section \ref{application3}, we give the proof of two vanishing theorems, i.e.
Theorem \ref{vanishing1} and Theorem \ref{vanishing2}.

\section{Technical preliminaries }
\subsection{Algebraic singularities}
\begin{definition}\label{analytic singularity}
Let $L\rightarrow X$ be a holomorphic line bundle over a complex manifold $X$. Let $h=e^{-2\varphi}$ be
a singular metric of $L$. 

We say $h$ is a singular metric with analytic singularities if $\varphi$ is
a locally integrable function on $X$ which has locally the form
\begin{align}\label{AS}
\varphi=\frac{c}{2}\log(\sum_{j=1}^N|f_j|^2)+\psi,
\end{align}
 where $f_j$ are non-trivial holomorphic functions and $\psi$ is smooth and $c$ is a
 $\mathbb{R}_+$-valued, locally constant function on $X$. 
 
We call $h$ a singular metric
 with \textbf{algebraic singularities}, if  $c\in \mathbb{Q}_+$.
\end{definition}

\subsection{Multiplier ideal sheaves}\label{CAS}

\begin{definition}[cf. \cite{Dem10}]\label{BS} Let $L\rightarrow X$ be a holomorphic line bundle
over a complex manifold $X$. Let $h=e^{-2\varphi}$ be a singular metric of $L$ which has
analytic singularities, i.e., $\varphi$ locally has the form (\ref{AS}). Then
$\mathscr{I}=\mathscr{I}(\varphi/c)$  is defined to be the  ideal of germs of holomorphic functions
$h$ such that $|h|\leq Ce^{\varphi/c}$ for some constant $C$. i.e.
\begin{align*}
|h|\leq C(|f_1|+\cdots+|f_N|).
\end{align*}

This is a globally defined ideal sheaf on $X$, locally equal to the integral closure
$\overline{(f_1,\cdots,f_N)}$, and $\mathscr{I}$ is coherent on $X$. If $(g_1,\cdots,g_{N'})$ are
local generators of $\mathscr{I}$, we still have
\begin{align*}
\varphi=\frac{c}{2}\log(\sum_{j=1}^{N'}|g_j|^2)+\psi',
\end{align*}
where $\psi'$ is smooth.
\end{definition}

Usually, given a plurisubharmonic (psh for short) function $\varphi$, it is not easy to compute
the multiplier ideal sheaf $\mathcal{I}(\varphi)$. When $\varphi$ is with analytic singularities,
the following facts are collected from \cite{Dem10}.

\begin{itemize}
\item[(I)] If $\varphi$ has the form $\varphi=\sum\lambda_j\log|g_j|$ where $D_j=g^{-1}_j(0)$ are
nonsingular irreducible divisors with normal crossings. Then $\mathcal{I}(\varphi)$ is the
sheaf of functions $f$ on open sets $U\subset X$ such that
    \begin{align}
    \int_U|f|^2\prod|g_j|^{-2\lambda_j}dV<\infty.\notag
    \end{align}

Since locally the $g_i$ can be taken to the coordinate functions from a local coordinate system
$(z_1,\cdots, z_n)$, the condition is that $f$ is divisible by $\prod g_j^{m_j}$
where $m_j-\lambda_j>-1$ for each $j$, i.e. $m_j\geq \lfloor\lambda_j\rfloor$ (integral part). Hence
\begin{align}
\mathcal{I}(\varphi)=\mathcal{O}(-\lfloor D\rfloor)=\mathcal{O}(-\sum\lfloor\lambda_j\rfloor  D_j).\notag
\end{align}

\item[(II)]\label{MIS} For the general case of analytic singularities, suppose that
\begin{align}
\varphi\sim \frac{c}{2}\log(|f_1|^2+\cdots+|f_N|^2)\notag
\end{align}
near the poles. 

From Definition \ref{BS}, one can assume that the $(f_j)$'s are generators of
the integrally closed ideal sheaf $\mathscr{I}=\mathscr{I}(\varphi/c)$, defined as the
sheaf of holomorphic functions $f$ such that $|f|\leq C\exp(\varphi/c)$. 

There is a
smooth modification $\mu:\widetilde{X}\rightarrow X$ of $X$ such that $\mu^*\mathscr{I}$ is
an invertible sheaf $\mathcal{O}_{\widetilde{X}}(-D)$ associated with a normal crossing divisor
$D=\sum\lambda_jD_j$, where $(D_j)$ are the components of the exceptional divisor of
$\widetilde{X}$, and $\lambda_j\in \mathbb{N}_+$. 

Thus locally we have
\begin{align}
\varphi\circ\mu\sim c\sum\lambda_j\log|g_j|\notag
\end{align}
where $g_j$ are local generators of $\mathcal{O}(-D_j)$. So
\begin{align*}\mathcal{I}(\varphi\circ\mu)=  \mathcal{O}(-\sum\lfloor c\lambda_j\rfloor D_j),\end{align*}
and
\begin{align*}\mathcal{I}(\varphi)=\mu_*\mathcal{O}_{\widetilde{X}}
(\sum(\rho_j-\lfloor c\lambda_j\rfloor)D_j),
\end{align*} where
$R=\sum\rho_jD_j$ is the zero divisor of the Jacobian function $J_\mu$ of the modification map.
\end{itemize}

\subsection{Skoda's Lemma}
\begin{definition}
For a psh function $\varphi$ on an open set $\Omega\subset \mathbb{C}^n$, the Lelong number
of $\varphi$ at $x$ is defined to be
\begin{align*}
\nu(\varphi,x):=\liminf_{z\rightarrow x}\frac{\varphi(z)}{\log|z-x|}.
\end{align*}
\end{definition}

\begin{lemma}[\cite{Sko72}]\label{skoda}
Let $\varphi$ be a psh function on an open set $\Omega\subset \mathbb{C}^n$ and let $x\in \Omega$.
\begin{itemize}
\item[(i)] If $\nu(\varphi,x)<1$, then $e^{-2\varphi}$ is integrable in a neighborhood od $x$,
in particular, $\mathcal{I}(\varphi)_x=\mathcal{O}_{\Omega,x}$.
\item[(ii)] If $\nu(\varphi,x)\geq n+s$ for some integer $s\geq 0$, then $e^{-2\varphi}\geq C|z-x|^{-2n-2s}$
in a neighborhood of $x$ and $\mathcal{I}(\varphi)_x\subset\mathfrak{m}^{s+1}_{\Omega,x}$,
where $\mathfrak{m}^{s+1}_{\Omega,x}$ is the maximal ideal of $\mathcal{O}_{\Omega,x}$.
\end{itemize}
\end{lemma}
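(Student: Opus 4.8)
`\subsection*{Proof proposal for Lemma \ref{skoda}}

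The plan is to prove the two statements by reducing to the radially symmetric model and then applying standard comparison arguments for plurisubharmonic functions, exactly as in Skoda's original treatment. Throughout I may assume, after a translation, that $x=0$ and that $\varphi$ is defined on a ball $B(0,r_0)\subset\mathbb{C}^n$.

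For part (i): the key tool is the monotonicity of the mean values of a psh function on spheres together with the characterization of the Lelong number as $\nu(\varphi,0)=\lim_{r\to 0^+}\frac{\mu_\varphi(r)}{\log r}$, where $\mu_\varphi(r)$ denotes the mean of $\varphi$ over the sphere of radius $r$ (this limit exists and equals the $\liminf$ in the definition because $r\mapsto\mu_\varphi(r)$ is convex increasing in $\log r$). First I would fix $\nu(\varphi,0)<a<1$; then $\mu_\varphi(r)\geq a\log r - C$ for $r$ small, and hence $\varphi(z)\geq$ (a slightly worse) $a'\log|z|-C'$ after passing to a sub-mean-value control — more precisely, one invokes the comparison $\varphi(z)\le \mu_\varphi(|z|/(1-\epsilon))+\dots$ is the wrong direction, so instead I would directly estimate $\int_{B(0,\rho)}e^{-2\varphi}\,dV$ by slicing in polar coordinates and using that on each sphere $\int e^{-2\varphi}\,d\sigma\geq |S^{2n-1}|e^{-2\mu_\varphi(r)}$ by Jensen — again the wrong direction for an upper bound. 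The correct route is Kiselman's/H\"ormander's: if $\nu(\varphi,0)<1$ then $e^{-2\varphi}$ lies in $L^1$ near $0$ because one can compare $\varphi$ from below with $a\log|z|$ up to a harmonic (hence bounded plus pluriharmonic) correction using the Riesz representation of $\varphi$ as $\log$-potential of $\frac{1}{2\pi}dd^c\varphi$ plus a pluriharmonic term; the potential part contributes $\int \log|z-w|\,d\mu(w)$ with $\mu(B(0,r))\sim \nu(\varphi,0)r^{2n-2}$, and one checks $e^{-2\int\log|z-w|d\mu(w)}$ is integrable when the total relevant mass near $0$ is $<1$ by Fubini and the elementary fact that $|z|^{-2a}\in L^1_{loc}(\mathbb{C}^n)$ iff $a<n$, combined with a covering argument that spreads the mass. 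Since $|f|^2e^{-2\varphi}\le Ce^{-2\varphi}$ for any germ $f$, integrability of $e^{-2\varphi}$ gives $\mathcal{I}(\varphi)_0=\mathcal{O}_{\Omega,0}$.

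For part (ii): assume $\nu(\varphi,0)\geq n+s$. Again using $\mu_\varphi(r)\le \nu\log r$ is the wrong sign; the correct inequality is that convexity of $\mu_\varphi$ in $\log r$ together with $\mu_\varphi(r)/\log r\to\nu(\varphi,0)$ from above forces $\mu_\varphi(r)\le(n+s)\log r+O(1)$ for small $r$. From this mean-value bound and the sub-mean-value property one does \emph{not} directly get a pointwise lower bound, so the standard trick is: replace $\varphi$ by its rotational average $\tilde\varphi(z):=\mu_\varphi(|z|)$, which is still psh, satisfies $\tilde\varphi(z)\le(n+s)\log|z|+C$, and whose multiplier ideal contains that of... — actually the containment goes $\mathcal{I}(\varphi)\subset$? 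One has $\varphi\le$ (average over the orbit) is false; rather Kiselman's minimum principle gives that $z\mapsto\sup_{|\theta|=1}\varphi(\theta z)$ or an averaged version dominates $\varphi$, hence $e^{-2\varphi}\geq e^{-2\tilde\varphi}\geq C|z|^{-2(n+s)}$ — this is the claimed pointwise estimate. Then if $f\in\mathcal{I}(\varphi)_0$, $\int_{B(0,\rho)}|f|^2|z|^{-2n-2s}\,dV<\infty$; writing the Taylor expansion $f=\sum_{|\alpha|\le s}c_\alpha z^\alpha+O(|z|^{s+1})$ and noting that $\int|z^\alpha|^2|z|^{-2n-2s}\,dV$ diverges for every $|\alpha|\le s$ (it behaves like $\int_0 r^{2|\alpha|-2s-1}\,dr$), we conclude $c_\alpha=0$ for all $|\alpha|\le s$, i.e. $f\in\mathfrak{m}_{\Omega,0}^{s+1}$.

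The main obstacle, and the step deserving the most care, is getting the pointwise lower bound $e^{-2\varphi}\geq C|z|^{-2n-2s}$ in (ii) from the mean-value/Lelong-number hypothesis: a psh function is controlled pointwise only from above by its spherical means, so one must pass through the rotational symmetrization and invoke Kiselman's minimum principle (or an explicit Riesz-potential lower bound for the $\log$-potential of the Monge--Amp\`ere mass concentrated at $0$) to legitimately turn the mean estimate into a pointwise one; similarly in (i) the honest work is the Fubini/covering argument showing $L^1$-integrability of $e^{-2\varphi}$ from a mass bound $\nu(\varphi,0)<1$, which is where the sharp constant $1$ (rather than $n$) enters through the dimension count $|z|^{-2a}\in L^1_{loc}\iff a<n$ applied after concentrating the potential. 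Both of these are classical (H\"ormander, Skoda, Kiselman, Demailly), so I would cite \cite{Sko72} for the precise argument and only sketch the symmetrization and the Taylor-coefficient vanishing.`
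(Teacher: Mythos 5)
First, note that the paper does not prove Lemma \ref{skoda} at all: it is quoted verbatim from Skoda \cite{Sko72} as a preliminary, so there is no internal proof to compare with and your attempt has to be judged on its own. Judged that way, it has two genuine gaps. For part (i), your argument ultimately rests on representing $\varphi$ as the logarithmic potential $\int\log|z-w|\,d\mu(w)$ of $\mu=\frac{1}{2\pi}\Delta\varphi$ plus a pluriharmonic (bounded) term, and then applying Jensen's convexity and Fubini with the mass threshold $<1$ and the fact that $|z|^{-2a}\in L^1_{loc}(\mathbb{C}^n)$ iff $a<n$. But for $n\geq 2$ that representation is false: the fundamental solution of $\Delta$ in $\mathbb{C}^n$ is $-c_n|z-w|^{2-2n}$, not $\log|z-w|$, and a psh function is not bounded below by the log-potential of its Laplacian mass up to $O(1)$. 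Reducing to a logarithmic-kernel comparison (via slicing, projective masses, or H\"ormander--Bombieri $L^2$ methods) is precisely the nontrivial content of Skoda's proof, so as written the passage from $\nu(\varphi,x)<1$ to integrability of $e^{-2\varphi}$ is not established; the surrounding text also leaves several abandoned false starts in place, which should be removed.

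For part (ii), the detour through rotational symmetrization and Kiselman's minimum principle is both unnecessary and, as you yourself notice mid-argument, points the wrong way: the spherical mean centered at $x$ does not dominate $\varphi$ pointwise on that sphere, and Kiselman's principle does not repair this. The standard and elementary route is to use $M(r)=\sup_{|z-x|=r}\varphi(z)$, which is convex and increasing in $\log r$ and satisfies $\nu(\varphi,x)=\lim_{r\to 0^+}M(r)/\log r$; convexity of the slopes then gives $\varphi(z)\leq M(|z-x|)\leq \nu(\varphi,x)\log|z-x|+O(1)$ near $x$, hence $e^{-2\varphi}\geq C|z-x|^{-2n-2s}$ directly, with no symmetrization. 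Your final step is essentially right, but to conclude $f\in\mathfrak{m}^{s+1}_{\Omega,x}$ from $\int|f|^2|z-x|^{-2n-2s}\,dV<\infty$ one should argue with the lowest-order nonvanishing homogeneous part of $f$ (which is comparable to $|z-x|^{d}$ on a small subcone), rather than coefficient by coefficient, to avoid cross-term cancellation issues. With these two repairs the argument becomes the classical one, and in the context of the paper a citation to \cite{Sko72} (or Demailly's book) is all that is required.
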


\subsection{An isomorphism theorem of cohomology groups}

\begin{lemma}[cf. \cite{MM07}]\label{canonical sheaf-modification}
Let $\pi:\widetilde{X}\rightarrow X$ be the blow-up of $X$ with smooth center $Y\subset X$.
$L\rightarrow X$ be a holomorphic line bundle with singular metric $h_L$. Assume that in the
neighborhood of any point of the exceptional divisor $D$ of $\pi$, a local weight $\varphi$ of the metric
$h_L$ ($h_L=e^{-2\varphi}$) satisfies
\begin{align*}
\varphi\circ\pi=c\log|f|+\psi,
\end{align*}
for some $c>0$, $f$ is a local definition function of $D$ and $\psi$ is quasi-psh. Then for any
$p>1/c$ and $q\geq 0$, we have
\begin{align*}
H^q(\widetilde{X}, K_{\widetilde{X}}\otimes \pi^*(L^p\otimes E)\otimes
\mathcal{I}(\pi^*h_{L^p}))\simeq H^q(X,K_X\otimes L^p\otimes E\otimes \mathcal{I}(h_{L^p})),
\end{align*}
i.e.
\begin{align*}
H^{n,q}(\widetilde{X}, \pi^*(L^p\otimes E)\otimes \mathcal{I}(\pi^*h_{L^p}))
\simeq H^{n,q}(X, L^p\otimes E\otimes \mathcal{I}(h_{L^p})),
\end{align*}
\end{lemma}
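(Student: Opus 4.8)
\textbf{Proof proposal for Lemma \ref{canonical sheaf-modification}.}

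The plan is to reduce everything to the projection formula for the blow-up $\pi\colon\widetilde{X}\to X$ combined with a local computation of the direct image of the twisted multiplier ideal sheaf. First I would recall that for the modification $\pi$ one has the Grauert--Riemenschneider-type identity $\pi_*\mathcal{O}_{\widetilde{X}}(K_{\widetilde{X}})=\mathcal{O}_X(K_X)$ (indeed $K_{\widetilde{X}}=\pi^*K_X+R$ with $R$ the relative canonical / Jacobian divisor, effective and $\pi$-exceptional, so pushing forward $\mathcal{O}(R)$ gives $\mathcal{O}_X$), and more generally $R^j\pi_*\mathcal{O}_{\widetilde{X}}(K_{\widetilde{X}}+\pi^*F)=0$ for $j>0$ and any line bundle $F$ on $X$, by the relative vanishing theorem for modifications (the fibres of a blow-up along a smooth center are projective spaces, or one uses the local version of Takegoshi/Koll\'ar vanishing). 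The key extra input is that these vanishing and projection statements persist after twisting by the relevant multiplier ideal sheaves, which is exactly where the hypothesis on the local weight $\varphi\circ\pi=c\log|f|+\psi$ enters.

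Next I would carry out the local direct-image computation. Writing $h_{L^p}=e^{-2p\varphi}$, the hypothesis gives $\pi^*\varphi=c\log|f|+\psi$ with $\psi$ quasi-psh and $f$ a defining function for the exceptional divisor $D$, so on $\widetilde{X}$ we have $\mathcal{I}(\pi^*h_{L^p})=\mathcal{I}(p\,\pi^*\varphi)=\mathcal{O}_{\widetilde{X}}(-\lfloor pc\rfloor D)\cdot\mathcal{I}(p\psi)$ near $D$, and away from $D$ it simply equals $\pi^*\mathcal{I}(h_{L^p})$. The claim is that
\begin{align*}
\pi_*\bigl(\mathcal{O}_{\widetilde{X}}(K_{\widetilde{X}})\otimes\pi^*(L^p\otimes E)\otimes\mathcal{I}(\pi^*h_{L^p})\bigr)\;\simeq\;\mathcal{O}_X(K_X)\otimes L^p\otimes E\otimes\mathcal{I}(h_{L^p}),
\end{align*}
and that the higher direct images vanish. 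By the projection formula this reduces to the sheaf identity $\pi_*\bigl(\mathcal{O}_{\widetilde{X}}(K_{\widetilde{X}})\otimes\mathcal{I}(\pi^*h_{L^p})\bigr)=\mathcal{O}_X(K_X)\otimes\mathcal{I}(h_{L^p})$, which is precisely the statement recalled in item (II) of Subsection \ref{MIS} that $\mathcal{I}(\varphi)=\mu_*\mathcal{O}_{\widetilde{X}}\!\bigl(\sum(\rho_j-\lfloor c\lambda_j\rfloor)D_j\bigr)$; here the condition $p>1/c$ guarantees $pc>1$, i.e. $\lfloor pc\lambda_j\rfloor\ge 1$, so that the floor term genuinely subtracts at least the whole exceptional divisor and the canonical twist $R=\sum\rho_jD_j$ is what makes the push-forward collapse to $\mathcal{I}(h_{L^p})$ on the base. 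For the higher direct images I would note that $\mathcal{O}_{\widetilde{X}}(K_{\widetilde{X}})\otimes\mathcal{I}(\pi^*h_{L^p})=\mathcal{O}_{\widetilde{X}}(K_{\widetilde{X}})\otimes\mathcal{O}(-\lfloor pc\rfloor D)\otimes\mathcal{I}(p\psi)$, write $K_{\widetilde{X}}-\lfloor pc\rfloor D=\pi^*K_X+R-\lfloor pc\rfloor D$, and invoke the relative Nadel vanishing theorem (Ohsawa/Takegoshi, or Matsumura's version) for the nef-over-$X$ correction; since $\pi$ is an isomorphism outside $D$ and a projective bundle over the center, $R^j\pi_*$ of this sheaf vanishes for $j>0$.

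Finally I would conclude by feeding these into the Leray spectral sequence $E_2^{i,j}=H^i\bigl(X,R^j\pi_*(\cdots)\bigr)\Rightarrow H^{i+j}\bigl(\widetilde{X},K_{\widetilde{X}}\otimes\pi^*(L^p\otimes E)\otimes\mathcal{I}(\pi^*h_{L^p})\bigr)$: the vanishing of $R^j\pi_*$ for $j>0$ degenerates it at $E_2$, and the computation of $R^0\pi_*$ identifies the abutment with $H^q\bigl(X,K_X\otimes L^p\otimes E\otimes\mathcal{I}(h_{L^p})\bigr)$, giving the stated isomorphism; the two displayed forms in the lemma are just the same statement written with $K_X$ absorbed into the Dolbeault $(n,q)$ notation. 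The main obstacle I anticipate is the local direct-image identity with the canonical twist, i.e. making precise that $\pi_*\bigl(\mathcal{O}(K_{\widetilde{X}}-\lfloor pc\rfloor D)\otimes\mathcal{I}(p\psi)\bigr)=\mathcal{O}(K_X)\otimes\mathcal{I}(h_{L^p})$ rather than a smaller or larger ideal; this requires carefully tracking the discrepancies $\rho_j$ against the floors $\lfloor pc\lambda_j\rfloor$ exactly as in Subsection \ref{MIS}, and checking that the quasi-psh error term $\psi$ contributes nothing extra to the push-forward after the $K_{\widetilde{X}}$ twist. Everything else is a formal consequence of relative vanishing and the Leray spectral sequence.
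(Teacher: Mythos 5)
Your route---identify $R^{0}\pi_{*}$ via the change-of-variables functoriality of multiplier ideals, show $R^{j}\pi_{*}=0$ for $j>0$ by relative vanishing, and conclude with the Leray spectral sequence---is exactly the argument the paper has in mind: the paper gives no independent proof of Lemma \ref{canonical sheaf-modification}, citing \cite{MM07} and remarking that the lemma is a consequence of the Leray spectral theorem. The only (minor) imprecisions are that the factorization $\mathcal{I}(\pi^{*}h_{L^{p}})=\mathcal{O}_{\widetilde{X}}(-\lfloor pc\rfloor D)\cdot\mathcal{I}(p\psi)$ near $D$ is not literally exact in general, and that the push-forward identity you actually use, namely $\mathcal{I}(h_{L^{p}})=\pi_{*}\bigl(\mathcal{O}(K_{\widetilde{X}/X})\otimes\mathcal{I}(\pi^{*}h_{L^{p}})\bigr)$, holds without the hypothesis $p>1/c$, which instead serves to control the structure of the sheaf along $D$ in the higher-direct-image vanishing step.
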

\begin{rmk}
A real function is said to be quasi-psh, if it can be written as the  sum of psh function and a
smooth function locally. Lemma \ref{canonical sheaf-modification} is a consequence of Leray spectral theorem.
\end{rmk}

\subsection{Lebesgue decomposition of a current}\label{Lebesgue decomposition}

For a measure $\mu$ on a manifold $M$ we denote by $\mu_{ac}$ and $\mu_{sing}$ the uniquely
determined absolute continuous and singular measures (with respect to the Lebesgue measure on $M$) such that
\begin{align*}
\mu=\mu_{ac}+\mu_{sing}
\end{align*}
which is called the Lebesgue decomposition of the measure $\mu$. 

If $T$ is a $(1,1)$-current of order $0$ on $X$,
written locally $T=i\sum T_{ij}dz_i\wedge d\overline{z}_j$, we defines its absolute continuous and
singular components by
\begin{align*}
T_{ac}&=i\sum (T_{ij})_{ac}dz_i\wedge d\overline{z}_j,\\
T_{sing}&=i\sum (T_{ij})_{sing}dz_i\wedge d\overline{z}_j.
\end{align*}

The Lebesgue decomposition of $T$ is then
\begin{align*}
T=T_{ac}+T_{sing}.
\end{align*}

Note that a positive $(1,1)$-current $T\geq 0$ is of order $0$. If $T\geq 0$, it follows that $T_{ac}\geq 0$ and $T_{sing}\geq 0$.
Moreover, if $T\geq \alpha$ for a continuous $(1,1)$-form $\alpha$,
then $T_{ac}\geq \alpha$, $T_{sing}\geq 0$. 

It follows from the
Radon-Nikodym theorem that $T_{ac}$ is (the current associated to) a
$(1,1)$-form with $L^1_{loc}$ coefficients. The form $T_{ac}(x)^n$
exists for almost all $x\in X$ and is denoted by $T^n_{ac}$.

Note that $T_{ac}$ in general is not closed, even when $T$ is, so that the decomposition doesn't induce
a significant decomposition at the cohomological level. 

However, when $T$ is a closed positive $(1,1)$-current
with analytic singularities along a subscheme $V$, the residual part $R$ in Siu decomposition
(cf. \cite{Siu74}) of $T$ is nothing but $T_{ac}$, and $\sum_k\nu(T,Y_k)[Y_k]$ is $T_{sing}$.

\begin{rmk}\label{local decomposition}
Suppose now $X$ is a compact complex manifold of complex dimension $n$. $(L,h_L)$ be a pseudo-effective
line bundle over $X$, such that $h_L$ is a singular metric with analytic singularities. Then there is
a smooth modification $\mu:\widetilde{X}\rightarrow X$, such that $\mu^*\mathscr{I}$ is an
invertible sheaf $\mathcal{O}_{\widetilde{X}}(-D)$ associated with  a normal crossing divisor
$D=\sum\lambda_jD_j$, where $(D_j)$ are the components of the exceptional divisor of $\widetilde{X}$.
Now locally we can write
\begin{align*}
\mu^*\varphi=\varphi\circ\mu=c\log|s_{D}|+\widetilde{\psi},
\end{align*}
where $s_D$ is the canonical section of $\mathcal{O}_{\widetilde{X}}(-D)$,   and $\widetilde{\psi}$ is a
smooth potential. This implies that we have the following  Lebesgue decomposition
\begin{align}\label{MC}
\frac{i}{\pi}\mu^*\Theta_{h_L}=\frac{i}{\pi}\partial\overline{\partial}(\mu^*\varphi)=c[D]+\beta
\end{align}
where $[D]$ is the current of integration over $D$ and $\beta$ is a smooth closed $(1,1)$-form.  From the
pseudo-effectiveness, i.e.  $\frac{i}{\pi}\mu^*\Theta_{h_L}\geq 0$, we can conclude that $\beta\geq 0$.
\end{rmk}

\subsection{Regularity lemma}
\begin{lemma}[{\cite[Lemma 9.3]{ZGZ12}}]\label{regularity lemma}
Let $\varphi$ be a function in $L^1_{loc}(\Omega)$ such that $\Delta \varphi$ is a measure, where $\Omega$ is
a domain in $\mathbb{R}^m$ $(m\geq 2)$ and $\Delta=\sum_{j=1}^m\frac{\partial^2}{({\partial x^j})^2}$,
then $\frac{\partial}{\partial x^j}\varphi$ is a function in $L^1_{loc}(\Omega)$.
\end{lemma}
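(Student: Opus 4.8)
The plan is to establish the statement by a local potential-theoretic representation of $\varphi$. Fix a relatively compact subdomain $\Omega'\Subset\Omega$ and interpose a domain $\Omega'\Subset\Omega''\Subset\Omega$. Write $\mu:=\Delta\varphi$, which by hypothesis is a (signed) Radon measure on $\Omega$, and set $\nu:=\mathbf{1}_{\Omega''}\mu$; since $\overline{\Omega''}$ is a compact subset of $\Omega$, $\nu$ is a compactly supported signed measure of finite total variation. Let $E$ denote the fundamental solution of the Laplacian on $\mathbb{R}^m$ (so $E(x)=c_m|x|^{2-m}$ for $m\geq 3$ and $E(x)=\tfrac{1}{2\pi}\log|x|$ for $m=2$), and put $\psi:=E*\nu$. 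Because $E\in L^1_{loc}(\mathbb{R}^m)$ and $\nu$ is finite with compact support, $\psi$ is a well-defined element of $L^1_{loc}(\mathbb{R}^m)$ with $\Delta\psi=\nu$ in the distributional sense; in particular $\Delta\psi=\mu$ on $\Omega''$.

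Consequently $\Delta(\varphi-\psi)=0$ as a distribution on $\Omega''$, and since $\varphi-\psi\in L^1_{loc}(\Omega'')$, Weyl's lemma shows that $\varphi-\psi$ coincides a.e.\ on $\Omega''$ with a harmonic, hence $C^\infty$, function $h$. Therefore $\varphi=\psi+h$ a.e.\ on $\Omega''$, and since $\partial_j h$ is smooth it suffices to prove $\partial_j\psi\in L^1(\Omega')$. As $\nu$ has compact support, differentiation commutes with the convolution, so $\partial_j\psi=(\partial_j E)*\nu$ in $\mathcal D'(\mathbb{R}^m)$. The decisive elementary fact is the pointwise bound $|\partial_j E(x)|\leq C|x|^{1-m}$, valid in every dimension, which together with $m\geq 2$ gives $\partial_j E\in L^1_{loc}(\mathbb{R}^m)$ since $\int_{B_1}|x|^{1-m}\,dx=C\int_0^1 dr<\infty$. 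Choosing $R$ so large that $\Omega'-\operatorname{supp}\nu\subset B_R$, we have $(\partial_j E)*\nu=\big((\partial_j E)\mathbf{1}_{B_R}\big)*\nu$ on $\Omega'$, and by Tonelli's theorem
\begin{align*}
\int_{\Omega'}\big|\big((\partial_j E)\mathbf{1}_{B_R}\big)*\nu\big|\,dx\;\leq\;\big\|(\partial_j E)\mathbf{1}_{B_R}\big\|_{L^1(\mathbb{R}^m)}\cdot|\nu|(\mathbb{R}^m)\;<\;\infty .
\end{align*}
Hence $\partial_j\psi\in L^1(\Omega')$, so $\partial_j\varphi\in L^1(\Omega')$; since $\Omega'\Subset\Omega$ was arbitrary, $\partial_j\varphi\in L^1_{loc}(\Omega)$.

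The step I expect to require the most care — everything else being standard real analysis — is the passage from "$\Delta(\varphi-\psi)=0$ distributionally" to "$\varphi-\psi$ equals a genuine smooth function", i.e.\ applying Weyl's lemma in the correct form for $L^1_{loc}$ distributions (via mollification of $\varphi-\psi$ and the mean value property), together with the bookkeeping needed to see that convolving an $L^1_{loc}$ kernel against a finite compactly supported signed measure stays locally integrable; passing to the Jordan decomposition $\mu=\mu^+-\mu^-$ reduces the last point to positive measures if one prefers. Alternatively one could bypass the explicit representation and argue directly that $\varphi\in W^{1,p}_{loc}(\Omega)$ for every $p<m/(m-1)$ using the weak-type bound $\nabla E\in L^{m/(m-1),\infty}$, which yields a conclusion strictly stronger than the one claimed.
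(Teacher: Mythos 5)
Your argument is correct. Note that the paper itself contains no proof of this lemma---it is quoted directly from \cite[Lemma 9.3]{ZGZ12}---and your proof is the standard potential-theoretic one that the cited source also follows: localize to a relatively compact subdomain, write $\varphi=E*\nu+h$ with $\nu$ the (finite, compactly supported) restriction of the measure $\Delta\varphi$ and $h$ harmonic by Weyl's lemma, and then use $|\partial_jE(x)|\leq C|x|^{1-m}\in L^1_{loc}(\mathbb{R}^m)$ together with Tonelli to see that $(\partial_jE)*\nu\in L^1$ near the subdomain. The individual steps you flag as delicate are indeed the right ones to check, and they all go through: $|\Delta\varphi|(\overline{\Omega''})<\infty$ because a Radon measure has finite mass on compacts, and $\partial_j(E*\nu)=(\partial_jE)*\nu$ because $E\in W^{1,1}_{loc}(\mathbb{R}^m)$, so its pointwise gradient is also its distributional gradient.
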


\begin{rmk} If $\varphi$ is  a quasi-psh function, then $\varphi$ is in $L^1_{loc}$ and by
Lemma \ref{regularity lemma} $\nabla\varphi$ is also in  $L^1_{loc}$.
\end{rmk}

\section{$\partial\overline{\partial}$-formula for vector bundle valued forms}\label{ddbar smooth}

Let $(E,h_{\alpha\overline{\beta}})$ be a Hermitian holomorphic vector bundle of rank $r$  over an
$n$-dimensional compact Hermitian manifold $(X, \omega)
$. Let $(L, h_L=e^{-\psi}:=e^{-2\varphi})$ be a Hermitian holomorphic line bundle over $X$. 

The curvature form
\begin{align*}
\Theta_{\alpha\overline{\beta}}=-\sqrt{-1}\sum_{i,j}\Omega_{\alpha\overline{\beta}i\overline{j}}dz^i
\wedge dz^{\overline{j}}
\end{align*}
of $E$ is given by
\begin{align*}
\Omega_{\alpha\overline{\beta}i\overline{j}}=\partial_i\partial_{\overline{j}}
h_{\alpha\overline{\beta}}-h^{\lambda\overline{\mu}}\partial_ih_{\alpha\overline{\mu}}
\partial_{\overline{j}}h_{\lambda\overline{\beta}}.
\end{align*}

Let $D'$ be the $(1,0)$-part of the  Chern connection associated to the vector bundle $E\otimes L$ with respect
to the Hermitian metrics of $E$ and $L$. Then one have the following formula
\begin{align}\label{curvature}
D'\overline{\partial}+\overline{\partial}D'=\Theta_{E\otimes L},
\end{align}
and
\begin{align}\label{debar star}
\overline{\partial}^*=-*D'*.
\end{align}
For the detailed computations, we refer to \cite{Kod}.

Let
\begin{align*}
u^\alpha=\frac{1}{p!q!}\sum u^\alpha_{I_p\overline{J}^q}dz^{I_p}\wedge dz^{\overline{J}^q}
\end{align*}
be an $E$-valued $(p,q)$-form on $X$.

Let $u$ be an $E\otimes L$-valued $(n,q)$-form,  we define an associated $(n-q,n-q)$-form which
 in a local trivialization is written as
 \begin{align}\label{Tu}
 T_u=c_{n-q}h_{\alpha\overline{\beta}}\gamma^\alpha\wedge\overline{\gamma^\beta}e^{-\psi}
 \end{align}
 where $\gamma^\alpha=*u^\alpha$, $c_{n-q}=i^{(n-q)^2}$. 
 
 Here $*$ denote the Hodge operator of
 the Hermitian manifold $X$, defined by the formula
 \begin{align*}
 \xi\wedge \overline{*\xi}=|\xi|^2\omega_n
 \end{align*}
 where $\xi $ is a $(p,q)$-form on $X$. 
 
The relation $*u^\alpha=\gamma^\alpha$ can be expressed as
 \begin{align*}
u^\alpha=c_{n-q}\gamma^\alpha\wedge \omega_q,
 \end{align*}
and moreover we have
\begin{align*}
*\gamma^\alpha=(-1)^{n-q}c_{n-q}\gamma^\alpha\wedge \omega_q.
\end{align*}

In the following  we shall also use the relations $ic_q=(-1)^qc_{q-1}$ and $c_{q-1}=c_{q+1}$.

By direct computation, we get that
\begin{align*}
\overline{\partial}(h_{\alpha\overline{\beta}}\gamma^\alpha\wedge\overline{\gamma^\beta}e^{-\psi})=
h_{\alpha\overline{\beta}}(\overline{\partial}\gamma^\alpha\wedge
\overline{\gamma^\beta}+(-1)^{n-q}\gamma^\alpha\wedge\overline{D'\gamma^\beta})e^{-\psi}
\end{align*}
And then
\begin{align*}
&\partial\overline{\partial}(h_{\alpha\overline{\beta}}\gamma^\alpha\wedge\overline{\gamma^\beta}e^{-\psi})=
h_{\alpha\overline{\beta}}(D'\overline{\partial}\gamma^\alpha\wedge\overline{\gamma^\beta}+(-1)^{n-q+1}
\overline{\partial}\gamma^\alpha\wedge\overline{\overline{\partial}\gamma^\beta}+\\
&+(-1)^{n-q}D'\gamma^\alpha\wedge\overline{D'\gamma^\beta}+\gamma^\alpha\wedge
\overline{\overline{\partial}D'\gamma^\beta})e^{-\psi}.
\end{align*}

By using the commutator formula (\ref{curvature}), we can get that

\begin{align}\label{ddbar term}
&\partial\overline{\partial}(h_{\alpha\overline{\beta}}\gamma^\alpha\wedge\overline{\gamma^\beta}e^{-\psi})=
h_{\alpha\overline{\beta}}(\Theta_{E\otimes L}\wedge\gamma^\alpha\wedge
\overline{\gamma^\beta}-\overline{\partial}D'\gamma^\alpha\wedge\overline{\gamma^\beta}+\\
&+(-1)^{n-q+1}\overline{\partial}\gamma^\alpha\wedge\overline{\overline{\partial}\gamma^\beta}+
(-1)^{n-q}D'\gamma^\alpha\wedge\overline{D'\gamma^\beta}+\gamma^\alpha\wedge
\overline{\overline{\partial}D'\gamma^\beta})e^{-\psi}.
\notag\end{align}

By the formula (\ref{debar star}), we have that
\begin{align*}
\overline{\partial}^*u^\alpha=-*D'\gamma^\alpha=(-1)^{n-q}c_{n-q-1}D'\gamma^\alpha\wedge\omega_{q-1},
\end{align*}
which implies that
\begin{align}\label{dbardbarstar}
\overline{\partial}\overline{\partial}^* u^\alpha&=
(-1)^{n-q}c_{n-q-1}(\overline{\partial}D'\gamma^\alpha\wedge \omega_{q-1}+
(-1)^{n-q-1}D'\gamma^\alpha\wedge\overline{\partial}\omega_{q-1})\\
&=(-1)^{n-q}c_{n-q-1}(\overline{\partial}D'\gamma^\alpha\wedge \omega_{q-1}+
O(|\overline{\partial}^*u^\alpha||\overline{\partial}\omega_{q-1}|)).\notag
\end{align}

In complex geometry, $\partial\omega$ is called the torsion form of $\omega$ (the operator
$\tau:=[\Lambda,\partial\omega]$ is called the torsion operator), one can see that
\begin{align*}
\overline{\partial}\omega_{q-1}=\overline{\partial}\omega\wedge\omega_{q-2},
\end{align*}
where the torsion comes into the game. 

If the Hermitian metric $\omega$ is K\"{a}hler (i.e. $d\omega=0$),
this term disappears.

Multiply (\ref{ddbar term}) by $ic_{n-q}\omega_{n-q}$, we have five terms. By (\ref{dbardbarstar}),
the second term in (\ref{ddbar term}) equals
\begin{align*}
-h_{\alpha\overline{\beta}}\overline{\partial}D'\gamma^\alpha\wedge\overline{\gamma^\beta}e^{-\psi}=
-h_{\alpha\overline{\beta}}\overline{\partial}\overline{\partial}^* u^\alpha\wedge
\overline{\gamma^\beta}e^{-\psi}=-\langle \overline{\partial}\overline{\partial}^* u,u\rangle\omega_n
\end{align*}
up to an error of size $O(|\overline{\partial}^*u||\overline{\partial}\omega_{q-1}||u|)$. 

Since
the entire expression is real, the fifth term must be the conjugate of the second one, so these terms together give
\begin{align*}
-2Re\langle \overline{\partial}\overline{\partial}^* u,u\rangle\omega_n.
\end{align*}

The first term is the curvature term
\begin{align*}
\langle \Theta_{E\otimes L}\wedge \Lambda u,u\rangle\omega_n=
\langle (\Theta_{E}+\Theta_L\otimes\mathds{1}_E)\wedge \Lambda u,u\rangle\omega_n.
\end{align*}

Checking signs, we see that the forth term equals
\begin{align*}
|\overline{\partial}^*u|^2\omega_n,
\end{align*}
so it only need to analyse the third term. 

Consider the bilinear form on $E\otimes L$ valued $(n-q,1)$-forms defined by
\begin{align}\label{bilinear form 1}
[\chi,\eta]\omega_n=ic_{n-q}(-1)^{n-q+1}h_{\alpha\overline{\beta}}\chi^\alpha
\wedge\overline{\eta^{\beta}}\wedge \omega_{q-1}=
-c_{n-q+1}h_{\alpha\overline{\beta}}\chi^\alpha\wedge\overline{\eta^{\beta}}\wedge \omega_{q-1}.
\end{align}

Fix an arbitrary point $x\in X$, we can choose a good coordinate chart $(U,z)$ centered at $x$ and a
good trivialization of $E$ and $L$ such that $\omega=\frac{i}{2}\partial\overline{\partial}|z|^2$,
$h_{\alpha\overline{\beta}}=\delta_{\alpha\overline{\beta}}$ and $\psi=0$ at $x$. Then the bilinear form
(\ref{bilinear form 1}) at $x$ reads
\begin{align*}
[\chi,\eta]\omega_n=-c_{n-q+1}\sum_{\alpha=1}^r\chi^\alpha\wedge\overline{\eta^{\alpha}}\wedge \omega_{q-1}.
\end{align*}

For each $\alpha\in \{1,2, \cdots, r\}$, we consider
\begin{align}\label{bilinear form 2}
[\chi,\eta]\omega_n=-c_{n-q+1}\chi^\alpha\wedge\overline{\eta^{\alpha}}\wedge \omega_{q-1}.
\end{align}

It is proved in \cite{Ber02} that at $x$
\begin{itemize}
\item the form (\ref{bilinear form 2}) is negative definite on the subspace $V_\alpha$ forms that
can be written $\chi^\alpha=\chi^\alpha_0\wedge\omega$ ( it then equals a negative multiple of the norm square
of $\chi_0$),
\item the annihilator $V_\alpha^o$ of $V_\alpha$ with respect to $[,]$ in (\ref{bilinear form 2}),
consists  precisely of forms satisfying $\chi^\alpha\wedge \omega_q=0$ , and $V_\alpha\cap V_\alpha^o=\{0\}$,
\item  the form  (\ref{bilinear form 2}) is positive definite on $V^o_\alpha$,
\item  any $(n-q,1)$-form $\chi^\alpha$ can be decomposed uniquely
\begin{align*}
\chi^\alpha=\chi^\alpha_1+\chi^\alpha_0\wedge\omega
\end{align*}
with $\chi^\alpha_1\in V_\alpha^o$.
\end{itemize}

Since the point $x\in X$ is arbitrarily chosen, it follows that any $E\otimes L$-valued $(n-q,1)$-form
 $\chi$  can be decomposed uniquely
\begin{align*}
\chi=\chi_1+\chi_0\wedge\omega
\end{align*}
with $\chi_1\in V^o$, where $V^o$ is the annihilator of $V$ with respect to $[,]$ in (\ref{bilinear form 1})
and $V$ is the subspace of $E\otimes L$-valued $(n-q,1)$ forms on which the bilinear form $[,]$
in (\ref{bilinear form 1}) is negative definite. 

Now let $\chi=\overline{\partial}\gamma$. Since
$u=c_{n-q}\gamma\wedge\omega_q$ is $\overline{\partial}$-closed, we have that
\begin{align}
\overline{\partial}\gamma\wedge\omega_q=(-1)^{n-q-1}\gamma\wedge\overline{\partial}\omega_q.\label{dbar alpha}
\end{align}

Decomposing $\overline{\partial}\gamma=\chi_1+\chi_0\wedge\omega$ and plugging into (\ref{dbar alpha}), we have
\begin{align*}
\chi_0\wedge\omega\wedge\omega_q=(-1)^{n-q-1}\gamma\wedge\overline{\partial}\omega_q
.
\end{align*}

Since $\chi_0$ is of bidegree $(n-q-1,0)$, this means that
$|\chi_0|=O(|\gamma||\overline{\partial}\omega_q|)=O(|u||\overline{\partial}\omega_q|)$.
This means that the only possible  negative contribution of $[\overline{\partial}\gamma,\overline{\partial}\gamma]$
can be estimated by $c|u|^2$. If we also estimate the earlier error term
\begin{align*}
O(|\overline{\partial}^*u||\overline{\partial}\omega_{q-1}||u|)
\leq C_\varepsilon|u|^2+\varepsilon |\overline{\partial}^*u|^2,
\end{align*}
and collect all the terms and note that $\overline{\partial}\overline{\partial}^*u=\Box u$
if $u$ is $\overline{\partial}$-closed,  we get that
\begin{align*}
i\partial\overline{\partial}T_u \wedge\omega_{q-1}\geq(-2Re\langle\Box u,u\rangle+
\langle\Theta_{E\otimes L}\wedge\Lambda u,u\rangle+
(1-\varepsilon)|\overline{\partial}^*u|^2-C_\varepsilon|u|^2)\omega_n.
\end{align*}

Thus we get the following
\begin{prop}\label{ddbar formula}Let $(X,\omega)$ be a compact Hermitian manifold with
Hermitian metric $\omega$, $E$ and $L$ be holomorphic vector bundle of rank $r$ and holomorphic
line bundle respectively. Let $u$ be an $E\otimes L$-valued $(n,q)$-form. If $u$ is
$\overline{\partial}$-close, the following inequality holds
\begin{align*}
i\partial\overline{\partial}T_u\wedge\omega_{q-1}\geq(-2Re\langle\Box u,u\rangle+
\langle\Theta_{E\otimes L}\wedge\Lambda u,u\rangle-c|u|^2)\omega_n.
\end{align*}
The constant $c$ is equal to zero if $\overline{\partial}\omega_{q-1}=\overline{\partial}\omega_q=0$,
hence in particular if $\omega$ is K\"{a}hler.
\end{prop}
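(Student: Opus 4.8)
The plan is to establish the inequality by a direct local computation of $i\partial\overline{\partial}(T_u\wedge\omega_{q-1})$, in the spirit of \cite{Ber02} but carrying the extra curvature indices of $E$ and the torsion of $\omega$. Fix $x\in X$, choose coordinates and unitary frames with $\omega=\frac{i}{2}\partial\overline{\partial}|z|^2$, $h_{\alpha\overline{\beta}}=\delta_{\alpha\overline{\beta}}$, $\psi=0$ at $x$, and write $T_u=c_{n-q}h_{\alpha\overline{\beta}}\gamma^\alpha\wedge\overline{\gamma^\beta}e^{-\psi}$ with $\gamma^\alpha=*u^\alpha$. Since $\overline{\partial}$ and $D'$ are the $(0,1)$- and $(1,0)$-parts of the Chern connection of $E\otimes L$, metric compatibility lets me differentiate $h_{\alpha\overline{\beta}}\gamma^\alpha\wedge\overline{\gamma^\beta}e^{-\psi}$ componentwise; applying $\overline{\partial}$ and then $\partial$ yields four terms, and substituting the curvature identity $D'\overline{\partial}+\overline{\partial}D'=\Theta_{E\otimes L}$ from (\ref{curvature}) turns one of them into the curvature term and leaves $-\overline{\partial}D'\gamma^\alpha\wedge\overline{\gamma^\beta}$, its conjugate, $(-1)^{n-q}D'\gamma^\alpha\wedge\overline{D'\gamma^\beta}$, and $(-1)^{n-q+1}\overline{\partial}\gamma^\alpha\wedge\overline{\overline{\partial}\gamma^\beta}$, i.e.\ (\ref{ddbar term}).

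Next I would wedge with $ic_{n-q}\omega_{n-q}$ and translate each term into a pointwise inner product. From $\overline{\partial}^*=-*D'*$ in (\ref{debar star}) one gets $\overline{\partial}^*u^\alpha=(-1)^{n-q}c_{n-q-1}D'\gamma^\alpha\wedge\omega_{q-1}$, whence the square term contributes exactly $|\overline{\partial}^*u|^2\omega_n$, while $-\overline{\partial}D'\gamma^\alpha\wedge\overline{\gamma^\beta}$ becomes $-\langle\overline{\partial}\overline{\partial}^*u,u\rangle\omega_n$ up to an error $O(|\overline{\partial}^*u|\,|\overline{\partial}\omega_{q-1}|\,|u|)$ produced by $\overline{\partial}$ hitting $\omega_{q-1}$, as in (\ref{dbardbarstar}); reality of the whole expression forces its conjugate to add, giving $-2\mathrm{Re}\langle\overline{\partial}\overline{\partial}^*u,u\rangle\omega_n$. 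This error will be absorbed by Young's inequality into $\varepsilon|\overline{\partial}^*u|^2+C_\varepsilon|u|^2$ and it vanishes when $\overline{\partial}\omega_{q-1}=0$. The first term is, up to sign, $\langle\Theta_{E\otimes L}\wedge\Lambda u,u\rangle\omega_n$.

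The main obstacle is the a priori unsigned cross term $(-1)^{n-q+1}\overline{\partial}\gamma^\alpha\wedge\overline{\overline{\partial}\gamma^\beta}\wedge\omega_{q-1}$. I would dispose of it by the pointwise linear algebra of \cite{Ber02}: at $x$, componentwise, the form $[\chi,\eta]\omega_n=-c_{n-q+1}h_{\alpha\overline{\beta}}\chi^\alpha\wedge\overline{\eta^\beta}\wedge\omega_{q-1}$ on $E\otimes L$-valued $(n-q,1)$-forms is negative definite on the subspace $V$ of forms $\chi_0\wedge\omega$, positive definite on its $[\,,\,]$-annihilator $V^o=\{\chi:\chi\wedge\omega_q=0\}$, and $V\oplus V^o$ is the whole space. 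Decomposing $\overline{\partial}\gamma=\chi_1+\chi_0\wedge\omega$ with $\chi_1\in V^o$, the only possibly negative contribution is $[\chi_0\wedge\omega,\chi_0\wedge\omega]$; but $\overline{\partial}$-closedness of $u=c_{n-q}\gamma\wedge\omega_q$ gives $\overline{\partial}\gamma\wedge\omega_q=(-1)^{n-q-1}\gamma\wedge\overline{\partial}\omega_q$ as in (\ref{dbar alpha}), hence $\chi_0\wedge\omega\wedge\omega_q=(-1)^{n-q-1}\gamma\wedge\overline{\partial}\omega_q$, and since $\chi_0$ has bidegree $(n-q-1,0)$ this forces $|\chi_0|=O(|u|\,|\overline{\partial}\omega_q|)$; so the negative part of the cross term is at most $c|u|^2$ with $c$ depending only on $|\overline{\partial}\omega_q|$ and $|\overline{\partial}\omega_{q-1}|$, and $c=0$ if $\overline{\partial}\omega_q=\overline{\partial}\omega_{q-1}=0$.

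Finally I would assemble the pieces: the curvature term, $-2\mathrm{Re}\langle\overline{\partial}\overline{\partial}^*u,u\rangle\omega_n$, the non-negative $(1-\varepsilon)|\overline{\partial}^*u|^2\omega_n$, the non-negative term $[\chi_1,\chi_1]\ge 0$ (written $|\overline{\partial}\gamma_u|^2\omega_n$ in (\ref{bochner formula})), and the error $-(C_\varepsilon+c)|u|^2\omega_n$. Dropping the non-negative leftovers and using $\overline{\partial}\overline{\partial}^*u=\Box u$ because $\overline{\partial}u=0$ gives the stated inequality; when $\overline{\partial}\omega_{q-1}=\overline{\partial}\omega_q=0$ (in particular in the K\"ahler case) all errors disappear and one obtains the exact identity (\ref{bochner formula}). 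Beyond this cross-term point, the remaining difficulty is purely bookkeeping: keeping track of the constants $c_{n-q}$, $c_{n-q\pm1}$, the signs $(-1)^{n-q}$, and the various torsion contributions consistently.
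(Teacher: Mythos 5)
Your proposal is correct and follows essentially the same route as the paper: the same local computation of $i\partial\overline{\partial}(T_u\wedge\omega_{q-1})$ using the commutator identity (\ref{curvature}) and (\ref{debar star}), the same identification of the terms with $-2\mathrm{Re}\langle\overline{\partial}\overline{\partial}^*u,u\rangle$, the curvature term and $|\overline{\partial}^*u|^2$ up to torsion errors absorbed by Young's inequality, and the same treatment of the cross term via Berndtsson's pointwise bilinear form and the decomposition $\overline{\partial}\gamma=\chi_1+\chi_0\wedge\omega$ with $|\chi_0|=O(|u|\,|\overline{\partial}\omega_q|)$ from $\overline{\partial}$-closedness. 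No gaps; this matches the argument in Section~\ref{ddbar smooth}.
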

\begin{rmk}\label{bochner}
Actually, when $d\omega=0$, we can get the following identity for smooth $E\otimes L$-valued $(n,q)$-form $u$.
\begin{align*}
i\partial\overline{\partial}T_u\wedge\omega_{q-1}=(-2Re\langle\overline{\partial}
\overline{\partial }^* u,u\rangle+\langle\Theta_{E\otimes L}\wedge\Lambda u,u\rangle+
|\overline{\partial}\gamma_u|^2+|\overline{\partial}^*u|^2-|\overline{\partial}u|^2)\omega_n.
\end{align*}
\end{rmk}
\begin{proof}To get the equality, we only need to analyse the term
\begin{align*}
(-1)^{n-q+1}h_{\alpha\overline{\beta}}\overline{\partial}\gamma^\alpha\wedge\overline
{\overline{\partial}\gamma^\beta}e^{-\psi}.
\end{align*}
We need the following Lemma which is a variant of Lemma 4.2 in \cite{Ber10}.
\begin{lemma}\label{polarization lemma}Let $\xi$ be a $E\otimes L$ valued $(n-q,1)$ form. Then
\begin{align*}
ic_{n-q}(-1)^{n-q-1}h_{\alpha\overline{\beta}}\xi_\alpha\wedge\overline{\xi_\beta}
\wedge\omega_{q-1}=(|\xi|^2-|\xi\wedge\omega_q|^2)\omega_n.
\end{align*}
\end{lemma}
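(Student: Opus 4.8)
The plan is to prove this as a pointwise identity of $(n,n)$-forms which depends Hermitian-quadratically on $\xi$; by polarization it suffices to evaluate both sides at an arbitrary point $x\in X$ after choosing, exactly as in the computations above, a coordinate chart and trivializations of $E,L$ in which $\omega(x)$ is the standard K\"ahler form and $h_{\alpha\overline\beta}(x)=\delta_{\alpha\overline\beta}$. Both sides then split into a sum over $\alpha=1,\dots,r$, so the problem reduces to the case of a single \emph{scalar} $(n-q,1)$-form $\xi$ on a Hermitian vector space. Here one should observe that, since $(-1)^{n-q-1}=(-1)^{n-q+1}$, the left-hand side of the lemma is exactly $[\xi,\xi]\,\omega_n$ in the notation of the bilinear form (\ref{bilinear form 1}); so the content of the statement is the identity $[\xi,\xi]=|\xi|^2-|\xi\wedge\omega_q|^2$.

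Next I would invoke the orthogonal decomposition $\xi=\xi_1+\xi_0\wedge\omega$ recalled just before the statement, where $\xi_0$ is an $(n-q-1,0)$-form and $\xi_1$ satisfies $\xi_1\wedge\omega_q=0$, i.e. $\xi_1$ is primitive (equivalently $\Lambda\xi_1=0$, because a form of degree $n-q+1$ is primitive precisely when wedging it with $\omega_q$ vanishes). By construction $\xi_1\in V^o$ annihilates $\xi_0\wedge\omega\in V$ with respect to $[\,,\,]$, and since $\Lambda\xi_1=0$ one also gets $\langle\xi_1,\xi_0\wedge\omega\rangle=\langle\Lambda\xi_1,\xi_0\rangle=0$. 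Hence the cross terms on both sides vanish, and it suffices to verify the identity separately for $\xi=\xi_1$ and for $\xi=\xi_0\wedge\omega$, using $|\xi|^2=|\xi_1|^2+|\xi_0\wedge\omega|^2$ and $\xi\wedge\omega_q=\xi_0\wedge\omega\wedge\omega_q$.

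For the primitive piece the right-hand side is simply $|\xi_1|^2\omega_n$, and on the left I would apply the Hodge--Weil identity, which expresses $\overline{*\xi_1}$ as an explicit constant multiple of $\omega_{q-1}\wedge\overline{\xi_1}$ for a primitive $(n-q,1)$-form; substituting this into $\xi_1\wedge\overline{*\xi_1}=|\xi_1|^2\omega_n$ gives $[\xi_1,\xi_1]=|\xi_1|^2$ once the constant is checked against $ic_{n-q}(-1)^{n-q-1}$. For the piece $\xi=\xi_0\wedge\omega$ I would instead compute all three quantities $\xi_0\wedge\omega\wedge\overline{\xi_0\wedge\omega}\wedge\omega_{q-1}$, $|\xi_0\wedge\omega|^2$ and $|\xi_0\wedge\omega\wedge\omega_q|^2$ directly in the flat model, expanding $\omega=i\sum e^j\wedge\overline{e^j}$ in a unitary coframe (one may first reduce to $\xi_0$ primitive by bilinearity if convenient); the combinatorial identity relating these three expressions is then elementary, and adding the two pieces yields the lemma. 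The only real obstacle is bookkeeping: tracking the signs $(-1)^{n-q\pm1}$, the powers of $i$ packaged in $c_{n-q}$, and the normalizing factorials implicit in $\omega_{q-1},\omega_q,\omega_n$, so that the constants coming out of the Hodge--Weil step and out of the flat-model computation agree with the one required by (\ref{bilinear form 1}); a fully explicit orthonormal-coframe expansion of both sides is an alternative that trades this conceptual organization for a longer but entirely mechanical check.
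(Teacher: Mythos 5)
Your argument is correct, and its first half coincides with the paper's proof: the paper likewise observes that the identity is pointwise and Hermitian-quadratic, chooses normal coordinates and a normal trivialization so that $\omega(x_0)=\sum dz_i\wedge d\bar z_i$ and $h_{\alpha\overline{\beta}}(x_0)=\delta_{\alpha\beta}$, and thereby reduces to the scalar case. The difference is what happens next: at that point the paper simply quotes Lemma 4.2 of \cite{Ber10} and stops, whereas you go on to prove the scalar identity yourself, via the Lefschetz decomposition $\xi=\xi_1+\xi_0\wedge\omega$ (with $\xi_1$ primitive, i.e. $\xi_1\wedge\omega_q=0$, and $\xi_0$ of bidegree $(n-q-1,0)$, hence automatically primitive), the vanishing of cross terms on both sides (using $\langle\xi_1,\omega\wedge\xi_0\rangle=\langle\Lambda\xi_1,\xi_0\rangle=0$ and the $[\,,\,]$-annihilation built into (\ref{bilinear form 1})), the Hodge--Weil formula for the primitive piece, and a flat-model computation for the $\xi_0\wedge\omega$ piece. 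All of these steps are sound — your identification of the left-hand side with $[\xi,\xi]\,\omega_n$ via $(-1)^{n-q-1}=(-1)^{n-q+1}$ is right, and the piecewise identities do hold (on the $V$-piece both sides reduce to $-q(q+1)|\xi_0|^2\omega_n$) — so the only remaining work is the constant bookkeeping you explicitly flag. The trade-off is clear: the paper's proof is shorter but rests on an external reference, while yours is self-contained and makes visible exactly where the primitive decomposition enters; it is essentially an inlined proof of the cited lemma rather than a genuinely different strategy.
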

\begin{proof}First we observe that  the identity is pointwise. Fix arbitrary point $x_0\in X$,  we can
choose normal coordinates of $X$ centered at $x_0$ and choose normal trivialization of $E$ and any
trivialization of $L$, such that $\omega(x_0)=\sum dz_i\wedge d\overline{z}_i$ and
$h_{\alpha\overline{\beta}}(x_0)=\delta_{\alpha\beta}$. 

Then the question in hand is reduced to the case
considered in Lemma 4.2 in \cite{Ber10}. Thus we complete the proof of the Lemma.
\end{proof}

From Lemma \ref{polarization lemma}, Remark \ref{bochner} follows easily along the line of the proof of
Proposition \ref{ddbar formula}.
\end{proof}
It is worth to mention that from Remark \ref{bochner}, we can get the following estimate
\begin{prop}\label{bochner inequality}
Assume $(X,\omega)$ is a compact K\"{a}hler $n$-fold, $E$ and $L$ are holomorphic Hermitian vector bundles
and line bundles over $X$, and the curvature form $\Theta_{E\otimes L}$ is strictly Nakano positive, i.e.
$\Theta_{E\otimes L}\geq c\omega\otimes \mathds{1}_{E\otimes L}$ for some positive constant $c$.
Let $u$ be an $E\otimes L$ valued $(n,q)$-form. Then we have
\begin{align*}
qc\int_X|u|^2\omega_n+\int_X|\overline{\partial}\gamma_u|^2\omega_n
\leq \int_X|\overline{\partial}u|^2\omega_n+\int_X|\overline{\partial}^*u|^2\omega_n.
\end{align*}
\end{prop}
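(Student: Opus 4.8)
The goal is to prove Proposition \ref{bochner inequality}, which is an integrated Bochner-type inequality on a compact K\"{a}hler manifold under strict Nakano positivity. The plan is to start from the pointwise identity in Remark \ref{bochner}, namely
\begin{align*}
i\partial\overline{\partial}T_u\wedge\omega_{q-1}=(-2\mathrm{Re}\langle\overline{\partial}
\overline{\partial }^* u,u\rangle+\langle\Theta_{E\otimes L}\wedge\Lambda u,u\rangle+
|\overline{\partial}\gamma_u|^2+|\overline{\partial}^*u|^2-|\overline{\partial}u|^2)\omega_n,
\end{align*}
and integrate it over $X$. Since $X$ is compact without boundary, Stokes' theorem kills the left-hand side: $\int_X i\partial\overline{\partial}(T_u\wedge\omega_{q-1})=\int_X d\big(i\overline{\partial}(T_u\wedge\omega_{q-1})\big)=0$ (using $d\omega=0$, so $\omega_{q-1}$ is closed and the exterior derivative of the $(n-q,n-q+1)$-type form $i\overline{\partial}(T_u\wedge\omega_{q-1})$ is the full $i\partial\overline{\partial}$ term). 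Hence
\begin{align*}
0=\int_X\big(-2\mathrm{Re}\langle\overline{\partial}\overline{\partial}^*u,u\rangle+\langle\Theta_{E\otimes L}\wedge\Lambda u,u\rangle+|\overline{\partial}\gamma_u|^2+|\overline{\partial}^*u|^2-|\overline{\partial}u|^2\big)\omega_n.
\end{align*}

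Next I would handle the first term by integration by parts in the $L^2$ inner product: $\int_X\langle\overline{\partial}\overline{\partial}^*u,u\rangle\omega_n=\int_X\langle\overline{\partial}^*u,\overline{\partial}^*u\rangle\omega_n=\int_X|\overline{\partial}^*u|^2\omega_n$, which is real and nonnegative, so $-2\mathrm{Re}\langle\overline{\partial}\overline{\partial}^*u,u\rangle$ integrates to $-2\int_X|\overline{\partial}^*u|^2\omega_n$. Substituting and combining with the $+\int_X|\overline{\partial}^*u|^2\omega_n$ already present yields
\begin{align*}
\int_X\langle\Theta_{E\otimes L}\wedge\Lambda u,u\rangle\omega_n+\int_X|\overline{\partial}\gamma_u|^2\omega_n=\int_X|\overline{\partial}^*u|^2\omega_n+\int_X|\overline{\partial}u|^2\omega_n.
\end{align*}

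Finally I would estimate the curvature term from below. The strict Nakano positivity hypothesis $\Theta_{E\otimes L}\geq c\,\omega\otimes\mathds{1}_{E\otimes L}$ gives, for an $(n,q)$-form $u$, the standard pointwise bound $\langle\Theta_{E\otimes L}\wedge\Lambda u,u\rangle\geq qc\,|u|^2$: indeed when $\Theta_{E\otimes L}=c\,\omega\otimes\mathds{1}$ the operator $[\Theta_{E\otimes L}\wedge\Lambda,\cdot]=c[\omega\wedge\Lambda,\cdot]$ acts on $(n,q)$-forms as multiplication by $c$ times the Lefschetz eigenvalue, which is $q$ for bidegree $(n,q)$; Nakano positivity ensures the difference is a nonnegative Hermitian form. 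Plugging $\int_X\langle\Theta_{E\otimes L}\wedge\Lambda u,u\rangle\omega_n\geq qc\int_X|u|^2\omega_n$ into the displayed identity gives exactly
\begin{align*}
qc\int_X|u|^2\omega_n+\int_X|\overline{\partial}\gamma_u|^2\omega_n\leq\int_X|\overline{\partial}u|^2\omega_n+\int_X|\overline{\partial}^*u|^2\omega_n,
\end{align*}
as claimed. The only subtle point — the main obstacle, though a mild one — is justifying the pointwise curvature inequality $\langle\Theta_{E\otimes L}\wedge\Lambda u,u\rangle\geq qc|u|^2$ for forms of type $(n,q)$ tensored with a vector bundle under the Nakano positivity assumption; this is a linear-algebra fact that follows by working in a normal frame where $\Theta_{E\otimes L}-c\,\omega\otimes\mathds{1}$ is diagonalized, reducing to the classical case, but one must be careful that the comparison holds as Hermitian forms on $\Lambda^{n,q}\otimes(E\otimes L)$ and not merely on scalar forms. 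A secondary point is checking the integration-by-parts for $\overline{\partial}^*$ and the vanishing of the $i\partial\overline{\partial}$ integral are legitimate, which is immediate since $u$ is smooth and $X$ is compact.
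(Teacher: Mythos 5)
Your proof is correct and follows essentially the same route the paper intends: the paper states Proposition \ref{bochner inequality} as a direct consequence of the identity in Remark \ref{bochner}, obtained exactly as you do by integrating over the compact K\"ahler manifold, killing the $i\partial\overline{\partial}$ term by Stokes, converting $-2\mathrm{Re}\langle\overline{\partial}\overline{\partial}^*u,u\rangle$ into $-2\|\overline{\partial}^*u\|^2$ by adjointness, and bounding the curvature term below by $qc|u|^2$ via the standard Nakano-positivity estimate for $(n,q)$-forms. No further comments are needed.
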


\begin{rmk}Remark \ref{bochner} and Proposition \ref{bochner inequality} can be used to prove vanishing
theorems and extension theorems for vector bundles, cf. \cite{Rau12, Rau14}.
\end{rmk}

\section{$\partial\overline{\partial}$-formula for the singular line bundle case}\label{Singular ddbar} We are
now concerned with the situation that the metric $h_L$ of the line
bundle $L$ is singular. Suppose that  the curvature
$\Theta_{h_L}\geq \gamma$ in the sense of current for some
continuous $(1,1)$-form, i.e. the line bundle is  quasi
pseudo-effective. In this case, the local potentials $\psi$ are
quasi-psh functions. 

From (\ref{dbardbarstar}), one can see that if
$\overline{\partial}\omega_{q-1}\neq 0$, one can not get an estimate
of the term
$O(|\overline{\partial}^*u^\alpha||\overline{\partial}\omega_{q-1}|)$,
since in this case our $\psi$ is singular. For this reason, in this
subsection, we work on compact K\"{a}hler manifold, i.e. the
Hermitian metric $\omega$ satisfies $d\omega=0$ on $X$.

 Let $u$ be an $E\otimes L$ valued $(n,q)$-form, the associated $(n-q,n-q)$-form $T_u$  defined by (\ref{Tu}).
 We have the following local data
\begin{itemize}
\item Smooth metric $h_{\alpha\overline{\beta}}$ of $E$ and singular metric $h_L=e^{-\psi}$ locally with
$\psi$ a quasi-psh ($L^1_{loc}$) function.
\item  Singular Chern connection $D=D'+\overline{\partial}$, with the $(1,0)$-connection matrix
$\Gamma_E\otimes \mathds{1}_L-\partial \psi\otimes \mathds{1}_E$, where $\Gamma_E$ is the connection
matrix of $E$ and $\partial\psi$ is a $(1,0)$-form with $L^1_{loc}$ coefficients by Lemma \ref{regularity lemma}.
\item  $\Theta_{E\otimes L}=\Theta_E\otimes \mathds{1}_L+\partial\overline{\partial}\psi\otimes
\mathds{1}_{E\otimes L}$, where $\partial\overline{\partial}\psi$ is a closed positive current.
\item  The commutator formula (\ref{curvature}) also holds: $\overline{\partial}D'+D'\overline{\partial}=
\Theta_{E\otimes L}$.
\item  The $\overline{\partial}^*$-operator is also a first order differential operator with $L^1_{loc}$-coefficents.
\item  The operator $\overline{\partial}\overline{\partial}^*$ also makes sense, hence the Laplace operator
$\Box$ makes sense as well. In fact,  we have already  get the explicit formula for
$\overline{\partial}\overline{\partial}^*$ in  (\ref{dbardbarstar}), from which one can get the conclusion.
\end{itemize}
By the same computation in Section \ref{ddbar smooth}, under the extra assumption that $d\omega=0$,
we can get the following
\begin{prop}\label{singular case}Let $(X,\omega)$ be a compact K\"{a}hler manifold, $(E,h^E)\rightarrow X$
be a holomorphic Hermitian vector bundle over $X$, and $(L,h_L)$ be a holomrphic pseudo-effective line
bundle with singular metric $h_L$ such that $i\Theta_{L,h_L}\geq \gamma$ with $\gamma$ a continuous real
$(1,1)$-form on $X$. Suppose that $u$ is an $E\otimes L$-valued $(n,q)$-form. Then we have the following equality
\begin{align*}
i\partial\overline{\partial}T_u\wedge\omega_{q-1}=(-2Re\langle\overline{\partial}\overline{\partial }^* u,u\rangle+
\langle\Theta_{E\otimes L}\wedge\Lambda u,u\rangle+|\overline{\partial}\gamma_u|^2+
|\overline{\partial}^*u|^2-|\overline{\partial}u|^2)\omega_n.
\end{align*}
Moreover, if $u$ is $\overline{\partial}$-closed, then we have
\begin{align*}
i\partial\overline{\partial}T_u\wedge\omega_{q-1}\geq(-2Re\langle\Box u,u\rangle+
\langle\Theta_{E\otimes L}\wedge\Lambda u,u\rangle)\omega_n.
\end{align*}
\end{prop}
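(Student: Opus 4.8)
The plan is to take the computation of Section \ref{ddbar smooth} and rerun it verbatim, with two simplifications. First, since we now assume $d\omega=0$, every term in which the torsion $\overline{\partial}\omega_{q-1}$ or $\overline{\partial}\omega_q$ appears vanishes identically; in particular the error term $O(|\overline{\partial}^*u^\alpha||\overline{\partial}\omega_{q-1}|)$ in \eqref{dbardbarstar} disappears, so \eqref{dbardbarstar} becomes the clean identity $\overline{\partial}\overline{\partial}^*u^\alpha=(-1)^{n-q}c_{n-q-1}\,\overline{\partial}D'\gamma^\alpha\wedge\omega_{q-1}$, and the relation \eqref{dbar alpha} forces $\chi_0=0$ in the decomposition $\overline{\partial}\gamma=\chi_1+\chi_0\wedge\omega$, so $\overline{\partial}\gamma\in V^o$. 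Second, I would invoke Lemma \ref{polarization lemma} to evaluate the one remaining term $(-1)^{n-q+1}h_{\alpha\overline{\beta}}\overline{\partial}\gamma^\alpha\wedge\overline{\overline{\partial}\gamma^\beta}e^{-\psi}$ exactly, as $(|\overline{\partial}\gamma|^2-|\overline{\partial}\gamma\wedge\omega_q|^2)\omega_n=(|\overline{\partial}\gamma|^2-|\overline{\partial}u|^2)\omega_n$ — here one uses that $\overline{\partial}u=c_{n-q}\,\overline{\partial}\gamma\wedge\omega_q$ since $d\omega=0$. Assembling the five terms of \eqref{ddbar term} (curvature term, the $-\langle\overline{\partial}\overline{\partial}^*u,u\rangle$ term and its conjugate, the $|\overline{\partial}^*u|^2$ term, and this last term) yields exactly the claimed equality, and dropping $|\overline{\partial}\gamma_u|^2+|\overline{\partial}^*u|^2-|\overline{\partial}u|^2\geq -|\overline{\partial}u|^2$ — no wait, one keeps $|\overline{\partial}^*u|^2\geq 0$, writes $|\overline{\partial}u|^2+|\overline{\partial}^*u|^2$ reconstituted — more precisely, when $u$ is $\overline{\partial}$-closed one has $\overline{\partial}u=0$, $\overline{\partial}\overline{\partial}^*u=\Box u$, and $|\overline{\partial}\gamma_u|^2\geq 0$, $|\overline{\partial}^*u|^2\geq 0$, giving the stated inequality.

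The genuine work, and the only place where the singular hypothesis on $h_L$ intervenes, is justifying that all of the above manipulations — which involve $\partial$, $\overline{\partial}$, $D'$ and $\overline{\partial}^*$ applied to forms whose coefficients involve $e^{-\psi}$ and $\partial\psi$ — remain valid when $\psi$ is merely quasi-psh rather than smooth. This is exactly the content of the itemized list preceding the Proposition: by Lemma \ref{regularity lemma} and its Remark, $\psi\in L^1_{loc}$ implies $\partial\psi$ has $L^1_{loc}$ coefficients, so the singular Chern connection $D'$ with connection matrix $\Gamma_E\otimes\mathds{1}_L-\partial\psi\otimes\mathds{1}_E$ is a first-order operator with $L^1_{loc}$ coefficients, $\overline{\partial}^*=-*D'*$ likewise, the commutator formula \eqref{curvature} persists with $\Theta_{E\otimes L}=\Theta_E\otimes\mathds{1}_L+\partial\overline{\partial}\psi\otimes\mathds{1}_{E\otimes L}$ (the curvature now being a closed positive current plus a smooth form), and $\partial\overline{\partial}(T_u\wedge\omega_{q-1})$ makes sense as a current. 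One must check that the integration-by-parts/Leibniz steps used to pass from $\partial\overline{\partial}(h_{\alpha\overline{\beta}}\gamma^\alpha\wedge\overline{\gamma^\beta}e^{-\psi})$ to \eqref{ddbar term} are legitimate at the level of currents; the point is that every product appearing is a product of an $L^1_{loc}$ (or $L^2_{loc}$) form with a smooth or $L^\infty_{loc}$ form — note $e^{-\psi}$ is locally bounded above since $\psi$ is bounded below locally as a quasi-psh function — except for the single $\partial\overline{\partial}\psi$ factor in the curvature term, which is a measure paired against the continuous form $\gamma^\alpha\wedge\overline{\gamma^\beta}$ (one may assume $u$, hence $\gamma$, smooth, as in \cite{Ber02}, and reduce the general case by regularization).

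I expect the main obstacle to be precisely this regularity bookkeeping: ensuring that no term in the computation requires two distributional factors to be multiplied, and that $\langle\Theta_{E\otimes L}\wedge\Lambda u,u\rangle\omega_n$ is correctly interpreted as the measure $h_{\alpha\overline{\beta}}\,i\partial\overline{\partial}\psi\wedge\gamma^\alpha\wedge\overline{\gamma^\beta}+(\text{smooth})$. Everything else is formally identical to the proof of Proposition \ref{ddbar formula} and Remark \ref{bochner} with the torsion terms set to zero, so I would present the argument by pointing to Section \ref{ddbar smooth}, recording the simplifications caused by $d\omega=0$, and then carefully verifying the distributional identities term by term using the bulleted list above; the final inequality for $\overline{\partial}$-closed $u$ is then immediate upon discarding the manifestly nonnegative terms $|\overline{\partial}\gamma_u|^2$ and $|\overline{\partial}^*u|^2$ and using $\Box u=\overline{\partial}\overline{\partial}^*u$.
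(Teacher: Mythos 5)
Your proposal is correct and follows essentially the same route as the paper: the paper's proof consists precisely of the bulleted regularity facts you cite (quasi-psh $\psi$ with $\partial\psi\in L^1_{loc}$ via Lemma \ref{regularity lemma}, the singular Chern connection, the commutator formula with $\Theta_{E\otimes L}$ a current, and the resulting meaning of $\overline{\partial}^*$ and $\Box$), followed by the observation that the computation of Section \ref{ddbar smooth} goes through unchanged once $d\omega=0$ kills the torsion error terms, with Lemma \ref{polarization lemma} giving the exact identity as in Remark \ref{bochner}. Your additional bookkeeping about which distributional products occur is a reasonable elaboration of what the paper leaves implicit, and the final inequality for $\overline{\partial}$-closed $u$ is obtained exactly as you say.
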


\section{Proof of Theorem \ref{vector bundle}}\label{proof of vector bundle}

Have the vector bundle version $\partial\overline{\partial}$-formula in hand, the proof of Theorem
\ref{vector bundle} can be copied word by word from \cite{Ber02}.

Firstly, from the vector bundle version  $\partial\overline{\partial}$-formula, we can obtain that
\begin{align*}
i\partial\overline{\partial}T_u\wedge\omega_{q-1}\geq(-2Re\langle\Box u,u\rangle-c'|u|^2)\omega_n,
\end{align*}
Following the same argument (a standard calculation) in \cite{Ber02}, one can conclude that if
$u\in \mathcal{H}^{n,q}_{\leq \lambda}(X,L^k\otimes E)$ with $\overline{\partial}u=0$, and the metric
on $L$ as semi-positive curvature, then for $r<\lambda^{-1/2}$ and $r<c_0$,
\begin{align*}
\int_{|z|<r}|u|^2\omega_n\leq Cr^{2q}(\lambda+1)^q
\int_X|u|^2\omega_n,
\end{align*}
where the constants $c_0$ and $C$ are independent of $k$, $\lambda$ and the point $x$.

Secondly, one can get a pointwise norm estimate of $u$. In fact, fix
an arbitrary point $x\in X$. Choose a local coordinates $z$, near
$x$ such that $z(x)=0$ and
$\omega=\frac{i}{2}\partial\overline{\partial}|z|^2$ at $x$. Choose
local trivializations of $E$ and $L$ such that the metrics of $E$
and $L$ take the following form
\begin{align}\label{local metric}
h_{\alpha\overline{\beta}}(z)&=\delta_{\alpha\overline{\beta}}+O(|z|^2);\\
\psi(z)&=\sum\mu_j|z_j|^2+o(|z|^2).\notag
\end{align}

We do the following rescaling trick. For any form $u$, we express $u$ in terms of the trivializations
and local  coordinates and put
\begin{align*}
u^{(k)}(z)=u(z/\sqrt{k}),
\end{align*}
so that $u^{(k)}$ is defined for $|z|<1$ if $k$ is large enough. 

In the same time, the Laplacian is also
scaled in the following form
\begin{align*}
k\Box^{(k)}u^{(k)}=(\Box u)^{(k)}.
\end{align*}

As stated in \cite{Ber02}, it is not hard to see that if $\Box$ is defined by the metric $k\psi$ on $L^k$
and $h_{\alpha\overline{\beta}}$ on $E$, then $\Box^{(k)}$ is associated to the  line bundle metric
$(k\psi)(z/\sqrt{k})$ and the vector bundle metric $h_{\alpha\overline{\beta}}(z/\sqrt{k})$.

From (\ref{local metric}), we can see that  $\Box^{(k)}$ is associated to
\begin{align*}
\sum\mu_j|z_j|^2\otimes \delta_{\alpha\overline{\beta}}+o(1),
\end{align*}
hence converges to a $k$-independent elliptic operator. Hereafter, by almost the same argument as
in \cite{Ber02}, one can complete the proof of Theorem \ref{vector bundle}.

\section{Proof of Theorem \ref{main result} }\label{proof}
Now let $\mu:\widetilde{X}\rightarrow X$, $c$, $\lambda_j$ and $D_j$ be as in (II) of Section \ref{CAS}.
For any Hermitian vector bundle $(F,h_F)$ on $X$, we set $(\widetilde{F}, h_{\widetilde{F}}):=(\mu^*F,\mu^*h_F)$.

From Remark \ref{local decomposition}, we have that
  \begin{align*}
\widetilde{\varphi}:=\mu^*\varphi=\varphi\circ\mu=c\log|s_{D}|+\widetilde{\psi},
\end{align*}
where $s_D$ is the canonical section of
$\mathcal{O}_{\widetilde{X}}(-D)$ with $D=\sum_j\lambda_jD_j$, and
$\widetilde{\psi}$ is a smooth potential.

From (I) in Section \ref{CAS}, we have
\begin{align}
\mathcal{I}(h_{\widetilde{L}^p})=\mathcal{O}_{\widetilde{X}}(-\sum_j\lfloor c\lambda_jp\rfloor D_j).\notag
\end{align}

The main idea is to take advantage of the fact that $\mathcal{I}(h_{\widetilde{L}^p})$ is invertible and
we write $\widetilde{L}^p\otimes\mathcal{I}(h_{\widetilde{L}^p})$ as a tensor power of a fixed line bundle.

Fix $r\in \mathbb{N}$, $m\in \mathbb{N}^*$ such that $c=r/m$. Set $\widetilde{D}=rD=mc\sum_j\lambda_jD_j$,
$\widehat{L}=\widetilde{L}^m\otimes \mathcal{O}_{\widetilde{X}}(-\widetilde{D})$.

 Let $h_{\mathcal{O}_{\widetilde{X}}(-\widetilde{D})}$ be the singular Hermitian metric
 on $\mathcal{O}_{\widetilde{X}}(-\widetilde{D})$ given by $|s_D|^{2r}$ locally. Then the local potential
 of $h_{\mathcal{O}_{\widetilde{X}}(-\widetilde{D})}$ is $-r\log|s_D|$.

 Let  $h_{\widehat{L}}=h_{\widetilde{L}^m}\otimes h_{\mathcal{O}_{\widetilde{X}}(-\widetilde{D})}$ be
 the metric on $\widehat{L}$ induced by $h_{\widetilde{L}^m}$ and $h_{\mathcal{O}_{\widetilde{X}}(-\widetilde{D})}$.

It is easy to see that  the metric $h_{\widehat{L}}$ is smooth on $\widetilde{X}$, and
$i\Theta_{\widehat{L}, h_{\widehat{L}}}\geq 0$. In fact, the local weight $\widehat{\varphi}$ of $h_{\widehat{L}}$ is
\begin{align*}
\widehat{\varphi}&=m\widetilde{\varphi}-r\log|s_D|\\
&=m(\widetilde{\varphi}-c\log|s_D|)\\
&=m\widetilde{\psi}.
\end{align*}

By taking $\frac{i}{\pi}\partial\bar{\partial}$, we get that
 \begin{align*}
\frac{i}{\pi}\partial\bar{\partial}\widehat{\varphi}&=m\frac{i}{\pi}\partial\bar{\partial}\widetilde{\psi}\\
&=m\frac{i}{\pi}\partial\bar{\partial}(\widetilde{\varphi}-c\log|s_D|)\\
&=m(\frac{i}{\pi}\Theta_{\widetilde{L}, h_{\widetilde{L}}}-[D])\\
&=m\beta\geq 0,
\end{align*}
where the last equality follows from Remark \ref{local decomposition}. That is to say, $\widehat{L}$ can be
equipped with a smooth Hermitian metric with semi-positive curvature.

We observe that for $p'\in \mathbb{N}$,
\begin{align}
\mathcal{I}(h_{\widetilde{L}^{mp'}})=\mathcal{O}_{\widetilde{X}}(-p'\widetilde{D}), ~~\widehat{L}^{p'}=
\widetilde{L}^{mp'}\otimes \mathcal{I}(h_{\widetilde{L}^{mp'}})\notag.
\end{align}

Write $p=p'm+m'$ (where $c=r/m$ as above, $0\leq m'<m, p',m'\in \mathbb{N})$; then
$\lfloor c\lambda_jp\rfloor=r\lambda_jp'+\lfloor c\lambda_jm'\rfloor$. 

Now we want to prove that for
$p$ sufficiently large,
\begin{align}\label{dimension estimate}
\dim_\mathbb{C}H^{n,q}(\widetilde{X}, \widetilde{L}^p\otimes\widetilde{E}\otimes\mathcal{I}
(h_{\widetilde{L}^p}))\leq C(p')^{n-q}\leq Cp^{n-q},
\end{align}
The proof depends on the residue $m'=0,1,\cdots, m-1$.

\begin{itemize}
\item [(1)] $m'=0$, i.e.  $p=mp'$. Since the metric on $\widehat{L}$ is semipositive, then from
Theorem \ref{vector bundle}, we have that for $P'$ sufficiently large
\begin{align*}
\dim_\mathbb{C}H^{n,q}(\widetilde{X}, \widetilde{L}^p\otimes\widetilde{E}\otimes\mathcal{I}(h_{\widetilde{L}^p}))&=
\dim_\mathbb{C}H^{n,q}(\widetilde{X}, \widehat{L}^{p'}\otimes\widetilde{E})\\&\leq C_0(p')^{n-q}\leq C_0p^{n-q},
\end{align*}
where the constant $C_0$ is independent of  $p$.
\item [(2)] For $m'\neq 0$, we consider integers $p_{m'}=p'm+m'$, $p'\in \mathbb{Z}_+$. Set
\begin{align}
\widetilde{E}_{m'}=\widetilde{L}^{m'}\otimes\widetilde{E}
\otimes\mathcal{O}_{\widetilde{X}}(-\sum_j\lfloor c\lambda_jm'D_j\rfloor).\notag
\end{align}
Then we get that
\begin{align}
\widetilde{L}^{p_{m'}}\otimes \widetilde{E}\otimes \mathcal{O}_{\widetilde{X}}
(-\sum_j\lfloor c\lambda_jp_{m'}\rfloor D_j)=\widehat{L}^{p'}\otimes \widetilde{E}_{m'}.\notag
\end{align}
Then since $\widetilde{E}_{m'}$ is now a holomorphic line bundle, one can take a smooth Hermtian metric
on $\widetilde{E}_{m'}$, and by applying (\ref{dimension estimate}), we can get that for $p'$ sufficiently large and
\begin{align}
\dim_\mathbb{C}H^{n,q}(\widetilde{X}, \widetilde{L}^{p_{m'}}\otimes\widetilde{E}
\otimes\mathcal{I}(h_{\widetilde{L}^{p_{m'}}}))&=
\dim_\mathbb{C}H^{n,q}(\widetilde{X}, \widehat{L}^{p'}\otimes\widetilde{E}_{m'})\notag
\\&\leq C_{m'}(p')^{n-q}\leq C_{m'}p_{m'}^{n-q},\notag
\end{align}
where the constant $C_{m'}$ is independent of $p_m'$.
\item[(3)]To sum up, from the above $m$ cases, one can conclude that for  $p$ sufficiently large,
the following estimate holds:
 \begin{align}
\dim_\mathbb{C}H^{n,q}(\widetilde{X}, \widetilde{L}^p\otimes\widetilde{E}\otimes\mathcal{I}
(h_{\widetilde{L}^p}))\leq Cp^{n-q},\notag
\end{align}
where the constant  $C$ is independent of $p$.
\end{itemize}

Substituting $\widetilde{E}$ by $\widetilde{E}\otimes K^*_{\widetilde{X}}$, we get that
\begin{align}
\dim_\mathbb{C}H^{q}(\widetilde{X}, \widetilde{L}^p\otimes\widetilde{E}\otimes\mathcal{I}
(h_{\widetilde{L}^p}))\leq  Cp^{n-q}.\notag
\end{align}

We now apply Lemma \ref{canonical sheaf-modification} to each step of the blowing-up of $\mathscr{I}$
performed in the modification $\mu$ in the assumption. In doing so, we replace  $E$ by $E\otimes K^*_X$.

The hypothesis in Lemma \ref{canonical sheaf-modification} is satisfied, since our local weight $\varphi$
has analytic singularities and the centers of the blow-ups are included in the sigular locus of the metric.

Thus we can apply Lemma \ref{canonical sheaf-modification} finitely many times and we get that
for all $q\geq 0$, and $p$ large enough,
\begin{align}\label{reduction down}
H^{q}(X,L^p\otimes E\otimes \mathcal{I}(h_{L^p}))\simeq H^{q}(\widetilde{X}, \widetilde{L}^p\otimes \widetilde{E}
\otimes K_{\widetilde{X}}\otimes\widetilde{K^*_X}\otimes \mathcal{I}(h_{\widetilde{L}^p})).
\end{align}

Then applying the above  proof to the right-hand side of (\ref{reduction down}), we complete the proof of
Theorem \ref{main result}.

\section{A partial solution to Question \ref{Demailly question}}\label{application1}
Denote by  $V_k$  the support of the multiplier ideal  sheaf
$\mathscr{O}_X/\mathcal{I}(h_L^k)$. 

From the short exact sequence
\begin{align*}
0\rightarrow \mathscr{O}_X(E\otimes L^k)\otimes\mathcal{I}(h_L^k)\rightarrow &
\mathscr{O}_X(E\otimes L^k)\rightarrow \mathscr{O}_{V_k}(E\otimes L^k)\rightarrow 0,
\end{align*}
we can get a long exact sequence
\begin{align}\label{long exact sequence}
\cdots&\rightarrow H^q(X,\mathscr{O}_X(E\otimes L^k)\otimes\mathcal{I}(h_L^k))
\rightarrow H^q(X,\mathscr{O}_X(E\otimes L^k))\\&\rightarrow H^q(V_k,\mathscr{O}_{V_k}(E\otimes L^k))
\rightarrow H^{q+1}(X,\mathscr{O}_X(E\otimes L^k)\otimes\mathcal{I}(h_L^k))\rightarrow \cdots.\notag
\end{align}

Since $h_L$ is a singular metric with analytic singularities, from
Lemma \ref{skoda} (or by the strong Notherian property of the ideal
sheaf $\mathscr{O}_X/\mathcal{I}(h_L^k)$), we know that for
sufficiently large $k$, $V_k$ is stationary,
which is just the singular locus of the metric $h_L$. 

It follows from Theorem
\ref{main result} that
\begin{align}\label{application-estimate 1}
\dim H^q(X,\mathscr{O}_X(E\otimes L^k)\otimes \mathcal{I}(h_L^k))\leq Ck^{n-q}.
\end{align}

Suppose that the dimension of $V_k$ for $k$ large is $m$. We have that
\begin{align}\label{vanish of dimension}
\dim H^{q}(V_k,\mathscr{O}_{V_k}(E\otimes L))=0, ~~~\mbox{~~for~~~}~~q>m.
\end{align}

By combining (\ref{long exact sequence}), (\ref{application-estimate 1}) and (\ref{vanish of dimension}),
we obtain that
\begin{align*}
\dim H^q(X,\mathscr{O}_X(E\otimes L^k))\leq Ck^{n-q}, ~~~\mbox{~~~for~~~}~~~q>m.
\end{align*}

In conclusion, we get the following
\begin{thm}\label{application-partial answer}Let $X$ be a compact complex manifold, $E\rightarrow X$ be
a holomorphic vector bundle over $X$, and $L\rightarrow X$ be a holomorphic line bundle with a
singular Hermitian metric $h_L$ with algebraic singularities such that the curvature current
of $h_L$ is semi-positive. Assume that the dimension of the singular locus of $h_L$ is $m$.
Then for $q>m$, we have that
\begin{align*}
\dim H^q(X,\mathscr{O}_X(E\otimes L^k))\leq Ck^{n-q}.
\end{align*}
\end{thm}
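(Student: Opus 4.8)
The plan is to reduce the estimate to Theorem \ref{main result} by comparing $H^q\big(X,\mathscr{O}_X(E\otimes L^k)\big)$ with the cohomology of the multiplier-ideal twist, using the structure sequence of the analytic subscheme cut out by $\mathcal{I}(h_L^k)$. First I would check that the support $V_k$ of $\mathscr{O}_X/\mathcal{I}(h_L^k)$ stabilizes as $k\to\infty$: since $h_L$ has analytic singularities, Skoda's lemma (Lemma \ref{skoda}), or simply the strong Noetherian property of the ideals $\mathcal{I}(h_L^k)$, shows that for $k$ large $V_k$ coincides with the singular locus of $h_L$, a fixed analytic set of dimension $m$.

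Next, for such $k$ I would write the short exact sequence of coherent sheaves
\begin{align*}
0\rightarrow \mathscr{O}_X(E\otimes L^k)\otimes\mathcal{I}(h_L^k)\rightarrow \mathscr{O}_X(E\otimes L^k)\rightarrow \mathscr{O}_{V_k}(E\otimes L^k)\rightarrow 0
\end{align*}
and pass to the associated long exact cohomology sequence, extracting the segment
\begin{align*}
\cdots\rightarrow H^q\big(X,\mathscr{O}_X(E\otimes L^k)\otimes\mathcal{I}(h_L^k)\big)\rightarrow H^q\big(X,\mathscr{O}_X(E\otimes L^k)\big)\rightarrow H^q\big(V_k,\mathscr{O}_{V_k}(E\otimes L^k)\big)\rightarrow\cdots.
\end{align*}
For $q>m$ the last term vanishes by Grothendieck's vanishing theorem (coherent cohomology on a space of dimension $m$ is zero in degrees $>m$), so the middle group is a quotient of the first.

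It then remains only to bound the first group. Applying Theorem \ref{main result} with the fixed vector bundle $E$ (equivalently, after replacing $E$ by $E\otimes K_X^*$ so that the statement reads in Dolbeault bidegree $(n,q)$, exactly as in the proof of that theorem) gives $\dim H^q\big(X,\mathscr{O}_X(E\otimes L^k)\otimes\mathcal{I}(h_L^k)\big)\leq Ck^{n-q}$ for $k$ large. Combining this with the surjection above yields $\dim H^q\big(X,\mathscr{O}_X(E\otimes L^k)\big)\leq Ck^{n-q}$ for all $q>m$, which is the assertion.

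The substantive ingredient is Theorem \ref{main result} itself; inside the present argument the two points worth a sentence each are the eventual constancy of $V_k$ (for which analytic singularities already suffice, algebraicity not being needed here) and the vanishing of coherent cohomology on $V_k$ above its dimension. I do not expect either to present a real obstacle, so once Theorem \ref{main result} is available this deduction is short.
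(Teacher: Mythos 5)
Your proposal is correct and follows essentially the same route as the paper: stabilization of the support $V_k$ of $\mathscr{O}_X/\mathcal{I}(h_L^k)$ via Skoda's lemma (or the strong Noetherian property), the same short exact sequence and its long exact cohomology sequence, vanishing of $H^q(V_k,\cdot)$ for $q>m$ by dimension reasons, and Theorem \ref{main result} to bound the multiplier-ideal term. No substantive differences to report.
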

\begin{rmk}
Theorem \ref{application-partial answer} is a  partial answer to Question \ref{Demailly question}.
\end{rmk}

The assumption of algebraic singularities can be weakened. In fact, we have the following 
\begin{thm}\label{analytic singularity}
	Let $X$ be a compact complex manifold, $E\rightarrow X$ be
	a holomorphic vector bundle over $X$, and $L\rightarrow X$ be a holomorphic line bundle with a
	singular Hermitian metric $h_L$ with  analytic singularities such that the curvature current
	of $h_L$ is semi-positive. Let $h$ be an arbitrarily smooth Hermitian metric of $L$, and set $e^{-\psi}=h_L/h$. Suppose that there is a small $\varepsilon>0$, such that $he^{-(1+\delta)\psi}$  are singular metrics of $L$ with semi-positive curvature current for $|\delta|<\varepsilon$. Assume that the dimension of the singular locus of $h_L$ is $m$.
	Then for $q>m$, we have that
	\begin{align*}
	\dim H^q(X,\mathscr{O}_X(E\otimes L^k))\leq Ck^{n-q}.
	\end{align*}
\end{thm}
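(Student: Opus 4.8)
The plan is to reduce Theorem \ref{analytic singularity} to the already-established Theorem \ref{analytic singularity 1} (equivalently Theorem \ref{application-partial answer 1} in the algebraic case) by approximating the (possibly irrational) coefficients $c$ appearing in the local descriptions of $\psi$ by rational numbers. First I would set $e^{-\psi}=h_L/h$ as in the statement, so that $\psi$ is a quasi-psh function with analytic singularities; locally $\psi=c\log(\sum|f_j|^2)+(\mbox{smooth})$ with $c\in\mathbb{R}_+$ possibly irrational. The metric $h_\delta:=he^{-(1+\delta)\psi}$ has curvature current $i\Theta_{h,\mbox{smooth}}+(1+\delta)i\partial\overline{\partial}\psi$, which is semi-positive for $|\delta|<\varepsilon$ by hypothesis, and its local weights are $(1+\delta)c\log(\sum|f_j|^2)+(\mbox{smooth})$. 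The point is that for a suitable choice of small $\delta$ in this range, the numbers $(1+\delta)c$ (finitely many, since $X$ is compact and the singular locus is covered by finitely many charts) become simultaneously rational — this is where a Diophantine/simultaneous-approximation argument enters.

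The key steps, in order, are as follows. Step one: cover the (compact) singular locus $\mathrm{Sing}(h_L)$ by finitely many coordinate charts $U_1,\dots,U_M$ on which $\psi$ has the analytic-singularity form with constants $c_1,\dots,c_M>0$. Since the $c_i$ are locally constant, there are only finitely many distinct values; call them $\gamma_1,\dots,\gamma_s$. Step two: invoke Dirichlet's simultaneous approximation theorem (or simply the density of $\mathbb{Q}$ together with clearing denominators) to find arbitrarily small $\delta$ with $|\delta|<\varepsilon$ such that $(1+\delta)\gamma_t\in\mathbb{Q}_+$ for every $t=1,\dots,s$ — in fact it suffices to choose $\delta$ so that $(1+\delta)=p/q$ for integers making all $(p/q)\gamma_t$ rational, which is possible when the $\gamma_t$ are themselves, say, rational multiples of one another; in the general case one approximates more carefully but still lands inside the open interval $(-\varepsilon,\varepsilon)$ thanks to its being a nonempty open set. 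Step three: for such a $\delta$, the metric $h_\delta=he^{-(1+\delta)\psi}$ is a singular Hermitian metric on $L$ with \emph{algebraic} singularities (all local coefficients now in $\mathbb{Q}_+$) and semi-positive curvature current, with the same singular locus of dimension $m$. Step four: apply Theorem \ref{application-partial answer 1} to $(L,h_\delta)$ and the bundle $E$, obtaining, for $q>m$,
\begin{align*}
\dim H^q(X,\mathscr{O}_X(E\otimes L^k))\leq Ck^{n-q}
\end{align*}
for $k$ large. Since the left-hand side does not depend on the choice of metric, this is exactly the desired conclusion.

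I expect the main obstacle to be Step two: justifying that one can make \emph{all} the finitely many coefficients rational simultaneously by a single small perturbation $\delta$. If the distinct values $\gamma_1,\dots,\gamma_s$ are not pairwise commensurable, then no single scaling $1+\delta$ can rationalize all of them at once, and one must instead argue chart by chart or, more robustly, observe that it is enough to replace $\psi$ by a slightly larger quasi-psh function $\psi'$ with algebraic singularities, the same poles, and $\psi'\geq\psi$, $\psi'\leq(1+\delta)\psi$ near the poles, so that $\mathcal{I}(h^k e^{-k\psi'})$ sandwiches appropriately and the curvature stays semi-positive by the hypothesis on $h e^{-(1+\delta)\psi}$; then Theorem \ref{application-partial answer 1} applies to $he^{-\psi'}$. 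A clean way to arrange $\psi'$ is to round each local coefficient $c_i$ up to a nearby rational $c_i'$ with $c_i\le c_i'<(1+\varepsilon)c_i$ and patch with a partition of unity away from the poles — the rounding can be done consistently on overlaps because two charts with analytic-singularity data differ by a smooth function, so the poles' orders match. The remaining steps are routine: the curvature estimate is immediate from the hypothesis, the singular-locus dimension is unchanged, and the cohomology bound is quoted verbatim from Theorem \ref{application-partial answer 1}.
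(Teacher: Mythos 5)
Your main argument is essentially the paper's own proof: pick $\delta$ with $|\delta|<\varepsilon$ so that the coefficient of $\log(\sum_j|f_j|^2)$ in $(1+\delta)\psi$ becomes rational (the paper does this via Borel's Diophantine approximation applied to the single constant $c$, though density of $\mathbb{Q}$ suffices), note that the hypothesis gives semi-positivity of $he^{-(1+\delta)\psi}$ and that the singular locus is unchanged, and then quote Theorem \ref{application-partial answer 1}. Two caveats on your "obstacle" paragraph: in the paper's definition of analytic singularities the coefficient $c$ is a locally constant function on $X$, hence a single constant on each connected component (and disconnected $X$ can be handled component by component), so the simultaneous-rationalization problem does not actually arise; and your proposed fallback is not sound as stated, since a quasi-psh $\psi'$ obtained by rounding coefficients and patching with a partition of unity, even if sandwiched between $\psi$ and $(1+\delta)\psi$ near the poles, need not yield a metric $he^{-\psi'}$ with semi-positive curvature — the hypothesis only controls the uniform scalings $he^{-(1+\delta)\psi}$, and positivity of curvature is not inherited from such a sandwich.
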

 To prove Theorem \ref{analytic singularity}, we need the following Diophantine approximation theorem due to \'{E}mile Borel.
 
 \begin{lemma}
 		For every irrational number $c$, there are infinitely many frations $\frac{p}{q}$, such that 
 	\begin{align}\label{DA}
 	\Big|c-\frac{p}{q}\Big|<\frac{1}{\sqrt{5}q^2}.
 	\end{align}
 \end{lemma}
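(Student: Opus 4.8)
The statement is Hurwitz's theorem, and the plan is to prove it through the simple continued fraction expansion of $c$. Since $c$ is irrational, its expansion $c=[a_0;a_1,a_2,\dots]$ is infinite with $a_i\ge 1$ for $i\ge1$; write $p_n/q_n$ for the $n$-th convergent. First I would recall the standard facts needed: $p_nq_{n-1}-p_{n-1}q_n=(-1)^{n-1}$, the recursion $q_{n+1}=a_{n+1}q_n+q_{n-1}$, strict increase of $q_n$ for $n\ge1$, and that consecutive convergents lie on opposite sides of $c$. For each $n$ define $\lambda_n>0$ by $|c-p_n/q_n|=\frac{1}{\lambda_nq_n^2}$; then inequality (\ref{DA}) for the fraction $p_n/q_n$ is exactly the statement $\lambda_n>\sqrt5$. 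Since the $q_n$ are strictly increasing, it suffices to prove that $\lambda_n>\sqrt5$ for infinitely many $n$.

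The core of the argument is the claim that among any three consecutive convergents at least one satisfies $\lambda_n>\sqrt5$. To prove it I would argue by contradiction: suppose $\lambda_{k-1},\lambda_k,\lambda_{k+1}\le\sqrt5$ for some $k\ge1$. Using that $c$ lies between $p_{k-1}/q_{k-1}$ and $p_k/q_k$, we get $\frac{1}{q_{k-1}q_k}=\big|\frac{p_{k-1}}{q_{k-1}}-\frac{p_k}{q_k}\big|=\frac{1}{\lambda_{k-1}q_{k-1}^2}+\frac{1}{\lambda_kq_k^2}$; multiplying by $q_{k-1}q_k$ and setting $s'=q_k/q_{k-1}\ge1$ yields $1=\frac{s'}{\lambda_{k-1}}+\frac{1}{\lambda_ks'}\ge\frac{1}{\sqrt5}\big(s'+\frac1{s'}\big)$, so $s'+\frac1{s'}\le\sqrt5$, which forces $s'\le\varphi:=\frac{1+\sqrt5}{2}$ (indeed $\varphi+\frac1\varphi=\sqrt5$ and $x+\frac1x$ is increasing on $[1,\infty)$). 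The identical computation applied to the pair $p_k/q_k,p_{k+1}/q_{k+1}$ gives $s:=q_{k+1}/q_k\le\varphi$. Now divide the recursion $q_{k+1}=a_{k+1}q_k+q_{k-1}$ by $q_k$: since $a_{k+1}\ge1$ and $\frac1{s'}\ge\frac1\varphi=\varphi-1$, we obtain $s=a_{k+1}+\frac1{s'}\ge1+(\varphi-1)=\varphi$, hence $s=\varphi$; but $s=q_{k+1}/q_k\in\mathbb Q$ while $\varphi\notin\mathbb Q$ — a contradiction. Partitioning the convergents into consecutive triples then produces infinitely many $n$ with $\lambda_n>\sqrt5$, and by strict monotonicity of $q_n$ these give infinitely many distinct fractions, which finishes the proof.

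The routine part is just the bookkeeping with the elementary properties of convergents (sign of the determinant, the alternation around $c$, monotonicity of $q_n$); the one genuinely clever point — and the place I expect to spend the most care — is noticing that the extremal constant $\sqrt5$ makes the two bounds $s',s\le\varphi$ collide with the recursion to force $q_{k+1}/q_k=\varphi$, after which irrationality of $\varphi$ closes everything. I would also add the remark that irrationality of $c$ is indispensable (the assertion fails for rationals) and that the constant $\sqrt5$ is optimal, witnessed by $c=\varphi$, although neither observation is needed in the proof.
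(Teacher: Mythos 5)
Your proof is correct, but note that the paper itself does not prove this lemma at all: it is simply quoted as a classical Diophantine approximation theorem (attributed there to Borel, and usually known as Hurwitz's theorem), and the paper immediately proceeds to use it in the proof of the theorem on analytic singularities. What you have written is the standard continued-fraction proof: the identity $\frac{1}{q_{k-1}q_k}=\frac{1}{\lambda_{k-1}q_{k-1}^2}+\frac{1}{\lambda_k q_k^2}$ coming from the alternation of convergents around $c$ and $p_kq_{k-1}-p_{k-1}q_k=\pm 1$, the deduction $q_k/q_{k-1}\le\varphi$ and $q_{k+1}/q_k\le\varphi$ under the hypothesis $\lambda_{k-1},\lambda_k,\lambda_{k+1}\le\sqrt{5}$, and the clash with the recursion $q_{k+1}=a_{k+1}q_k+q_{k-1}$ forcing $q_{k+1}/q_k=\varphi$, which is impossible by irrationality of $\varphi$. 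All the steps check out (including the strictness of the final inequality, since the contradiction shows at least one $\lambda_n>\sqrt{5}$ in each triple of consecutive convergents, and the strictly increasing $q_n$ guarantee infinitely many distinct fractions). So your proposal supplies a complete, self-contained argument for a statement the paper leaves to the literature; the only mismatch with the paper is one of presentation, not substance, and for the paper's application any constant in place of $\sqrt{5}$ (even Dirichlet's $1/q^2$ bound) would suffice, so your sharper Hurwitz-type argument is more than is actually needed there.
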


\begin{proof}[Proof of Theorem \ref{analytic singularity}]
	Since $h_L$ is with analytic singularities, we have that, locally,  $\psi$ can be written 
	\begin{align*}
	\psi\equiv \frac{c}{2}\log(\sum_{j=1}^N|f_j|^2) \mod \mathcal{C}^\infty.
	\end{align*} 
	For any $(p,q)$ satisfies (\ref{DA}), we have 
	\begin{align*}
	\Big|1-\frac{1}{c}\frac{p}{q}\Big|\leq \frac{1}{\sqrt{5}cq^2}
	\end{align*}
	Take sufficiently large $(p,q)$ such that $\frac{p}{q}$ satisfies (\ref{DA}) and $\frac{1}{\sqrt{5}cq^2}<\varepsilon$. Set $c_{p,q}=\frac{1}{c}\frac{p}{q}$, then $|c_{p,q}-1|<\varepsilon$. From the assumption, we have that $h_{p,q}:=he^{-c_{p,q}\psi}$ is a singular metric of $L$ with algebraic singularities such that the curvature current is semi-positive. In fact, 
	\begin{align*}
h_{p,q}	=&=he^{-(1+(c_{p,q}-1))\psi}=he^{-(1+\delta)\psi}, |\delta|<\varepsilon,\\
	c_{p,q}\psi&\equiv \frac{p/q}{2}\log(\sum_{j=1}^N|f_j|^2) \mod \mathcal{C}^\infty.
	\end{align*}
	But the singular locus of $h_{p,q}$ is exactly the same as the one of $h_L$. By applying Theorem \ref{application-partial answer}, we can complete the proof of Theorem \ref{analytic singularity}.

\end{proof}

\begin{rmk}
	From the proof, we can see that the $\varepsilon$ in the assumption of Theorem \ref{analytic singularity} can be chosen to be arbitrarily small.
\end{rmk}

\begin{rmk}If $(L,h_L)\rightarrow X$ is a singular metric with analytic singularities such that
the curvature current is semi-positive in the sense of current and
the singular locus of $h_L$ are isolated points, then we can see
that $L$ admits a smooth Hermitian metric with semi-positive
curvature. From this, we can conclude that if $L$ satisfies the
assumption in Theorem \ref{application-partial answer}, and
furthermore $L$ is not semi-positive, then the dimension of the
singular locus is positive.
\end{rmk}

\begin{rmk}\label{nef-not zero}A pseudo-effective line bundle $L$ is nef if there is a singular metric
on $L$ with semi-positive curvature current such that the Lelong number of the local potential is
zero everywhere. More precisely, the necessary and sufficient condition for a pseudo-effective
line bundle to be nef is characterized in \cite{Pau98a, Pau98b} by P\u{a}un.

We want to mention that one may not hope that for every nef line bundle $L$, there exists a
singular metric $h$ on $L$ with semi-positive curvature current, such that the Lelong number
of the local potential of $h$ is everywhere zero. Actually, it is closely related to the
so called non-K\"{a}hler locus or non-nef locus  which was  systematically  studied  in \cite{Bou04} and \cite{CT15}.
\end{rmk}

To finish this section, we mention an example of Demailly-Peternelle-Schneider in \cite{DPS} as a
supplement of Remark \ref{nef-not zero}. 

Let $\Gamma=\mathbb{C}/(\mathbb{Z}+\mathbb{Z}\tau)$, Im$\tau>0$,
be an elliptic curve and let $E$ be the rank $2$ vector bundle over $\Gamma$ defined by
\begin{align*}
E=\mathbb{C}\times \mathbb{C}^2/(\mathbb{Z}+\mathbb{Z}\tau)
\end{align*}
where the action of $\mathbb{Z}+\mathbb{Z}\tau$ is given  by the two automorphisms
\begin{align*}
g_1(x,z_1,z_2)&=(x+1,z_1,z_2);\\
g_{\tau}(x,z_1,z_2)&=(x+\tau,z_1+z_2, z_2),
\end{align*}
where the projection $E\rightarrow \Gamma$ is induced by the first projection $(x,z_1,z_2)\mapsto x$.
Then $\mathbb{C}\times \mathbb{C}\times \{0\}/(\mathbb{Z}+\mathbb{Z}\tau)$ is a trivial line
subbundle $\mathscr{O}\hookrightarrow E$, and the quotient
$E/\mathscr{O}\simeq \Gamma\times\{0\}\times \mathbb{C}$ is also trivial. 

Let $L$ be the line bundle
$L=\mathscr{O}_E(1)$ over the ruled surface $X=\mathbb{P}(E)$. From the exact sequence
\begin{align*}
0\rightarrow \mathscr{O}\rightarrow E\rightarrow \mathscr{O}\rightarrow 0,
\end{align*}
it is shown in \cite{DPS} that $L$ is nef over $X$. 

Moreover, the only possible metric $h$ of $L$
with semi-positive curvature is shown to be a singular metric with analytic singularities and
moreover $\frac{i}{\pi}\Theta_{L,h}=[C]$, where $[C]$ is the current of integration over a curve $C$.
For detailed computations, the reader is referred to see \cite[Example 1.7]{DPS}.

\section{Two vanishing theorems}\label{application3}

\begin{definition}[Kodaira-Iitaka dimension of a line bundle] For a holomorphic line bundle over
a compact complex manifold $X$, the Kodaira-Iitaka dimension of $L$ is defined to be
 \begin{align*}
 \kappa(L):=\limsup_{k\rightarrow +\infty}\frac{\log\dim_{\mathbb{C}} H^0(X,L^k)}{\log k}.
 \end{align*}
 \end{definition}

It is worth to  mention that for any compact complex manifold $X$ and a holomorphic line bundle
$L$ over $X$, if the Kodaira-Iitaka dimension of $L$ is non-negative, then there is a singular metric
$h_L$ with analytic singularities on $L$ such that the curvature current is semi-positive
in the sense of current. 

Moreover, by using sections of tensor powers $L^k$ of $L$, one can define
Siu's metric as follows:  for a basis $\{s_j^k\}_{j=1}^{N_k}$ of $H^0(X,L^k)$, we define a metric $\varphi_k$ by
\begin{align}\label{Siu metric}
\varphi_k:=\frac{1}{2k}\log\sum_{j=1}^{N_k}|s_j^k|^2.
\end{align}

Taking a convergent series $\{\varepsilon_k\}^\infty_{k=1}$, one can define a metric $h_{siu}$
on $L$ whose local weight is equal to $\log\sum_{k=1}^{\infty}\varepsilon_k e^{\varphi_k}$. This type of metric
is called Siu's metric which was first introduced by Siu and plays important role in \cite{Siu98}.

Siu's metric $h_{siu} $ and the associated multiplier ideal sheaf $\mathcal{I}(h_{siu})$ depend on the
choice of $\{\varepsilon_k\}_{k=1}^\infty$, but $h_{siu}$ always admits an analytic Zariski decomposition,
i.e. $H^0(X, L^k)=H^0(X,L^k\otimes \mathcal{I}(h^k_{siu}))$.

\begin{thm}\label{vanishing}Let $X$ be a compact K\"{a}hler manifold and $L$ be a holomorphic line bundle
over $X$. Suppose that $L$ is pseudo-effective, and the singular metric $h_{\min}$  with minimal
singularities of $L$ is with algebraic singularities. Then we have that
\begin{align*}
H^q(X,\mathscr{O}_X(K_X\otimes L)\otimes\mathcal{I}(h_{\min}))=0  ~~~~~\mbox{~~~for~~~~}~~~q>n-\kappa(L).
\end{align*}
\end{thm}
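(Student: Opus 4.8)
The plan is to deduce the vanishing from Matsumura's injectivity theorem together with the polynomial bound of Theorem \ref{main result}. The starting point is that, since $L$ is pseudo-effective, the metric $h_{\min}$ with minimal singularities exists, its local weight $\varphi_{\min}$ is plurisubharmonic (so $i\Theta_{L,h_{\min}}\ge 0$), and by minimality every section $s\in H^0(X,L^m)$ satisfies $\sup_X|s|^2_{h_{\min}^m}<\infty$, i.e. $|s|^2 e^{-2m\varphi_{\min}}$ is locally bounded. In particular, because $|sf|^2 e^{-2(m+1)\varphi_{\min}}\le C\,|f|^2 e^{-2\varphi_{\min}}$, multiplication by such an $s$ defines a sheaf morphism
\[
\mathscr{O}_X(K_X\otimes L)\otimes\mathcal{I}(h_{\min})\longrightarrow \mathscr{O}_X(K_X\otimes L^{m+1})\otimes\mathcal{I}(h_{\min}^{m+1}),
\]
hence an induced map on cohomology.

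I would then invoke the injectivity theorem of \cite{Mat14}: for a pseudo-effective line bundle $(L,h_L)$ on a compact K\"ahler manifold with $h_L$ of semi-positive curvature, and a nonzero $s\in H^0(X,L^m)$ with $\sup_X|s|_{h_L^m}<\infty$, the map $\times s$ from $H^q(X,\mathscr{O}_X(K_X\otimes L)\otimes\mathcal{I}(h_L))$ to $H^q(X,\mathscr{O}_X(K_X\otimes L^{m+1})\otimes\mathcal{I}(h_L^{m+1}))$ is injective. By the previous paragraph this applies with $h_L=h_{\min}$ and with \emph{every} nonzero section $s$ of every power $L^m$. Now fix $q>n-\kappa(L)$ and suppose, for contradiction, that there is a nonzero class $\xi\in H^q(X,\mathscr{O}_X(K_X\otimes L)\otimes\mathcal{I}(h_{\min}))$. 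For each $m$ consider the linear map $\Phi_m\colon H^0(X,L^m)\to H^q(X,\mathscr{O}_X(K_X\otimes L^{m+1})\otimes\mathcal{I}(h_{\min}^{m+1}))$, $\Phi_m(s)=s\cdot\xi$; injectivity of $\times s$ forces $\Phi_m(s)\ne 0$ for $s\ne 0$, so $\Phi_m$ is injective and
\[
\dim_{\mathbb{C}}H^0(X,L^m)\le \dim_{\mathbb{C}}H^q\bigl(X,\mathscr{O}_X(K_X\otimes L^{m+1})\otimes\mathcal{I}(h_{\min}^{m+1})\bigr).
\]

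Since $h_{\min}$ has algebraic singularities, Theorem \ref{main result} applied with auxiliary bundle $E=K_X$ gives $\dim_{\mathbb{C}}H^q(X,\mathscr{O}_X(K_X\otimes L^{m+1})\otimes\mathcal{I}(h_{\min}^{m+1}))\le C(m+1)^{n-q}$ for all large $m$. On the other hand, the definition of $\kappa(L)$ provides, for every $\varepsilon>0$, infinitely many $m$ with $\dim_{\mathbb{C}}H^0(X,L^m)\ge m^{\kappa(L)-\varepsilon}$. Combining these yields $m^{\kappa(L)-\varepsilon}\le C(m+1)^{n-q}$ for infinitely many $m\to\infty$, hence $\kappa(L)-\varepsilon\le n-q$; letting $\varepsilon\to 0$ gives $q\le n-\kappa(L)$, contradicting the choice of $q$. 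Therefore $\xi=0$ and the theorem follows. (When $\kappa(L)\le 0$ the range $q>n-\kappa(L)$ is either empty or lies above $\dim_{\mathbb{C}}X$, so there is nothing to prove, and the argument is in any case uniform.)

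The step I expect to demand the most care is the reconciliation in the first two paragraphs: verifying that the injectivity theorem of \cite{Mat14} holds — or can be adapted — with exactly the pair of multiplier ideal sheaves $\mathcal{I}(h_{\min})$ and $\mathcal{I}(h_{\min}^{m+1})$ appearing here, that the minimal-singularities property really supplies the uniform boundedness $\sup_X|s|_{h_{\min}^m}<\infty$ for all $m$ at once, and that the product $s\cdot\xi$ genuinely lies in $H^q(X,\mathscr{O}_X(K_X\otimes L^{m+1})\otimes\mathcal{I}(h_{\min}^{m+1}))$. Once this bookkeeping is in place, the dimension count and the appeal to Theorem \ref{main result} are routine.
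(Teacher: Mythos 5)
Your argument is correct and is essentially the paper's own proof: assume a nonzero class, use Matsumura's injectivity theorem (the paper cites its packaged Corollary~3.3, whereas you re-derive the dimension inequality via the linear map $s\mapsto s\cdot\xi$, together with the minimal-singularity/analytic Zariski decomposition fact that all sections of $L^m$ are $h_{\min}^m$-bounded), and contrast the growth $h^0(X,L^m)\gtrsim m^{\kappa(L)-\varepsilon}$ with the bound $O(m^{n-q})$ from Theorem~\ref{main result} to reach a contradiction for $q>n-\kappa(L)$. The hypotheses you flag for care are exactly those of the paper's quoted Corollary~3.3 and are satisfied because $h_{\min}$ has algebraic singularities and $h_M=h_{\min}^m$, so no genuine gap remains.
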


To prove the above Theorem, we need the following Theorem which is a consequence of injectivity theorem.
\begin{thm}[{\cite[Corollary 3.3]{Mat14} }]\label{Cor 3.3}
Let $(L,h_L)$ and  $(M,h_M)$ be line bundles with singular metrics on a compact K\"{a}hler manifold $X$.
Assume the following conditions:
\begin{itemize}
\item There exists a subvariety $Z$ on $X$ such that $h_L$ and $h_M$ are smooth on $X\setminus Z$.
\item $\sqrt{-1}\Theta_{h_L}(L)\geq \gamma$ and $\sqrt{-1}\Theta_{h_M}(M)\geq \gamma$ for some smooth
$(1,1)$-form $\gamma$ on $X$.
\item $\sqrt{-1}\Theta_{h_L}(L)\geq 0$ on $X\setminus Z$.
\item $\sqrt{-1}\Theta_{h_L}(L)\geq \varepsilon \sqrt{-1}\Theta_{h_M}(M)$ on $X\setminus Z$ for some
positive number $\varepsilon>0$.
\end{itemize}
Assume that $h^q(X,\mathscr{O}_X(K_X\otimes L)\otimes \mathcal{I}(h_L))$ is not zero. Then we have
\begin{align*}
\dim H^0_{bdd,h_M}(X,M)\leq h^q(X,\mathscr{O}_X(K_X\otimes L\otimes M)\otimes \mathcal{I}(h_Lh_M)),
\end{align*}
where $H^0_{bdd,h_M}(X,M)$ is the space of sections of $M$ with bounded norm
\begin{align*}
H^0_{bdd,h_M}(X,M):=\{s\in H^0(X,M)|\sup_X|s|_{h_M}<+\infty\}.
\end{align*}
\end{thm}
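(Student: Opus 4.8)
The plan is to reduce the dimension inequality to a cohomology--injectivity statement and then prove the latter by $L^2$--Hodge theory. For a bounded holomorphic section $s\in H^0_{bdd,h_M}(X,M)$ the bound $|s|^2_{h_M}\leq C$ shows that tensoring with $s$ carries $K_X\otimes L\otimes\mathcal{I}(h_L)$ into $K_X\otimes L\otimes M\otimes\mathcal{I}(h_Lh_M)$ at the sheaf level, hence induces a linear map
\[
\Phi_s\colon H^q(X,\mathscr{O}_X(K_X\otimes L)\otimes\mathcal{I}(h_L))\longrightarrow H^q(X,\mathscr{O}_X(K_X\otimes L\otimes M)\otimes\mathcal{I}(h_Lh_M)).
\]
The key point is that $\Phi_s$ is injective whenever $s\not\equiv 0$. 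Granting this, fix $0\neq\beta\in H^q(X,\mathscr{O}_X(K_X\otimes L)\otimes\mathcal{I}(h_L))$, which exists by the standing hypothesis $h^q\neq 0$, and consider the linear map $\Psi(s)=\Phi_s(\beta)$ from $H^0_{bdd,h_M}(X,M)$ into $H^q(X,\mathscr{O}_X(K_X\otimes L\otimes M)\otimes\mathcal{I}(h_Lh_M))$. If $\Psi(s)=0$ with $s\neq 0$, then injectivity of $\Phi_s$ forces $\beta=0$, a contradiction; hence $\Psi$ is injective and $\dim H^0_{bdd,h_M}(X,M)\leq h^q(X,\mathscr{O}_X(K_X\otimes L\otimes M)\otimes\mathcal{I}(h_Lh_M))$, which is the assertion.

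For the injectivity of $\Phi_s$ I would argue in the spirit of Enoki and Ohsawa (this is precisely the argument behind the injectivity theorem of which the statement is a corollary). Since $Z$ is a proper analytic subset of the compact Kähler manifold $X$, the complement $X\setminus Z$ admits a complete Kähler metric; choose a family $\omega_\delta=\omega+\delta\,\widetilde{\omega}$ $(\delta>0)$ with $\widetilde{\omega}$ complete on $X\setminus Z$, on which $h_L,h_M$ are smooth with the stated curvature bounds. Because the sheaf of $L$--valued holomorphic $(n,0)$--forms that are locally $L^2$ for $h_L$ is precisely $K_X\otimes L\otimes\mathcal{I}(h_L)$, a de Rham--Weil argument on the $L^2$--$\overline{\partial}$--complex of $(X\setminus Z,\omega_\delta)$ identifies $H^q(X,\mathscr{O}_X(K_X\otimes L)\otimes\mathcal{I}(h_L))$ with the space of $L^2$ harmonic $(n,q)$--forms valued in $(L,h_L)$, and similarly for the target sheaf with $(L\otimes M,h_Lh_M)$; completeness of $\omega_\delta$ together with the global lower bounds $\sqrt{-1}\Theta_{h_L}(L),\sqrt{-1}\Theta_{h_M}(M)\geq\gamma$ supply the closed--range / Hodge decomposition needed here (followed by a diagonal limit $\delta\to 0$). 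Thus $\beta$ is represented by a harmonic $(n,q)$--form $u\not\equiv 0$ with $\overline{\partial}u=0$ and $\overline{\partial}^*_{h_L}u=0$.

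Applying the Bochner--Kodaira--Nakano identity (torsion--free, since $\omega_\delta$ is Kähler) to the $(n,q)$--form $u$ gives
\[
0=\|\overline{\partial}u\|^2+\|\overline{\partial}^*_{h_L}u\|^2\geq\int_{X\setminus Z}\langle[\sqrt{-1}\Theta_{h_L}(L),\Lambda_{\omega_\delta}]u,u\rangle\,dV_{\omega_\delta}\geq 0,
\]
using $\sqrt{-1}\Theta_{h_L}(L)\geq 0$ on $X\setminus Z$ and the nonnegativity of the curvature operator on $(n,q)$--forms; hence $[\sqrt{-1}\Theta_{h_L}(L),\Lambda]u=0$ almost everywhere, and a closer reading of the identity gives in addition that $u$ is annihilated by the $(1,0)$--part $D'_{h_L}$ of the Chern connection (Enoki's observation). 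The form $s\,u$ is then $\overline{\partial}$--closed ($\overline{\partial}s=0$), lies in $L^2$ for $h_Lh_M$ (boundedness of $|s|_{h_M}$), and represents $\Phi_s(\beta)$. If $\Phi_s(\beta)=0$, i.e. $s\,u=\overline{\partial}v$ with $v\in L^2$, then the holomorphy of $s$, the vanishing facts just established for $u$, and the comparison $\varepsilon\,\sqrt{-1}\Theta_{h_M}(M)\leq\sqrt{-1}\Theta_{h_L}(L)$ on $X\setminus Z$ — through the standard Enoki--Ohsawa computation carried out in \cite{Mat14} — imply that the $h_Lh_M$--curvature term of the Bochner identity still annihilates $s\,u$, so that $s\,u\perp\mathrm{Im}\,\overline{\partial}$ in $L^2$ and therefore $\|s\,u\|^2=\langle s\,u,\overline{\partial}v\rangle=0$, i.e. $s\,u\equiv 0$. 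Since $u\not\equiv 0$ on $X\setminus Z$ (unique continuation for the elliptic system it satisfies) and $s\not\equiv 0$ with $M$ a line bundle, this is impossible; hence $\Phi_s(\beta)\neq 0$, and since $\beta\neq 0$ was arbitrary, $\Phi_s$ is injective.

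The main obstacle is entirely contained in the last two paragraphs. First, one must run the de Rham--Weil identification of $H^q(X,\,\cdot\,\otimes\mathcal{I}(\cdot))$ with $L^2$ harmonic forms in the presence of the \emph{singular} metrics $h_L,h_M$; this is where the hypotheses that $Z$ is analytic with $h_L,h_M$ smooth off $Z$, that $\sqrt{-1}\Theta_{h_L}(L),\sqrt{-1}\Theta_{h_M}(M)\geq\gamma$ globally, and that $X$ is Kähler are all consumed, and it requires the usual completeness / closed--range bookkeeping on the noncompact manifold $X\setminus Z$ and a limiting argument in $\delta$. Second, one needs the Enoki--Ohsawa step transporting the vanishing of the curvature term and of $D'_{h_L}u$ from $u$ to $s\,u$: it is precisely the comparison $\varepsilon\sqrt{-1}\Theta_{h_M}(M)\leq\sqrt{-1}\Theta_{h_L}(L)$ that guarantees $[\sqrt{-1}\Theta_{h_Lh_M}(L\otimes M),\Lambda]\,s\,u=0$, hence that $s\,u$ cannot be a nonzero $\overline{\partial}$--exact form — this is exactly why that extra comparison hypothesis appears in the statement. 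Once these two analytic points are secured, the reduction in the first paragraph is purely formal.
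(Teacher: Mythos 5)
The paper does not actually prove this statement: it is quoted verbatim as \cite[Corollary 3.3]{Mat14}, so there is no internal argument to compare yours against. Your opening reduction is exactly how the corollary follows from Matsumura's injectivity theorem and is correct: for $s\neq 0$ the multiplication map $\Phi_s$ on $H^q(X,\mathscr{O}_X(K_X\otimes L)\otimes\mathcal{I}(h_L))$ is injective, and linearity of $s\mapsto\Phi_s(\beta)$ for a fixed nonzero class $\beta$ (which exists since $h^q\neq 0$) gives the dimension bound.

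The remaining two paragraphs, however, are only a sketch of the injectivity theorem itself, and that is where the genuine content lies. The de Rham--Weil identification of $H^q(X,\mathscr{O}_X(K_X\otimes L)\otimes\mathcal{I}(h_L))$ with $L^2$-harmonic $(n,q)$-forms on $X\setminus Z$ for the \emph{singular} metric, uniformly in the complete K\"ahler metrics $\omega_\delta$; the interchange of the limit $\delta\to 0$ with the harmonic projection (the harmonic representative $u=u_\delta$ depends on $\delta$, while the exactness $su=\overline{\partial}v$ is a statement about the $\delta$-independent cohomology, so uniform $L^2$ bounds and a careful limiting argument are required); and above all the Enoki-type step showing $\overline{\partial}^*_{h_Lh_M,\omega_\delta}(su)=0$, which is what legitimizes $\langle su,\overline{\partial}v\rangle=0$ --- these are precisely the points where the hypotheses (smoothness off $Z$, the global bound $\geq\gamma$, the comparison $\sqrt{-1}\Theta_{h_L}(L)\geq\varepsilon\sqrt{-1}\Theta_{h_M}(M)$) must actually be consumed, and your text passes over them by appealing to ``the standard Enoki--Ohsawa computation carried out in \cite{Mat14}''. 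So, read as a derivation of the corollary from the injectivity theorem of \cite{Mat14}, your argument is correct and coincides with the source; read as a self-contained proof, it has a genuine gap exactly at that analytic core. One smaller inaccuracy: for a harmonic $(n,q)$-form the K\"ahler Bochner--Kodaira identity forces the vanishing of $D'^{*}u$ (together with the curvature term), not of $D'u$ as written.
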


\begin{proof}[Proof of Theorem \ref{vanishing}]  Suppose to the contrary, we assume that
$h^q(X,\mathscr{O}_X(K_X\otimes L)\otimes \mathcal{I}(h_{\min}))$  for $q>n-k(L)$ is not zero.

Since $h_{\min}$ is of minimal singularities, it admits an analytic Zariski decomposition, which means that
\begin{align*}
h^0(X,L^{k-1})=h^0_{bdd,h^{k-1}_{\min}}(X,\mathscr{O}_X(L^{k-1}))
\leq h^q(X,\mathscr{O}_X(K_X\otimes L^k)\otimes\mathcal{I}(h^k_{\min})),
\end{align*}
where the equality follows from the property that $h_{\min}$ is a singular metric with minimal singularities
and the   inequality follows from Theorem \ref{Cor 3.3}. 

By the definition of Kodaira-Iitaka dimension $\kappa(L)$,
we have that
\begin{align*}
\limsup_{k\rightarrow +\infty}\frac{h^0(X,L^{k-1})}{(k-1)^{\kappa(L)}}>0.
\end{align*}

On the other hand, by Theorem \ref{main result}, we have
$h^q(X, \mathscr{O}_X(K_X\otimes L^k)\otimes\mathcal{I}(h^k_{\min}))=O(k^{n-q})$ as letting $k$ go to infity.
It is a contradiction to the inequality $>n-\kappa(L)$.
\end{proof}

\begin{rmk}
The injectivity theorem used in the proof of Theorem \ref{Cor 3.3} has been already proved for arbitrary singular metrics in \cite{Mat16}.
\end{rmk}

\begin{rmk}Theorem \ref{vanishing} is a generalization of Theorem 1.4 (1) in \cite{Mat14} from the case of
smooth projective manifold to compact K\"{a}hler manifold under the
assumption that the singular metric with minimal singularities on
$L$ is with algebraic singularities.
\end{rmk}

\begin{rmk}Metrics with minimal singularities do not always have algebraic singularities (see \cite{Mat14}
and reference therein).
\end{rmk}

 By the same argument as in the proof of Theorem \ref{vanishing}, we can obtain the following
\begin{thm}
Let $X$ be a compact K\"{a}hler manifold and $L$ be a holomorphic
line bundle with non-negative Kodaira-Iitaka dimension over $X$.
Suppose that $L$ is pseudo-effective and the Siu's metric $h_{siu}$
of $L$ is with algebraic singularities. Then we have that
\begin{align*}
H^q(X,\mathscr{O}_X(K_X\otimes L)\otimes\mathcal{I}(h_{siu}))=0  ~~~~~\mbox{~~~for~~~~}~~~q>n-\kappa(L).
\end{align*}
\end{thm}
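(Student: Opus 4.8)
The plan is to reproduce, essentially verbatim, the contradiction argument used for Theorem~\ref{vanishing}, replacing the metric of minimal singularities $h_{\min}$ by Siu's metric $h_{siu}$; the two places where a separate remark is needed are the verification that $h_{siu}$ (together with $h_{siu}^{k-1}$) satisfies the hypotheses of the injectivity input Theorem~\ref{Cor 3.3}, and the fact that $h_{siu}$ still has an analytic Zariski decomposition in the \emph{bounded}-section form.

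First I would argue by contradiction: assume $H^q(X,\mathscr{O}_X(K_X\otimes L)\otimes\mathcal{I}(h_{siu}))\neq 0$ for some $q>n-\kappa(L)$ (if $q>n$ the statement is trivial, so we may take $n-q\geq 0$). Let $Z$ be the singular locus of $h_{siu}$, which is an analytic subvariety because $h_{siu}$ has algebraic singularities. Fix an integer $k\geq 2$ and apply Theorem~\ref{Cor 3.3} to $(L,h_L):=(L,h_{siu})$ and $(M,h_M):=(L^{k-1},h_{siu}^{k-1})$, so that $L\otimes M=L^k$ and $h_Lh_M=h_{siu}^k$. Its four hypotheses hold: both metrics are smooth on $X\setminus Z$; $\sqrt{-1}\Theta_{h_{siu}}(L)\geq 0$ (the curvature current of $h_{siu}$ is semi-positive, since its local weight $\log\sum_m\varepsilon_m e^{\varphi_m}$ is plurisubharmonic as each $\varphi_m$ is) and $\sqrt{-1}\Theta_{h_{siu}^{k-1}}(L^{k-1})=(k-1)\sqrt{-1}\Theta_{h_{siu}}(L)\geq 0$, so one may take $\gamma=0$; and on $X\setminus Z$ one has $\sqrt{-1}\Theta_{h_{siu}}(L)=\tfrac{1}{k-1}\sqrt{-1}\Theta_{h_{siu}^{k-1}}(L^{k-1})$, which gives the last inequality with $\varepsilon=1/(k-1)>0$. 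Since by assumption $h^q(X,\mathscr{O}_X(K_X\otimes L)\otimes\mathcal{I}(h_{siu}))\neq 0$, Theorem~\ref{Cor 3.3} yields
\begin{align*}
\dim H^0_{bdd,h_{siu}^{k-1}}(X,L^{k-1})\leq h^q\big(X,\mathscr{O}_X(K_X\otimes L^k)\otimes\mathcal{I}(h_{siu}^k)\big).
\end{align*}

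Then I would identify the left-hand side with $h^0(X,L^{k-1})$: the local weight of $h_{siu}$ dominates $\log\varepsilon_{k-1}+\varphi_{k-1}$, and any $s\in H^0(X,L^{k-1})$ is a linear combination of the basis $\{s_j^{k-1}\}$ defining $\varphi_{k-1}$, so Cauchy--Schwarz gives $\sup_X|s|_{h_{siu}^{k-1}}<\infty$; this is precisely the analytic Zariski decomposition $H^0(X,L^{k-1})=H^0(X,L^{k-1}\otimes\mathcal{I}(h_{siu}^{k-1}))$ recalled before Theorem~\ref{vanishing}. Applying Theorem~\ref{main result} with $E=K_X$ (legitimate, as $L$ is pseudo-effective and $h_{siu}$ has algebraic singularities) gives $h^q(X,\mathscr{O}_X(K_X\otimes L^k)\otimes\mathcal{I}(h_{siu}^k))=O(k^{n-q})$. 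Combining these, $h^0(X,L^{k-1})=O(k^{n-q})$ for every $k$, hence $\kappa(L)=\limsup_{k\to\infty}\frac{\log h^0(X,L^k)}{\log k}\leq n-q$, contradicting $q>n-\kappa(L)$; this proves the vanishing. The argument presents no real obstacle beyond this bookkeeping, the single essential use of the hypothesis being that $h_{siu}$ has algebraic singularities so that both Theorem~\ref{main result} and Theorem~\ref{Cor 3.3} apply.
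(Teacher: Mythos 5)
Your proposal is correct and follows essentially the same route as the paper, which proves this statement by repeating the contradiction argument of Theorem \ref{vanishing} with $h_{\min}$ replaced by $h_{siu}$: apply Theorem \ref{Cor 3.3} to $(L,h_{siu})$ and $(L^{k-1},h_{siu}^{k-1})$, use the analytic Zariski decomposition of Siu's metric to identify the bounded sections with all of $H^0(X,L^{k-1})$, and contradict the $O(k^{n-q})$ bound from Theorem \ref{main result}. Your explicit verification of the hypotheses of Theorem \ref{Cor 3.3} and the Cauchy--Schwarz argument for boundedness are exactly the details the paper leaves implicit.
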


\end{document}